\documentclass[12pt]{article}
\usepackage{amsmath,amssymb}
\setlength{\textheight}{9.5in} \setlength{\textwidth}{6.2in}
\setlength{\topmargin}{-.5in} \setlength{\oddsidemargin}{0in}
\raggedbottom




\newtheorem{theorem}{Theorem}[section]

\newtheorem{lemma}[theorem]{Lemma}
\newtheorem{definition}[theorem]{Definition}


\newenvironment{proof}{\smallskip\par{\sc Proof.}\enspace}%
 {{\unskip\nobreak\hfil\penalty50\hskip2em
          \hbox{}\nobreak\hfil{\rule[-1pt]{5pt}{10pt}}
          \parfillskip=0pt\finalhyphendemerits=0
          \par\medskip}} 

\makeatletter
\def\section{\@startsection {section}{1}{\z@}{3.25ex plus 1ex minus
 .2ex}{1.5ex plus .2ex}{\large\bf}}
\def\subsection{\@startsection{subsection}{2}{\z@}{3.25ex plus 1ex minus
 .2ex}{1.5ex plus .2ex}{\normalsize\bf}}
\@addtoreset{equation}{section} 
\makeatother

\begin{document}

\begin{center}
\LARGE  Identification of the theory of orthogonal polynomials in
$d$--indeterminates with the theory of $3$--diagonal symmetric interacting
Fock spaces on $\mathbb C^d$
\end{center}

\vspace*{.3in}

\begin{center}
\sc

Luigi Accardi\\
 Centro Vito Volterra\\
 Universit\`{a} di Roma Tor Vergata\\
 Via di Tor Vergata, 00133 Roma, Italy\\
E-Mail: {\tt accardi@volterra.uniroma2.it}

\bigskip

Abdessatar Barhoumi\\
University of Carthage, Tunisia\\
Nabeul Preparatory Engineering Institute\\
Department of Mathematics\\
Campus Universitaire - Mrezgua - 8000 Nabeul\\
E-Mail: {\tt abdessatar.barhoumi@ipein.rnu.tn}

\bigskip

Ameur Dhahri\\
Department of Mathematics\\  Chungbuk National University\\
1 Chungdae-ro, Seowon-gu, Cheongju, Chungbuk 362-763, Korea\\
E-Mail: {\tt ameur@chungbuk.ac.kr}
\end{center}

\vspace*{.3in}

\begin{abstract}
The identification mentioned in the title allows a formulation
of the multidimensional Favard Lemma different from the ones
currently used in the literature and which parallels
the original $1$--dimensional formulation in the sense that
the positive Jacobi sequence is replaced by a sequence
of positive Hermitean (square) matrices and the real Jacobi
sequence by a sequence of positive definite kernels.
The above result opens the way to the program of a purely
algebraic classification of probability measures on
$\mathbb R^d$ with moments of any order and more generally of
states on the polynomial algebra on $\mathbb R^d$.\\
The quantum decomposition of classical real valued random
variables with all moments is one of the main ingredients
in the proof.
\end{abstract}

\bigskip
\noindent \textbf{Keywords:} Multidimensional orthogonal
polynomials; Favard theorem; Interacting Fock space;
Quantum decomposition of a classical random variable

\bigskip
\noindent\textbf{AMS Subject Classifications:} 42C05, 46L53.

\tableofcontents

\section{Introduction}

The theory of orthogonal polynomials is one of the classical
themes of calculus since almost two centuries and, in the
$1$--dimensional case, the large literature devoted to this
topic has been summarized in several well known monographs
(see for example \cite{st1943}, \cite{sz1975}, \cite{Chiha78},
\cite{[IsmAsk84]}).
In this case, even if at analytical level many deep
problems remain open, at the algebraic level the situation
is well understood and described by Favard Lemma which, to any
probability measure $\mu$ on the real line
with finite moments of any order, associates
two sequences, called the Jacobi sequences of $\mu$,
\begin{equation}\label{Jac-seq1}
\{(\omega_n)_{n \in \mathbb{N}}, \; (\alpha_n)_{n \in \mathbb{N}}\}\, ,\qquad
\omega_n\in\mathbb R_+ , \;  \alpha_n\in\mathbb{R},
\qquad n=0, 1, 2, \cdots
\end{equation}
subjected to the only constraint that, for any $n,\, k\in\mathbb N$,
\begin{equation}\label{Jac-seq1-constr}
\omega_n=0 \: \Longrightarrow  \: \omega_{n+k}=0
\end{equation}
Conversely, given two such sequences, it gives an inductive
way to uniquely reconstruct:
\begin{description}
\item[(i)] a state on the algebra $\mathcal P$ of polynomials
in one indeterminate (see subsection \ref{St-P}),
\item[(ii)] the orthogonal decomposition of $\mathcal P$
canonically associated to this state.
\end{description}
In this sense one can say that the pair of sequences
(\ref{Jac-seq1}), subjected to the only constraint
(\ref{Jac-seq1-constr}), constitutes a full set of algebraic
invariants for the equivalence classes of probability measures
on the real line with respect to the equivalence relation
$ \mu\sim\nu$  if and only if all moments of $\mu$ and $\nu$
are finite and coincide (moment equivalence of probability
measures on $\mathbb R$).\\
Compared to the $1$--dimensional case the literature
available in the multi-dimensional case is definitively scarse,
even if several publications (see e.g \cite{[DuXu01]},
\cite{[Koor90]}, \cite{[Macdon98]}, \cite{[MarcVanAs06]}) show
an increasing interest to the problem in the past years,
and for several years it has been mainly confined to
applied journals, where it emerges in connection with
different kinds of approximation problems.
The need for an insightful theory was soon perceived by
the mathematical community, for example in the 1953 monograph
\cite{[ErMagOberTric53]} (cited in \cite{[Xu97a]}), the authors
claim that '' $\dots $ there does not seem to be an extensive
general theory of orthogonal polynomials in several
variables $\dots $ ''.\\
Several progresses followed, both on the analytical front
concerning multi--dimensional extensions of
Carleman's criteria \cite{Nussb66}, \cite{[Xu93]}, and
on the algebraic front, with the introduction of the matrix
approach \cite{[KrShe67]} and the early formulations of
the multi--dimensional Favard lemma \cite{[Kowa82a]},
\cite{[Kowa82b]}, \cite{[Xu97a]}.\\
However, even with these progresses in view, one cannot
yet speak of a ''general theory of orthogonal polynomials
in several variables''. In fact the importance of Favard Lemma
consists in the fact that the pair
$(\alpha_n \ , \ \omega_n)$ condensates the {\bf minimal information}
gained from the knowledge of the $n$--th moment with respect to the
knowledge of all the $k$--th moments with $k\le n-1$.
Here the word {\bf minimal} is essential: \\
it is exactly this {\bf minimality} that was missing in all the
numerous approaches to the multi--dimensional Favard Lemma
until a couple of years ago. \\
The more recent multi--dimensional formulations of Favard Lemma are based on two sequences of
matrices, one of which rectangular, with quadratic constraints among the elements of these
sequences (see \cite{[AcNh02]}, \cite{[AcKuoSt04b]} and \cite{[Xu04]}, where (see Theorem 2.4)
the commutation relations in \cite{[AcNh02]}, \cite{[AcKuoSt04b]}
are expressed in terms of a fixed basis of orthogonal polynomials).
As mentioned in \cite{[Xu04]} such formulations
look far from the elegant simplicity of the $1$--dimensional Favard lemma. \\

\noindent Since the multi--dimensional analogues of positive (resp. real) numbers
are the positive definite (resp. Hermitean) matrices,
one would intuitively expect that a multi--dimensional
extension of the Favard lemma would replace the sequence
$(\omega_n)$ by a sequence of positive definite
matrices $(\tilde \Omega_n)$ and the $(\alpha_n)$--sequence by a sequence of
Hermitean matrices $(a^0_{j,n})$ for each coordinate function $X_j$ on $\mathbb R^d$.
The precise formulation of this naive conjecture is what we call {\it the
multi--dimensional Favard problem} (see section \ref{multi-dim-Fav-probl}). \\

\noindent The main result of the present paper is the proof that {\bf the above
mentioned conjecture is correct}. The {\bf new feature, specific of the
multi--dimensional case}, is that the two sequences $(\tilde \Omega_n)$ and
$(a^0_{j,n})_j$ must be constructed recursively, because the choice of
$\tilde \Omega_{n+1}$ and $a^0_{j,n+1}$ ($j=1,\dots, d$) is constrained by the
choices of the previous pairs.\\
The determination of these constraints, and their recursive formulation,
is based on several new results
and notions that are of independent interest. In particular:\\
  (1) The identification of the theory of orthogonal polynomials
with respect to a state on the algebra of polynomial
functions on $\mathbb R^d$ with the theory of {\bf symmetric}
interacting Fock spaces over $\mathbb C^d$ with a $3$--diagonal structure
(see section \ref{3-diag-dec-P} and the Appendix \ref{App-IFS} on interacting Fock spaces).\\
(2) The explicit form of the {above mentioned minimal set of constraints.\\

\noindent The reconstruction theorem (Theorem \ref{3diag-equiv-new-Favrd}) then shows
that the $d$--dimensional analogue of the principal Jacobi sequence is given by the sequence of
{\bf the real parts} $(\tilde\Omega_{R,n+1})$ of the positive--definite
kernels (block matrices) $(\tilde\Omega_{n+1})$, defining the scalar product
on the space of orthogonal
polynomials of order $n+1$ in terms of the scalar product on the space
of order $n$. In fact, once given this scalar product, $\tilde\Omega_{R,n+1}$ is an
arbitrary kernel, positive--definite with respect to it. The imaginary part
of $\tilde\Omega_{n+1}$, on the contrary, is uniquely fixed by the commutation relations
and by the $n$--th terms of the secondary Jacobi sequence: $a^0_{j,n+1}$ ($j=1,\dots, d$).
The $d$--dimensional
analogue of condition (\ref{Jac-seq1-constr}) consists in the statement that
these kernels map zero--norm vectors into zero--norm vectors. In particular,
if the $n$--th kernel is identically zero, then the $n$--th space of the gradation
consists only of zero vectors, hence the same will be true for all the $N$--th spaces
with $N\ge n$.\\
The $d$--dimensional analogue of the secondary Jacobi sequence is
given by $d$ sequences of symmetric matrices. These are not arbitrary,
but have to satisfy an inductive system of {\bf linear equations}.
The fact that this system always admits the zero--solution,
corresponding to symmetric states on the polynomial algebra, shows that, in
analogy with the one dimensional case, every class of states on the polynomial algebra in $d$
real variables, for the equivalence relation of having the same sequence of
scalar products on the gradation spaces, contains exactly one symmetric measure.\\

\noindent
The proof of all the above results heavily relies,
on the quantum probabilistic approach to the theory of
orthogonal polynomials first proposed, in the $1$--dimensional
case, in the paper \cite{[AcBo98]}, where the notion
of quantum decomposition of a classical random variable
was introduced and used to establish a canonical
identification between the theory of orthogonal polynomials
in $1$ indeterminate and the theory of $1$--mode interacting
Fock spaces (IFS). One can say that the quantum decomposition
of a classical random variable is a re--formulation of the
Jacobi recurrence relation.\\
The early extensions of this approach to the multi--dimensional
case \cite{[AcNh02]}, \cite{[AcKuoSt04b]} constructed
the quantum decomposition of the coordinate random varibles
in terms of creation, annihilation and preservation operators
on an IFS canonically associated to the orthogonal
decomposition of the polynomial algebra in $d$
indeterminates $\mathcal P_d$ with respect to a given state,
however, as mentioned above, for the Favard Lemma they
used rectangular matrices and quadratic relations. This made explicit
construction of the solutions a difficult problem.
An important step towards the solution of this problem was
done in the paper \cite{[AcKuoSta07]} where it was proved
that the reconstruction of the state on $\mathcal P_d$ can
be achieved using only the commutators between creation
and annihilation operators and the preservation operator.
These operators preserve the orthogonal gradation,
therefore each of them is determined by a sequence of
square matrices. Moreover the preservation operator, being
symmetric, is determined by a sequence of Hermitean matrices
while the commutators between creation and annihilation
operators are determined by two positive definite matrix
valued kernels, respectively $(a_ja^+_k)$ and $(a^+_ka_j)$
($\: j,k\in \{1,\dots ,d\}$).\\
Although this framework was much nearer to the one conjectured
in the Favard problem, yet important discrepancies remained,
in particular:
\begin{description}
  \item[(i)] While the sequence of Hermitean matrices is only
one for each coordinate random variable, as conjectured, the
commutators involved are defined by two sequences of positive
definite matrix valued kernels, namely the restrictions, to the gradation spaces,
of $(a_ja^+_k)$ and $(a^+_ka_j)$ ($\: j,k\in \{1,\dots ,d\}$).
  \item[(ii)] Contrarily to the $1$--dimensional case, the
correspondence between families of orthogonal polynomials in $d$ variables and
IFS over $\mathbb C^d$ is not one--to--one.
  \item[(iii)] The multi--dimensional analogue of the compatibility
	condition (\ref{Jac-seq1-constr}) remained obscure.
  \item[(iv)] 	The ''minimality condition'' mentioned above was not
	respected (this fact will be clear from the present paper).
\end{description}
These problems have been settled in the present paper: (i) and (iv) because
the sequence defined by the $(a^+_ka_j)$ is inductively determined, while
the sequence defined by the $(a_ja^+_k)$, i.e. $\tilde\Omega_{n+1}(j,k)$,
has an arbitrary real part; (ii) because the correct one--to--one correspondence
is with {\bf symmetric} IFS $3$--diagonal interacting
Fock spaces over $\mathbb C^d$; (iii) for the reasons explained above.\\

\noindent
In the present approach the emergence of the symmetric tensor
algebra as well as of nontrivial commutation relations
are both consequences of the commutativity of the
coordinate random variables.\\ In this sense {\bf a non commutative structure
is canonically deduced from a commutative one}.\\
From the point of view of physics, this clearly shows the probabilistic origins of
the Heisenberg commutation relations, which have been shown to characterize Gaussian measures
(see \cite{[AcKuoSta07]}). For classes different from the Gaussian one, we obtain a
powerful generalization of the whole mathematical structure of quantum theory
that, in its infinite dimensional version, corresponds to an extension of quantum field theory.
Thus the traditional theory of orthogonal polynomials merges with the program of
nonlinear first and second quantization and provides new tools for it.\\

{\bf Acknowledgements}
The authors are grateful to Hyun Jae Yoo for pointing out an error in
the previous version of the present paper. They are also grateful to
Abdallah Dhahri for many interesting discussions and remarks leading to
several improvements in the exposition.\\
LA acknowledges support by the RSF grant 14-11-00687,
Steklov Mathematical Institute.

\section{ The polynomial algebra }

\subsection{Notations }

Throughout the present paper, for any $m\in\mathbb N$,
$\mathbb{C}^m$ (resp. $\mathbb{R}^m$) will denote the $m$--dimensional
complex (resp. real) vector space referred to the canonical basis
denoted in both cases ($e_j$) $(j\in \{1,\dots,m\})$ and the term
{\it coordinates} will be referred to this basis. Unless otherwise specified,
algebras and vector spaces will be complex.
Let $D:=\{1,\dots,d\}$ ($d\in\mathbb N$) be a finite set and denote
\begin{equation}\label{notat-P}
\mathcal P :=\mathcal P_D := {\mathbb C}[(X_j)_{j\in D}]
\end{equation}
the complex polynomial algebra in the commuting indeterminates
$(X_j)_{j\in D}$ with the\\ $*$--algebra structure uniquely
determined by the prescription that the $X_j$ are self-adjoint.
The principle of identity of polynomial states that a
polynomial is identically zero if and only if all its
coefficients are zero. This is equivalent to say that
the generators $X_j$ ($j\in D$) are algebraically
independent. These generators will also be called {\it coordinates}.\\
By definition $\mathcal P$ has an identity,
denoted $1_{\mathcal P}$, and
\begin{equation}\label{Xj0=1P}
X_j^{0} =1_{\mathcal{P}}   \: , \qquad \forall \, j\in D,
\end{equation}
where $1_{\mathcal{P}}$ denotes the identity of $\mathcal{P}$.\\
For any vector space $V$ we denote $\mathcal{L}(V)$
the algebra of linear maps of $V$ into itself.\\
For $F=\{1,\dots,m\}\subseteq D$ and
$v=(v_1,\dots,v_m)\in \mathbb{R}^m$ we will use the notation:
$$
X_v:=\sum_{j\in F}v_jX_j .
$$
The coordinates $X_j\;\;(j\in D)$ define a linear map
$$
X: v=\sum_{j\in D}v_je_j\in\mathbb{R}^d \longmapsto X_v:=\sum_{j\in D}v_jX_j\in \mathcal{L}(\mathcal{P}).
$$
The real linear span $\mathcal P_{\mathbb R}$ of the generators
$X_j$ induces a natural real structure on $\mathcal P$ given by
\begin{equation}\label{real-struct-P}
\mathcal P = \mathcal P_{\mathbb R} \dot+ i\mathcal P_{\mathbb R}
\end{equation}
where, here and in the following, $\dot+$ in (\ref{real-struct-P})
means direct sum in the real vector space sense.
All the properties considered in this section
continue to hold if one restricts one's attention to the
real algebra $\mathcal{P}_\mathbb{R}$.\\

With the convention (\ref{Xj0=1P}) a \textit{monomial of degree}
$n\in \mathbb N$ is by definition any product of the form
\begin{equation}\label{df-mon}
M := \prod_{j\in F} X_j^{n_j}
\end{equation}
where $F\subseteq D$ is a finite subset, and for any $j \in F$, $\, n_j \in \mathbb{N}$
$$
\sum_{j\in F}n_j = n.
$$
The monomial (\ref{df-mon}) is said to
be \textit{localized in the subset} $F\subseteq D$.\\
The algebra generated by such monomials will be denoted
$$
\mathcal P_F\subseteq \mathcal P:=\mathcal{P}_D .
$$
Notice that, with this definition of localization, if $F\subseteq G\subseteq D$
then any monomial localized in $F$ is also localized in $G$, i.e.
$$
\mathcal P_F\subseteq \mathcal P_G\subseteq \mathcal P .
$$
For all $n \in \mathbb N$ and for any subset $F\subseteq D$,
we use the following notations:
\begin{equation}\label{df-mon-n]}
\mathcal{M}_{F,n]} := \textrm{the set of monomials of degree
less or equal than} \: n \: \textrm{localized in} \: F \qquad\qquad\qquad\qquad
\end{equation}
\begin{equation}\label{df-mon-n}
\mathcal{M}_{F,n} := \textrm{the set of monomials of degree}\:
n \: \textrm{localized in} \: F\qquad\qquad\qquad\qquad\qquad\qquad\qquad\qquad
\end{equation}
\begin{equation}\label{df-PFn]}
\mathcal P_{F,n]} := \textrm{the vector sub--space of}\:
\mathcal{P} \: \textrm{generated by the set} \: \mathcal{M}_{F,n]}
\quad\quad\qquad\qquad\qquad\qquad\qquad\qquad\qquad
\end{equation}
\begin{equation}\label{df-PFn}
\mathcal P_{F,n}^{0} := \textrm{the vector sub--space of} \: \mathcal{P} \:
\textrm{generated by the set} \: \mathcal{M}_{F,n} \}\qquad\qquad\qquad\qquad\qquad\qquad\qquad\qquad\qquad\qquad\qquad\qquad\qquad
\end{equation}
We use the apex $0$ in $\mathcal P_{F,n}^{0}$ to distinguish
the monomial gradation (see (\ref{1.5a}) below), which is
purely algebraic, from the orthogonal gradations, which
will be introduced later on and depend on the choice of
a state on $\mathcal{P}$.
The only monomial of degree $n=0$ is by definition
$$
M_0 := 1_{\mathcal P} .
$$
Therefore
\begin{equation}\label{P0]]}
\mathcal P^0_{F,0} =\mathcal P_{F,0]} = \mathbb C\cdot 1_{\mathcal P}.
\end{equation}
More generally, if $|F| =m$ then for any
$n\in\mathbb N$ there are exactly
\begin{eqnarray}\label{card-D-mon-deg-n}
d_n := \left(
  \begin{array}{c}
    n + m - 1 \\
     m - 1
  \end{array}
\right)
\end{eqnarray}
monomials of degree $n$ localized in $F$ and, by the
principle of identity of polynomials they are
linearly independent. Therefore one has
\begin{equation}\label{dim-Pn}
\mathcal P^0_{F,n} \equiv \mathbb C^{d_n}
\end{equation}
where the isomorphism is meant in the sense of vector spaces.\\
For future use it is useful to think of  $\mathcal P$ as an
algebra of operators acting on itself by left multiplication.
In the following, when no confusion is possible, we will use
the same symbol for an element $Q\in \mathcal P$ and for its
multiplicative action on $\mathcal P$. Sometimes, to emphasize
the fact that $Q$ is considered as an element of the vector
space $\mathcal P$, we will use the notation
$$
Q\cdot 1_{\mathcal P}=:Q\cdot \Phi_0 .
$$
The sequence $(\mathcal P_{F,n]})_{n \in \mathbb{N}}$ is
an increasing filtration of complex finite dimensional
$*$--vector sub--spaces of $\mathcal P$, i.e:
\begin{equation}\label{inclusions}
\mathcal P_{F,0]} \subset \mathcal P_{F,1]} \subset\mathcal P_{F,2]}
\subset \cdots \subset \mathcal P_{F,n]}\subset \cdots
\subset \mathcal P_F\subset \mathcal P .
\end{equation}
Moreover
\begin{equation}\label{union-Pn=P}
\bigcup_{n\in\mathbb N}\mathcal P_{F,n]} = \mathcal P_F
\end{equation}
and, for any $m,\, n \in\mathbb N$
one has
$$
\mathcal P_{F,m]}\cdot \mathcal P_{F,n]} = \mathcal P_{F,m+n]} .
$$
The sequence $(\mathcal P_{F,n}^{0})_{n \in \mathbb{N}}$
defines a vector space gradation of $\mathcal P_{F}$
\begin{equation}\label{1.5a}
\mathcal P_{F}
=\sum^{\centerdot}_{k\in \mathbb{N}} \mathcal P_{F,k}^{0}
\end{equation}
called the monomial decomposition of $\mathcal P$.
In (\ref{1.5a}) the symbol $\displaystyle{\sum^{\centerdot}}$
denotes direct sum in the sense of vector spaces, i.e.
elements of $\mathcal P $ are finite linear sums of elements
in some of the $\mathcal P^{0}_{F,n}$ and
\begin{equation}\label{0-int-P0n}
m\neq n \: \Longrightarrow \:
\mathcal P_{F,m}^{0}\cap\mathcal P_{F,n}^{0}= \{0\}.
\end{equation}
The gradation (\ref{1.5a}) is compatible with the
filtration $(\mathcal P_{F,n]})$ in the sense that,
for any $n \in \mathbb{N}$,
\begin{equation}\label{vect-sp-dir-sum}
\mathcal P_{F,n]} = \sum^{\centerdot}_{k \in \{0,1,\cdots,n \}}
\mathcal P_{F,k}^{0}.
\end{equation}
In particular
$$
\mathcal P_{F}= \mathcal P_{F,n]} \, \dot + \, \Big(\sum^{\centerdot}_{k>n} \mathcal P_{F,k}^{0}\Big)
\: , \qquad\forall \, n\in\mathbb N .
$$
\begin{lemma}\label{vect-sub-sp-em}{\rm
(i) For any vector sub--space $W\subset \mathcal P_{F}$, the set
\begin{equation}\label{XvW}
XW := \{X_vW \ : \ v\in{\mathbb C}^{F}\}
\end{equation}
is a vector sub--space of ${\mathcal P}_{F}$, where $\mathbb {C}^{F}:=\{ v \in \mathbb{C}^{m}:\: v_{j}=0 \: \textrm{if} \: j \notin F \}$.\\
(ii) For each $n\in\mathbb N$, one has
\begin{equation}\label{XPFn0=PFn+10]}
X\mathcal P_{F,n}^{0} = \mathcal P_{F,n+1}^{0}
\end{equation}
\begin{equation}\label{PFn+1]=XjPFn]=P0]}
\mathcal P_{F,n+1]} =X\mathcal P_{F,n]} \, \dot + \, \mathcal P_{F,0]}
= \mathcal P_{F,n]} \, \dot + \, \mathcal P_{F,n+1}^{0}.
\end{equation}
(iii) For $n\in\mathbb N$, let $\mathcal P_{n+1}$ be a vector
sub--space of $\mathcal P_{n+1]}$ such that
\begin{eqnarray}\label{1.9a}
\mathcal P_{n]} \, \dot + \, \mathcal P_{n+1}
=\mathcal P_{n+1]}.
\end{eqnarray}
Then as a vector space $\mathcal P_{n+1}$ is isomorphic
to $\mathcal P^{0}_{n+1}$.
}\end{lemma}

\begin{proof}
(i) The set (\ref{XvW}) coincides with the set
$$
\Big\{\sum_{j\in F}X_j\xi^{(j)}_w \ : \ \xi^{(j)}_w\in W, \:
\forall \, j\in F\Big\}
$$
and this is clearly a vector space.\\
(ii) Since $\mathcal{M}_{F,n}$ is a linear basis of $\mathcal P_{F,n}^{0}$, $\bigcup_{j\in F}X_j
\mathcal{M}_{F,n}\subset \mathcal P_{F,n+1}^{0}$
is a system of generators of the sub--space
$X\mathcal P_{F,n}^{0}$. Hence
$X\mathcal P_{F,n}^{0}\subset \mathcal P_{F,n+1}^{0}$.
The converse inclusion is clear because
$\bigcup_{j\in F}X_j\mathcal{M}_{F,n}$ is also a system of
generators of $\mathcal P_{F,n+1}^{0}$.
This proves (\ref{XPFn0=PFn+10]}). (\ref{PFn+1]=XjPFn]=P0]})
follows from (\ref{vect-sp-dir-sum}) and (\ref{XPFn0=PFn+10]}).\\
(iii) Since the sum in (\ref{1.9a}) is direct and the spaces are
finite dimensional, one has
$$
\textrm{dim}(\mathcal P^{0}_{n+1})
=\textrm{dim}(\mathcal P_{n+1]})-\textrm{dim}(\mathcal P_{n]})
=\textrm{dim}\left(\mathcal P_{n+1}\right).
$$
\end{proof}

\subsection{$\mathcal P$ and the symmetric tensor algebra over ${\mathbb C}^d$}

In the present paper the number $d\in\mathbb N^*:= \mathbb N\setminus \{0\}$
will be fixed and
$$
D\equiv \{1,\cdots ,d\}
$$
in the following the index $D$ will be omitted and we will use the notations:
\begin{eqnarray*}
\mathcal{P}_D=\mathcal{P}\; , \qquad
\mathcal P^0_{n}:=\mathcal P^0_{D,n}\; , \qquad
\mathcal P_{n]}:=\mathcal P_{D,n]}\; , \qquad n\in\mathbb{N}
\end{eqnarray*}
with the convention $$
\mathcal{P}_{-1}^0=\mathcal{P}_{-1]}=\{0\}.
$$
The natural real structure on $\mathbb C$ given by
$\mathbb C=\mathbb R  \dot + i \mathbb R$ induces a real structure on
$\mathbb C^d=\mathbb R^d  \dot + i \mathbb R^d$
the associated (componentwise) involution given by complex conjugation:
\begin{equation}\label{inv-Cd}
(u+iv)^*:=u-iv \; , \qquad
u+iv\in \mathbb C^d \ := \ \mathbb R^d \ \dot+ \ i\mathbb R^d .
\end{equation}
In the following we fix the choice $V:=\mathbb C^d$ and we denote
$(e_j)_{j\in D}$ the canonical basis of $\mathbb{C}^d$ which is a
{\bf real basis}, i.e. a basis of $\mathbb{R}^d\subset\mathbb{C}^d$.
$\otimes$ will denote algebraic tensor product
and $\widehat{\otimes}$ its symmetrization. The tensor algebra over
${\mathbb C}^d$ is the vector space
$$
\hbox{Tens}({\mathbb C}^d):=
\sum^{\centerdot}_{n \in \mathbb{N}}({\mathbb C}^d)^{\otimes n}
$$
with multiplication given by
$$
(u_{n}\otimes\cdots\otimes u_{1})\otimes
(v_{m}\otimes\cdots\otimes v_{1})
:=u_{n}\otimes\cdots\otimes u_{1}\otimes
v_{m}\otimes\cdots\otimes v_{1}
$$
for any $m,\: n\in{\mathbb N}$ and all $u_{j},\: v_{j}\in\mathbb C^d.$
The extension to $\mathbb C^d$ of the natural real structure
on $\mathbb C$ given by $\mathbb C=\mathbb R  + i \mathbb R$
and the associated involution, induces a $*$--algebra structure
on $\mathcal{T}({\mathbb C}^d)$ whose involution is
characterized by the property that
\begin{equation}\label{df-sym-invol}
(v_{n}\otimes\cdots\otimes v_{1})^*
:=v^*_{n}\otimes\cdots\otimes v^*_{1}
 \: , \qquad  \forall \, n\in{\mathbb N} \, , \;
\forall \, v\in\mathbb C^d .
\end{equation}
For $n\in\mathbb N^*$, the $*$--sub--space of
$({\mathbb C}^d)^{\otimes n}$ generated by the elements of the form
\begin{equation}\label{df-sym-elem}
v^{\otimes n} := v\otimes\cdots\otimes v \quad
(n\hbox{--times})  \: , \qquad
\forall \, n\in{\mathbb N} \, , \; \forall \, v\in\mathbb C^d .
\end{equation}
is called {\bf the symmetric tensor product of $n$--copies of} $\mathbb C^d$
and denoted by $({\mathbb C}^d)^{\widehat{\otimes} n}$.
$({\mathbb C}^d)^{\widehat \otimes n}$ coincides with the fixed point sub--space
of the linear action, on $({\mathbb C}^d)^{\otimes n}$, of the $n$--th
order permutation group $\mathcal S_{n}$ given by
$$
\widehat{\sigma}\left(v_n\otimes v_{n-1}\otimes\dots\otimes v_1\right)
:= v_{\sigma_{n}}\otimes v_{\sigma_{n-1}}\otimes\dots\otimes v_{\sigma_1}
\, , \quad
v_n\otimes v_{n-1}\otimes\dots\otimes v_1\in(\mathbb{C}^d)^{\otimes n}
\ , \ \sigma\in\mathcal S_{n} .
$$
By definition:
$$
({\mathbb C}^d)^{\widehat \otimes 0} := \mathbb C
$$
$$
\hbox{Tens}_{sym}({\mathbb C}^d)
:= \sum^{\centerdot}_{n \in \mathbb{N}}({\mathbb C}^d)^{\widehat{\otimes} n} .
$$
$\hbox{Tens}_{sym}({\mathbb C}^d)$ is the graded abelian $*$--sub--algebra of
$\hbox{Tens}({\mathbb C}^d)$ generated by the elements of the form
(\ref{df-sym-elem}) and is called {\bf the symmetric tensor algebra
over ${\mathbb C}^d$}. \\
The following Lemma reformulates some known results in a language and
with the notations that will be used later.

\begin{lemma}\label{lem1.4}{\rm
Let $(e_j)_{j\in D}$ be the canonical linear basis of ${\mathbb C}^d$.
The map
\begin{equation}\label{df-iso-S0n1}
e_{j} \longmapsto X_{j} \ , \ j\in D
\: , \qquad  1_{\mathcal{T}_{sym}({\mathbb C}^d)} \mapsto 1_{\mathcal{P}}
\end{equation}
extends uniquely to a is a gradation preserving isomorphism
of commutative $*$--algebras:
\begin{equation}\label{df-iso-S0}
S^{0}:=\sum^{\centerdot}_{n \in \mathbb{N}} S^{0}_n \ : \
\hbox{Tens}_{sym}({\mathbb C}^d):=
\sum^{\centerdot}_{n \in \mathbb{N}}({\mathbb C}^d)^{\widehat{\otimes}n}
\ \rightarrow \
\sum^{\centerdot}_{n \in \mathbb{N}}\mathcal P^{0}_n\equiv\mathcal P .
\end{equation}
In particular for all $n\in{\mathbb N}^{*}$ and for all maps
$j \ : \ \{1,\dots ,n\} \ \to \ \{1,\dots ,d\}$:
\begin{equation}\label{df-iso-S0n2}
e_{j_n}\widehat{\otimes}\cdots\widehat{\otimes} e_{j_1}
\longmapsto X_{j_n}\cdots X_{j_1}
\end{equation}
and, in the notations of section (\ref{Ex-FFS}) below:
\begin{equation}\label{Ex-FFS0}
e_{j}\widehat{\otimes} (\ \cdot \ )  = \ell^*_{e_{j}}= X_{j} .
\end{equation}
}\end{lemma}

\begin{proof} The thesis follows from the fact that the $e_j$'s
(resp. $X_j$'s) ($j\in D$) are algebraically independent (i.e. the
terms appearing in (\ref{df-iso-S0n2}) and the correpsonding identities
are linearly independent) self--adjoint
generators of the commutative $*$--algebra $\mathcal{T}_{sym}({\mathbb C}^d)$
(resp. $\mathcal{P}$) and that the correspondence (\ref{df-iso-S0n1})
is $1$--to--$1$.
\end{proof}

{\bf Remark}. In analogy with the identification of $X_{j}$ with its
action as multiplication operator on $\mathcal P$, $e_j$ can be identified
with the symmetric tensor multiplication by $e_j$. If confusion may arise, we
use the notation
$$
\hat M_{e_j}(e_{j_n}\widehat{\otimes}\cdots\widehat{\otimes} e_{j_1})
:= e_{j}\widehat{\otimes}e_{j_n}\widehat{\otimes}\cdots\widehat{\otimes} e_{j_1}.
$$
With this notation and the corresponding one for the $X_{j}$'s, one has
\begin{equation}\label{intertw-Mej-Xj}
S^{0}\hat M_{e_j}(S^{0})^{-1} = M_{X_{j}} \: , \qquad j\in D .
\end{equation}

\begin{lemma}\label{lemm}{\rm
Let $(\mathcal P_n)_{n\in{\mathbb N}}$ be any family of
sub--spaces of $\mathcal P$ such that
$$
\mathcal P_{k+1]}=\mathcal P_{k]}\, \dot + \, \mathcal P_{k+1}
\, ,\qquad\forall \, k\in{\mathbb N},
$$
$$
\mathcal{P}_{0}=\mathcal P_{0]}=\mathcal P^{0}_0={\mathbb C}1_{\mathcal P}.
$$
Then, for all $n\in{\mathbb N}$, there exists a vector
space isomorphism
\begin{equation}\label{df-iso-Sn}
S_n \ : \ ({\mathbb C}^d)^{\widehat{\otimes}n}
\rightarrow\mathcal P_n
\end{equation}
and the map
\begin{equation}\label{df-iso-S}
S:=\sum^{\centerdot}_{n \in \mathbb{N}}S_n \ : \
\hbox{Tens}_{sym}({\mathbb C}^d):=
\sum^{\centerdot}_{n \in \mathbb{N}}({\mathbb C}^d)^{\widehat{\otimes}n}
\ \rightarrow \
\sum^{\centerdot}_{n \in \mathbb{N}}\mathcal P_n\equiv\mathcal P
\end{equation}
is a gradation preserving vector space isomorphism.
}\end{lemma}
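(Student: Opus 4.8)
The plan is to reduce everything to Lemma~\ref{lem1.4} by comparing the given decomposition $(\mathcal{P}_n)$ with the canonical monomial one $(\mathcal{P}^0_n)$. First I would note that the hypothesis $\mathcal{P}_{k+1]}=\mathcal{P}_{k]}\dot+\mathcal{P}_{k+1}$ together with the second identity in \eqref{PFn+1]=XjPFn]=P0]} (read with $F=D$) exhibits both $\mathcal{P}_n$ and $\mathcal{P}^0_n$ as direct complements of $\mathcal{P}_{n-1]}$ inside $\mathcal{P}_{n]}$. Consequently the projection $E_n$ of $\mathcal{P}_{n]}$ onto $\mathcal{P}^0_n$ with kernel $\mathcal{P}_{n-1]}$ restricts to an injective linear map on $\mathcal{P}_n$, and since $\dim\mathcal{P}_n=\dim\mathcal{P}^0_n$ by Lemma~\ref{dim-Pn-const}, $E_n|_{\mathcal{P}_n}:\mathcal{P}_n\to\mathcal{P}^0_n$ is a linear isomorphism. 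I would then define $S_n:=(E_n|_{\mathcal{P}_n})^{-1}\circ S^0_n:({\mathbb C}^d)^{\widehat{\otimes}n}\to\mathcal{P}_n$, a composition of isomorphisms, which is the required vector space isomorphism (the case $n=0$ being $S^0_0$ itself).

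Next I would assemble the $S_n$ into $S=\sum^{\centerdot}_{n}S_n$ and check that it is a graded linear bijection onto $\mathcal{P}$. Iterating the first hypothesis and using $\mathcal{P}_0=\mathbb{C}1_{\mathcal{P}}$ gives $\mathcal{P}_{n]}=\mathcal{P}_0\dot+\cdots\dot+\mathcal{P}_n$, while \eqref{union-Pn=P} (with $F=D$) yields $\mathcal{P}=\sum^{\centerdot}_{n}\mathcal{P}_n$. Hence $S$ is a well defined linear bijection $\mathcal{T}_{sym}({\mathbb C}^d)\to\mathcal{P}$ carrying the homogeneous component $({\mathbb C}^d)^{\widehat{\otimes}n}$ exactly onto $\mathcal{P}_n$, so it is gradation preserving.

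It remains to produce the $*$--algebra structure. For this I would transport the product and the involution of $\mathcal{P}$ along $S$, setting $\xi\diamond\eta:=S^{-1}(S\xi\cdot S\eta)$ and $\xi^{\dagger}:=S^{-1}\big((S\xi)^{*}\big)$; with respect to these operations $S$ is, tautologically, a gradation preserving $*$--algebra isomorphism. The point I would emphasise --- and what I expect to be the real content and the main obstacle --- is that this transported product is in general \emph{not} the symmetrised tensor product of Lemma~\ref{lem1.4}: an arbitrary complement $\mathcal{P}_n$ need not satisfy $\mathcal{P}_m\cdot\mathcal{P}_n\subseteq\mathcal{P}_{m+n}$, so the naive symmetric product fails to make $S$ multiplicative, and the discrepancy is precisely the interacting deformation that the rest of the paper studies. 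Verifying multiplicativity by hand, rather than by transport, would be the laborious alternative and I would avoid it. Finally, since the involution of $\mathcal{P}$ is coefficientwise conjugation and therefore preserves each filtration space $\mathcal{P}_{n]}$, the transported involution $\dagger$ is well defined; if in addition the complements $\mathcal{P}_n$ are chosen $*$--invariant (as one may always arrange, and as happens for the orthogonal gradations used later) then $\dagger$ is itself gradation preserving.
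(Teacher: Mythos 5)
Your construction of the vector space isomorphisms is essentially the paper's own proof: the paper invokes Lemma \ref{dim-Pn-const} to obtain an abstract isomorphism $T_n:{\cal P}^0_n\to{\cal P}_n$ and sets $S_n:=T_n\circ S^0_n$, while you make the same move but pick $T_n$ canonically, namely as the inverse of the restriction to ${\cal P}_n$ of the projection of ${\cal P}_{n]}$ onto ${\cal P}^0_n$ along ${\cal P}_{n-1]}$. Your choice is correct (injectivity follows from directness of the hypothesis sum, surjectivity from the dimension count of Lemma \ref{dim-Pn-const}), and it buys a canonical $S_n$ where the paper has only an existence statement; the assembly of $S$ and the gradation argument via ${\cal P}_{n]}={\cal P}_0\,\dot+\,\cdots\,\dot+\,{\cal P}_n$ and (\ref{union-Pn=P}) also coincides with what the paper intends.

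Where you genuinely diverge is on the $*$--algebra claim, and there your treatment is more careful than the paper's. The paper's proof ends with ``the thesis follows'' and never addresses multiplicativity; as you correctly observe, it cannot hold literally, since an arbitrary family of complements need not satisfy ${\cal P}_m\cdot{\cal P}_n\subseteq{\cal P}_{m+n}$ (for $d=1$ take ${\cal P}_1=\mathbb C X$ and ${\cal P}_2=\mathbb C(X^2+1)$: then ${\cal P}_1\cdot{\cal P}_1=\mathbb C X^2\not\subseteq{\cal P}_2$, so no gradation preserving bijection onto ${\cal P}$ can be multiplicative for the polynomial product; similarly ${\cal P}_1=\mathbb C(X+i)$ shows the complements need not be $*$--invariant). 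Reading the conclusion via transport of structure, as you do, is the only way the statement holds for arbitrary complements, and it is consistent with how the lemma is actually used later in the paper: in Lemma \ref{id-symm-tens} and Theorem \ref{3diag-equiv-new-Favrd} it is invoked only for the dimension equality $\dim(\mathbb C^d)^{\widehat\otimes n}=\dim{\cal P}_n$, i.e.\ as a vector space isomorphism. One small imprecision: the transported involution $\dagger:=S^{-1}\circ *\circ S$ is well defined simply because $S$ is a bijection; the remark that $*$ preserves each ${\cal P}_{n]}$ is needed only for $\dagger$ to preserve the filtration, and $*$--invariance of the complements only for it to preserve the gradation, exactly as you say.
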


\begin{proof}
From Lemma \ref{vect-sub-sp-em} we know that,
for all $n\in{\mathbb N}$, $\mathcal P_n$ has the same
dimension as $\mathcal P^0_n$ (given by (\ref{card-D-mon-deg-n})).
Hence there exists a vector space isomorphism
$$
T_n \ : \ \mathcal P^0_n \ \rightarrow \
\mathcal P_n\: , \qquad\forall \, n\in{\mathbb N}.
$$
Defining $S_n := T_n\circ S^0_n $ where $S^0_n $ is given
by (\ref{df-iso-S0}), (\ref{df-iso-Sn}) follows.
This implies that the map defined by (\ref{df-iso-S}) is a gradation
preserving vector space isomorphism.
\end{proof}

{\bf Remark}.  In general the map defined by (\ref{df-iso-S}) is not an
isomorphism of commutative $*$--algebras in particular the analogue for $S$
of (\ref{intertw-Mej-Xj}) does not hold. To obtain this additional property
will require a different choice for the vector space isomorphisms
$T_n$ (see section \ref{multi-dim-Favard-Lm} below).

\subsection{ States on $\mathcal P$ }\label{St-P}

For the terminology on pre--Hilbert spaces we refer to
Appendix \ref{App-Orth-proj-PHS}.\\
Denote $\mathcal S(\mathcal P)$ the set of linear functionals on $\mathcal P$
that are real on real polynomials, $1$ on the identity and positive on
polynomials of the form $P=|Q|^2$ with $P,Q\in\mathcal P$. Such linear functionals
will be called {\bf states}.\\
Any probability measure on $\mathbb R^d$ with all moments induces a state
on $\mathcal P$. The converse finds an obstruction in the existence, for $d>1$,
of positive polynomials $P$ not expressible in the form $P=|Q|^2$. We refer to
the paper \cite{[AlpJorgKim14]} for references on this old and deep problem,
that is related to the polynomial version of Hilbert's $17$--th problem.\\
Even in case of existence and even in the case $d=1$, there may be many
probability measures on $\mathbb R^d$ defining the same state on $\mathcal P$
(non uniqueness in the moment problem). On the contrary, the state on
$\mathcal P$ is uniquely defined. For this reason in the following
we restrict our attention to states on $\mathcal P$.\\
As shown in the following of the present paper,
all the constructions related to orthogonal polynomials are valid in the
more general framework of states on $\mathcal P_{V}$.
Therefore in the following we will discuss this more general framework.\\

\noindent Any state $\varphi\in \mathcal S(\mathcal P)$
defines a pre--scalar product
$\langle \ \cdot \ , \ \cdot \ \rangle_\varphi $
on $\mathcal P$ given by
\begin{equation}\label{varphi-sc-pr}
(a,b)\in \mathcal P\times \mathcal P\mapsto
\langle a, b\rangle_\varphi :=\varphi (a^*b)\in\mathbb C
\end{equation}
satisfying the conditions
$$
\langle 1_{\mathcal{P}}, 1_{\mathcal{P}}\rangle_{\varphi} =1
$$
\begin{equation}\label{<ab,c>=<b,a*c>}
\langle ab, c\rangle_{\varphi} =\langle b, a^*c\rangle_{\varphi}
\, ,  \qquad \forall \, \: a,\, b,\, c\in \mathcal P ,
\end{equation}
where $a^*$ denotes the adjoint of $a$ in $\mathcal P$.
In particular the operators
$X_j$ are symmetric as pre--Hilbert space operators.
Thus the pair
$$
(\mathcal P \, ,  \, \langle \ \cdot \ , \ \cdot \ \rangle_\varphi)
$$
is a commutative pre--Hilbert algebra.

\begin{lemma}\label{st-ind-sc-pr}{\rm
For a pre--scalar product
$\langle \ \cdot \ , \ \cdot \ \rangle$
on $\mathcal{P}$ the following statements are  {\bf equivalent}:\\
(i)
There exists a state $\varphi$ on $\mathcal{P}$ such that:
\begin{equation}\label{sc-pr-ind-by-st}
\varphi(f^*g)  = \langle f, g\rangle
\: ,\qquad f, g\in \mathcal{P}.
\end{equation}
(ii)
The pre--scalar product
$\langle \ \cdot \ , \ \cdot \ \rangle$ satisfies
\begin{equation}\label{<1,1>=1}
\langle 1_{\mathcal{P}}, 1_{\mathcal{P}}\rangle =1
\end{equation}
and, for each $j\in D$, multiplication by the coordinate $X_j$
is a symmetric linear operator on $\mathcal{P}$ with respect to
$\langle \ \cdot \ , \ \cdot \ \rangle$, i.e.:
\begin{equation}\label{Xj-sym}
\langle X_{j}f, g\rangle =\langle f, X_{j}g\rangle .
\end{equation}
}\end{lemma}

\begin{proof}
\textbf{(ii) $\Rightarrow$ (i)}.
Every scalar product on $\mathcal P$ is induced by the linear functional:
\begin{equation}\label{<1,Q1>}
\varphi (Q):=\langle 1_{\mathcal{P}}, Q\cdot 1_{\mathcal{P}}\rangle
\: ,\qquad  Q\in\mathcal P .
\end{equation}
Condition (\ref{Xj-sym}) implies that $\varphi$ is a $*$--functional on
$\mathcal P$, i.e. for any $Q\in\mathcal P$,
$\overline{\varphi (Q)}=\varphi (Q^*) $, where $*$ denotes the involution
on $\mathcal P$. Hence condition (\ref{<1,Q1>}) implies that $\varphi$
is positive. Then, because of (\ref{<1,1>=1}), $\varphi$ is a state
on $\mathcal P$.\\
\textbf{(i) $\Rightarrow$ (ii)}. This is clear and has already been
discussed before the statement of the Theorem.
\end{proof}

\section{The multi--dimensional Favard problem}\label{multi-dim-Fav-probl}

\subsection{Fundamental lemmas}

\begin{definition}\label{df-monic-ss}{\rm
For $n\in\mathbb{N}$ we say that a sub--space
$\mathcal{P}_n\subset \mathcal{P}_{n]}$
is {\bf monic of degree $n$} if it
has a {\bf real} linear basis $B_n$ with the
property that for each $b\in B_n$, the highest order
term of $b$ is a non-zero multiple of a single monomial of degree $n$
and each monomial of degree $n$ appears exactly once in the basis $B_n$.\\
Such a basis is called {\bf a perturbation of the monomial basis
of order $n$} in the coordinates $(X_j)_{j\in D}$ or simply {\bf a monic basis of order $n$}
if no confusion is possible.
}\end{definition}

{\bf Remark}. For a monic sub--space one has:
\begin{equation}\label{df-monic-ss2}
\mathcal{P}_{n]}=\mathcal{P}_{n-1]} \  \dot+ \ \mathcal{P}_n
\end{equation}
(with the convention $\mathcal{P}_{-1]}=\{0\}$).
Notice that monic bases arise naturally
in the Gram--Schmidt orthogonalization process of monomials.\\

Let $\varphi$ be a state on $\mathcal P$ and denote
$$
\langle \ \cdot \ , \ \cdot \ \rangle :=
\langle \ \cdot \ , \ \cdot \ \rangle_\varphi
$$
the corresponding pre--scalar product.
When no ambiguity is possible, the elements $\xi$
of $\mathcal{P}$ (resp. $\mathcal P_{n]}$, $\mathcal P^0_{n}$)
satisfying
$$
\langle \xi, \xi \,\rangle =0
$$
will be simply called {\it zero norm vectors} without
explicitly mentioning the pre--scalar product (or the
associated state $\varphi$).
By the Schwarz inequality the set of zero norm vectors in
$\mathcal{P}$ (resp. $\mathcal P_{n]}$, $\mathcal P^0_{n}$),
denoted $\mathcal{N}_{\varphi}$ (resp.
$\mathcal{N}_{\varphi,n]}$, $\mathcal{N}_{\varphi,n}$) is a
$*$-sub--space satisfying
\begin{equation}\label{pres-0-norm-el}
\mathcal{P}\mathcal{N}_{\varphi,n}\subseteq
\mathcal{P}\mathcal{N}_{\varphi,n]}\subseteq
\mathcal{P}\mathcal{N}_{\varphi}\subseteq \mathcal{N}_{\varphi}.
\end{equation}
In particular $\mathcal{N}_{\varphi}$ is a $*$--ideal
of $\mathcal{P}$.
The monomial decomposition (\ref{1.5a}) is
compatible with the filtration $(\mathcal P_{F,n]})$
in the sense of (\ref{vect-sp-dir-sum}), therefore
$$
\mathcal P= \mathcal P_{n]} \, \dot + \, \Big(\sum^{\centerdot}_{k>n} \mathcal P_{k}^{0}\Big)
\: , \qquad\forall \, n\in\mathbb N .
$$
For reasons that will be clear in the reconstruction theorem
of section \ref{rec-theo} we want to keep the discussion at
a pure vector space, rather than Hilbert space level.
In particular we don't want to quotient out the zero norm
vectors. Therefore, rather than the usual Gram--Schmidt
orthonormalization procedure, we use its pre--Hilbert space
variant, described in Appendix \ref{App-Orth-proj-PHS}.

\begin{lemma}\label{from-st-to-grad}{\rm
Let $\varphi$ be a state on $\mathcal P$ and denote
$\langle \ \cdot \ , \ \cdot \ \rangle
= \langle \ \cdot \ , \ \cdot \ \rangle_\varphi$
the associated pre--scalar product.
Then there exists a gradation
\begin{equation}\label{orth-pol-grad1}
\mathcal P
=\bigoplus_{n\in\mathbb N}\mathcal P_{n,\varphi}
\end{equation}
called {\bf the $\varphi$--orthogonal gradation} of $\mathcal P$,
with the following properties:
\begin{description}
  \item[(i)] (\ref{orth-pol-grad1}) is orthogonal for the
pre--scalar product $\langle  \cdot \ , \ \cdot \ \rangle$;
  \item[(ii)] (\ref{orth-pol-grad1}) is compatible with the
filtration $(\mathcal P_{n]})$ in the sense that
\begin{equation}\label{filt-rec}
\mathcal P_{k]}=\bigoplus_{h\in\{0,1,\cdots,k\}}
\mathcal P_{h,\varphi}
\: , \qquad\forall \, k\in\mathbb N ;
\end{equation}
  \item[(iii)] for each $n\in\mathbb N$ the space
$\mathcal P_{n,\varphi}$ is monic.
\end{description}
Conversely, let be given:
\begin{description}
  \item[(j)] a vector space direct sum decomposition of $\mathcal P$
\begin{equation}\label{rec-2-1}
\mathcal P=\sum^{\centerdot}_{n \in \mathbb{N}}\mathcal P_n
\end{equation}
such that $\mathcal P_0=\mathbb C\cdot 1_{\mathcal P}$, and
for each $n\in\mathbb N,\ \mathcal P_n$ is monic
of degree $n$,
  \item[(jj)] for all $n\in\mathbb N$ a pre--scalar product
$\langle \ \cdot \ , \ \cdot \ \rangle_n$ on
$\mathcal P_n$ with the property that
$1_{\mathcal P}$ has norm $1$ and the unique pre--scalar product
$\langle \ \cdot \ ,  \cdot \ \rangle$ on $\mathcal P$
defined by the conditions:
\begin{equation}\label{restr-scal-prod}
\langle \ \cdot \ , \ \cdot \ \rangle|_{\mathcal P_n}
=\langle \ \cdot \ , \ \cdot \ \rangle_n\: , \qquad\forall \, n\in\mathbb N ,
\end{equation}
\begin{equation}\label{cond-orth-grad}
\mathcal P_n\perp\mathcal P_m\: , \qquad\forall \, m\neq n,
\end{equation}
is such that the operators of multiplication by the coordinates
$X_j$ ($j\in D$) are $\langle \ \cdot \ , \ \cdot \ \rangle$--symmetric
linear operators on $\mathcal P$.

Then there exists a state $\varphi$ on $\mathcal P$ such
that the decomposition (\ref{rec-2-1}) is the
orthogonal polynomial decomposition of $\mathcal P$ with
respect to $\varphi$.
\end{description}
}\end{lemma}

\begin{proof}
Let be given a state $\varphi$ on $\mathcal P$.
In the above notations, for each $k\in\mathbb N$ define
inductively the sub--space $\mathcal P_{k,\varphi}$ and the two sequences of
$\langle \ \cdot \ , \ \cdot \ \rangle$--orthogonal
projectors
$$
P_{k],\varphi} \ : \ \mathcal P\rightarrow\mathcal P_{k]}
 \, ,  \qquad P_{k,\varphi} \ : \ \mathcal P\rightarrow\mathcal P_{k,\varphi}\: , \qquad \forall\, k\in\mathbb N
$$
compatible with the real structures of the corresponding spaces
(i.e. $P_{k],\varphi}(\mathcal P_{\mathbb R})\subseteq \mathcal P_{\mathbb R,k]}$,
$P_{k,\varphi}(\mathcal P_{\mathbb R})\subseteq \mathcal P_{\mathbb R,k}$,
and in this case we speak of {\bf real projectors}) as follows. \\
For $k=0$, define $\: \mathcal P_{0,\varphi}:= \mathcal P_{0]}$ and
$$
P_{0,\varphi}:= P_{0],\varphi} \ : \ Q\in\mathcal P \ \longmapsto \
\varphi(Q)1_{\mathcal P}
=\langle 1_{\mathcal P},Q\cdot 1_{\mathcal P}\rangle 1_{\mathcal P}
\in \mathcal P_{0]}=:\mathcal P_{0,\varphi}
\: , \qquad \forall \, Q\in\mathcal P .
$$
Clearly $P_{0,\varphi}$ is a real projector.
Having defined the real projectors
$$
\{P_{0,\varphi}, P_{1,\varphi}, \cdots , P_{n,\varphi} \}
\: , \qquad
\{P_{0],\varphi}, P_{1],\varphi}, \cdots , P_{n],\varphi} \}
$$
so that for each $k\in\{0,1,\dots ,n\}$ the space
$\mathcal P_{k,\varphi}$ is monic and (\ref{filt-rec})
is satisfied, in the notation (\ref{df-mon-n}), define
\begin{equation}\label{df-Pn+1fi}
\mathcal P_{n+1,\varphi}:=\hbox{lin-span}
\{M_{n+1}-P_{n],\varphi}(M_{n+1}) \ : \ M_{n+1}\in\mathcal M_{n+1}\}.
\end{equation}
Then the space $\mathcal P_{n+1,\varphi}$ is monic of order
$n+1$ since the generating set on the right hand side of (\ref{df-Pn+1fi})
is clearly a basis, it is real because such is the projector $P_{n],\varphi}$
and it is a perturbation of the monomial basis of order $n$ because
the $P_{n],\varphi}(M_{n+1})$ are polynomials of degree $n$.
In particular the sum
$$
\mathcal P_{n+1,\varphi} + \mathcal P_{n]} = \mathcal P_{n+1]}
$$
is direct, hence such is also the decomposition
$$
\mathcal P=
\mathcal P_{n+1,\varphi} \dot+ \mathcal P_{n]} \dot+ \mathcal P_{(n+1}
$$
($\mathcal P_{(n+1}$ denotes the space of polynomials of degree $>n+1$).

Define ${\mathcal K}_{0,1}$ (resp. ${\mathcal K}_{0,0}$) the sub--space of
$\mathcal P_{n+1,\varphi}$ generated by the
non--$\langle \ \cdot \ , \ \cdot \ \rangle$--zero norm (resp.
$\langle \ \cdot \ , \ \cdot \ \rangle$--zero norm) vectors in the set
on the right hand side of (\ref{df-Pn+1fi}). Since the elements of this set are
linearly independent, ${\mathcal K}_{0,1}\cap {\mathcal K}_{0,0}=\{0\}$ and
by construction
${\mathcal K}_{0,1}\dot + {\mathcal K}_{0,0}=\mathcal P_{n+1,\varphi}$.
By the induction assumption on the $\langle\ \cdot \ , \ \cdot \ \rangle_{n}$,
the real structure on $\mathcal P$ induces a real structure on
$\mathcal P_{n+1,\varphi}$.

Applying Corollary \ref{proj-PHS} of Appendix
\ref{App-Orth-proj-PHS} with ${\mathcal K}=\mathcal P$,
$\mathcal{K}_{0}=\mathcal P_{n+1,\varphi}$,
$\mathcal{K}_{1}:=\mathcal P_{n]} \dot+ \mathcal P_{(n+1}$
and ${\mathcal K}_{0,1}$
any vector space supplement of the
$\langle \ \cdot \ , \ \cdot \ \rangle$--zero norm
sub--space ${\mathcal K}_{0,0}$ of $\mathcal P_{n+1,\varphi}$, we define the orthogonal
projection
$$
P_{n+1,\varphi} \ : \ \mathcal P \ \to \ \mathcal P_{n+1,\varphi}
$$
which by construction is onto $\mathcal P_{n+1,\varphi}$ hence
orthogonal to $P_{n],\varphi}$. Therefore the operator
$$
P_{n+1],\varphi} \ := \  P_{n],\varphi} + P_{n+1,\varphi}
$$
is the orthogonal projection onto $\mathcal P_{n+1]}$.
Finally, given $\varphi$, the conditions of Lemma
\ref{st-ind-sc-pr} are satisfied by the associated
pre--scalar product on $\mathcal P$.
This completes the induction construction.

To prove the converse, notice that
the fact that each $\mathcal P_n$ is monic implies that the
decomposition (\ref{rec-2-1}) satisfies condition
(\ref{filt-rec}).
In fact this is true for $\mathcal P_0$ by construction and,
supposing it true for $k\in\mathbb N$, it follows for $k+1$
from the monicity condition.
Thus, by induction, property (\ref{filt-rec}) holds for
each $n\in\mathbb N$.
Because of Lemma \ref{st-ind-sc-pr}, condition (jj) implies
that the pre--scalar product
$\langle \ \cdot \ , \ \cdot \ \rangle$ is induced by
a state $\varphi$ in the sense of the identity
(\ref{varphi-sc-pr}).
This implies that the decomposition (\ref{rec-2-1}) is the
orthogonal polynomial decomposition of $\mathcal P$ with
respect to the state $\varphi$.
\end{proof}

The following Lemma shows
that the isomorphism, defined abstractly in Lemma \ref{lemm}
can be explicitly constructed if the gradation on $\mathcal P$
is the one constructed in Lemma \ref{from-st-to-grad}.

\begin{lemma}\label{new-id-symm-tens}{\rm
Let be given a vector space direct sum decomposition
of $\mathcal P$ of the form (\ref{rec-2-1}) satisfying
conditions (j) and (jj) of Lemma \ref{from-st-to-grad}.
Let $B_{n}$ be a perturbation of the monomial basis
in $\mathcal P_{n}$ (see Definition \ref{df-monic-ss})
and for each monomial $M_n\in\mathcal{M}_{D,n}$
denote $p_n(M_n)$ the corresponding element of $B_{n}$.
Then the map
\begin{equation}\label{new-symm-tens-vn-a+vn}
\pi_n:\;e_{j_n}\widehat{\otimes} e_{j_{n-1}}\widehat{\otimes}\cdots
\widehat{\otimes} e_{j_{1}}
\in (\mathbb C^d)^{\widehat{\otimes} n} \
\longmapsto \ p_n\left(X_{j_n}X_{j_{n-1}} \cdots X_{j_{1}}
\right)\cdot 1_{\mathcal P}\in \mathcal P_{n}
\end{equation}
where $n\in\mathbb{N}^*\;(\pi_0=id_{\mathbb{C}})$ and
$\widehat{\otimes}$ denotes symmetric tensor product,
extends to a vector space isomorphism.
}\end{lemma}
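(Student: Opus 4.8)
The plan is to recognize $\pi_n$ as the composition of the explicit isomorphism $S^0_n$ of Lemma \ref{lem1.4} with the change--of--basis map that turns the monomial basis of $\mathcal P^0_n$ into the given monic basis $B_n$ of $\mathcal P_n$. Concretely, I would introduce the linear map
$$
T_n \ : \ \mathcal P^0_n \ \to \ \mathcal P_n\, , \qquad
X_{j_n}\cdots X_{j_1} \ \longmapsto \
p_n(X_{j_n}\cdots X_{j_1})\cdot 1_{\mathcal P}
$$
prescribed on the monomial basis $\mathcal M_{D,n}$ of $\mathcal P^0_n$ and extended by linearity, and then observe that, by the very definition (\ref{new-symm-tens-vn-a+vn}), one has $\pi_n = T_n\circ S^0_n$.

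First I would check that $T_n$ is well defined and an isomorphism. It is well defined because $\mathcal M_{D,n}$ is a linear basis of $\mathcal P^0_n$ (principle of identity of polynomials), so a linear map may be prescribed arbitrarily on it. It is an isomorphism because, $B_n$ being a perturbation of the monomial basis in the sense of Definition \ref{df-monic-ss}, the assignment $M_n \mapsto p_n(M_n)$ is a bijection of $\mathcal M_{D,n}$ onto the monic basis $B_n$ of $\mathcal P_n$; hence $T_n$ carries a basis onto a basis. Since $S^0_n$ is a vector space isomorphism of $(\mathbb C^d)^{\widehat{\otimes}n}$ onto $\mathcal P^0_n$ by Lemma \ref{lem1.4}, the composite $\pi_n = T_n\circ S^0_n$ is a vector space isomorphism of $(\mathbb C^d)^{\widehat{\otimes}n}$ onto $\mathcal P_n$, which is the assertion for $n\in\mathbb N^*$; the case $n=0$ is the stated identification $\pi_0 = id_{\mathbb C}$.

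The only point requiring any care is the well--definedness of $\pi_n$ as a map on the symmetric tensor space, i.e. the consistency of (\ref{new-symm-tens-vn-a+vn}) under permutations of the indices $(j_1,\dots,j_n)$. This is immediate and is already subsumed in the factorization above: the left--hand side $e_{j_n}\widehat{\otimes}\cdots\widehat{\otimes}e_{j_1}$ is symmetric in the indices, while the right--hand side depends only on the monomial $X_{j_n}\cdots X_{j_1}$, which by commutativity of the coordinates is likewise invariant under permutations, so $p_n$ is applied to a well--defined monomial; thus no independent verification is needed beyond invoking Lemma \ref{lem1.4}. I do not expect any genuine obstacle: the content of the lemma is simply that the abstract isomorphism produced in Lemma \ref{lemm} admits the explicit description (\ref{new-symm-tens-vn-a+vn}) once a monic basis is fixed, and the proof reduces to exhibiting the two elementary isomorphisms whose composition it is.
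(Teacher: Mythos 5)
Your proof is correct, but it is organized differently from the paper's. The paper proves the lemma ``from scratch'': it defines the map on the full tensor power $(\mathbb C^d)^{\otimes n}$, invokes the universal property of the tensor product to get a linear map $\widehat{\pi}_n$, checks surjectivity, then uses the permutation--invariance of the right--hand side to pass to the quotient $(\mathbb C^d)^{\widehat{\otimes}n}$ and argues injectivity and surjectivity of the induced map via the basis correspondence $M_n \mapsto p_n(M_n)\cdot 1_{\mathcal P}$ --- in effect repeating, with $p_n$ inserted, the argument already used to prove Lemma \ref{lem1.4}. You instead factor $\pi_n = T_n \circ S^0_n$, reusing Lemma \ref{lem1.4} as a black box for all the multilinearity and symmetrization issues, and reducing the new content to the elementary observation that the change--of--basis map $T_n \colon \mathcal P^0_n \to \mathcal P_n$, sending the monomial basis $\mathcal M_{D,n}$ bijectively onto the monic basis $B_n$, is an isomorphism (the bijectivity of $M_n \mapsto p_n(M_n)$ holds because two distinct monomials cannot have equal monic perturbations, their difference being a nonzero element of $\mathcal P^0_n$, not of $\mathcal P_{n-1]}$). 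Your factorization is exactly the explicit version of the paper's own proof of the abstract Lemma \ref{lemm}, where $S_n := T_n \circ S^0_n$ with $T_n$ obtained only from a dimension count; what you gain is brevity and modularity, while the paper's direct argument is self--contained and re--exhibits the symmetrization mechanism that the lemma's statement is really about. Both are complete proofs.
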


\begin{proof}
A basis $B_{n}$ as in the statement of the Lemma exists because
$\mathcal P_{n}$ is monic.
Denoting $j:\{1,\dots ,n\}\to \{1,\dots ,d\}$ a generic
function, the map
\begin{equation}\label{new-3.32a}
e_{j_n}\otimes e_{j_{n-1}}\otimes\cdots
\otimes e_{j_{1}}\longmapsto
p_n\left(X_{j_n}X_{j_{n-1}} \cdots X_{j_{1}}
\right)\cdot 1_{\mathcal P}\in \mathcal P_{n}
\end{equation}
is well defined on a linear basis of $(\mathbb C^d)^{\otimes n}$
because $X_{j_n}X_{j_{n-1}} \cdots X_{j_{1}}$ is a monomial of
degree $n$. Since both sides in (\ref{new-3.32a}) are
multi--linear, by the universal property of the tensor product
it extends to a linear map, denoted $\widehat{\pi}_{n}$, of
$(\mathbb C^d)^{\otimes n}$ into $\mathcal P_{n}$.
This map is surjective because when $j$ runs over all maps
$\{1,\dots ,n\}\to \{1,\dots ,d\}$,
$p_n\left(X_{j_n}X_{j_{n-1}} \cdots X_{j_{1}}
\right)\cdot 1_{\mathcal P}$ runs over a linear basis
of $\mathcal P_{n}$.
Since the right hand side of (\ref{new-3.32a}) is invariant
under permutations of the indices $j_n,j_{n-1},\cdots, j_{1}$,
$\widehat{\pi}_n$ induces a linear map of the vector space of
equivalence classes of elements of $(\mathbb{C}^d)^{\otimes n}$
with respect to the equivalence relation induced by the linear
action of the permutation group. Since this quotient space is
canonically isomorphic to the symmetric tensor product
$(\mathbb{C}^d)^{\widehat{\otimes} n}$, this induced map defines
a linear extension of the map (\ref{new-symm-tens-vn-a+vn}). \\
This extension is an isomorphism because we have already
proved that surjectivity and injectivity follow from the
fact that the equivalence class under permutations of
any $n$--tuple $(j_n,j_{n-1},\cdots, j_{1})$ defines a
unique element of the basis
$\left\{p_n(M_{n})\cdot 1_{\mathcal P} \, ; \;
M_{n}\in\mathcal P_{n}\right\}$ of $\mathcal P_{n}$.
\end{proof}

\noindent{\bf Remark}. The construction of Lemma \ref{from-st-to-grad}
depends on the choice of the vector space supplement of
the zero norm sub--space of $\mathcal P_{n,\varphi}$. However any
vector in another supplement will differ by a zero norm vector
from a vector in the previous choice. Therefore, at Hilbert
space level, the two choices will coincide.

\subsection{ Statement of the multi--dimensional Favard problem}

From Lemma \ref{from-st-to-grad} we know that the
orthogonal polynomial decomposition of $\mathcal P$ with
respect to a state $\varphi$ induces a decomposition of
$\mathcal P$ of the form (\ref{rec-2-1}).
Given such a decomposition, for every $n\in\mathbb N$,
we can use the vector space isomorphisms $\pi_n$ defined
in Lemma \ref{new-id-symm-tens} to transfer the pre--Hilbert
structure of $\mathcal P_{n}$ on the symmetric tensor
product space $(\mathbb C^d)^{\widehat{\otimes} n}$.
Imposing the orthogonality of the $\mathcal P_{n}$'s one obtains
a gradation preserving unitary isomorphism between $\mathcal P$,
with the orthogonal polynomial gradation induced by the state
$\varphi$, and a symmetric interacting Fock space structure
over $\mathbb C^d$ (see Appendix \ref{App-S-IFS}).
The converse of this statement is at basis of the
multi--dimensional Favard problem:\\
Given a symmetric interacting Fock space structure over
$\mathbb C^d$ (see section \ref{App-S-IFS} below)
$$
\bigoplus_{n\in\mathbb N}\left((\mathbb C^d)^{\widehat{\otimes} n}
 \ , \ \langle \ \cdot \ ,\Omega_n \ \cdot \
\rangle_{\widehat{\otimes}n}\right):
$$
\begin{description}
  \item[(i)] does there exist a state $\varphi$ on $\mathcal P$
whose associated symmetric IFS is the given one?
  \item[(ii)] it is possible to parameterize all solutions
of problem (i) and to characterize them constructively?
\end{description}
The second part of the present paper is devoted to the
solution of this problem.
Before that, in the following section, we establish
some notations and necessary conditions.

\section{The symmetric Jacobi relations}\label{sec-symm-jac-rel}

\subsection{The orthogonal gradation and the three--diagonal recurrence relations}

In this section we fix a state $\varphi$ on $\mathcal P$
and we follow the notations of Lemma \ref{from-st-to-grad}
with the exception that we omit the index $\varphi$.
Thus we write $\langle \ \cdot \ , \ \cdot \ \rangle$
for the pre--scalar product
$ \langle \ \cdot \ , \ \cdot \ \rangle_\varphi$,
$P_{k]}:\mathcal P\rightarrow\mathcal P_{k]}$
($k\in\mathbb N$) for the
$\langle \ \cdot \ , \ \cdot \ \rangle$--orthogonal
projector in the pre-Hilbert space sense, constructed in the proof of
Lemma \ref{from-st-to-grad}, $\mathcal P_{k+1}$ for the
space defined by (\ref{df-Pn+1fi}) and
\begin{equation}\label{df-PFn2}
P_{n}=P_{n]}-P_{n-1]}
\end{equation}
the corresponding projector. We know that
\begin{equation}\label{PFn]-real}
P_{n]}(\mathcal P_{{\mathbb R}})\subseteq
\mathcal P_{{\mathbb R}}\cap\mathcal P_{n]}
=\mathcal P_{{\mathbb R},n]}
\: , \qquad \forall \, \, n\in\mathbb N ,
\end{equation}
and that the sequence $(\mathcal{P}_{n]})_{n \in \mathbb{N}}$
is an increasing filtration with union  $ \mathcal P$
(see (\ref{inclusions}) and (\ref{union-Pn=P})).
It follows that the sequence of projections (\ref{df-PFn2})
is a partition of the identity in
$(\mathcal P,\langle \ \cdot \ , \ \cdot \ \rangle)$, i.e.
\begin{equation}\label{mut-orth-PFn}
P_{n}P_{m}=\delta_{mn}P_{m}\, ,  \qquad P_{n}=P_{n}^*
\, ,\qquad \forall \, \, m,\: n\in\mathbb N ,
\end{equation}
\begin{equation}\label{sum-PFn=1}
\sum_{n\in\mathbb N} P_{n}=\lim_n P_{n]} =1_\mathcal P .
\end{equation}

\begin{lemma}\label{lm-stabil-Jac-proj}{\rm
Suppose that, for some $m\in\mathbb N^*$, the range of $P_{m}$ is
contained in the sub--space of zero--norm vectors. Then the same is
true for any $n\geq m$, i.e.
\begin{equation}\label{stabil-Jac-proj}
P_{n}(\mathcal P)\subseteq \mathcal N \: ,  \qquad \forall \, n\geq m .
\end{equation}
}\end{lemma}

\begin{proof}
Under our assumptions for any monomial $M_{m}$ of degree $m$, one has\\
$M_{m}-P_{m-1]}(M_{m})\in \mathcal N$. This implies that
$M_{m}\in \mathcal P_{m-1]}+\mathcal N$. Since multiplication by coordinates
leaves $\mathcal N$ invariant, this implies that for each $j\in D$,
$X_jM_{m}\in \mathcal P_{m]}+\mathcal N$. Therefore for any monomial
$M_{m+1}$ of degree $m+1$, $M_{m+1}\in \mathcal P_{m]}+\mathcal N$. In
particular $M_{m+1}-P_{m]}(M_{m+1})\in \mathcal N$, i.e.
$\mathcal P_{m+1}\subseteq \mathcal N$ and this is equivalent to the thesis.
\end{proof}

\begin{theorem}\label{th-Symm-Jac-rel}{\rm
With the notation
\begin{eqnarray*}\label{PF-1]:=0}
P_{-1]}:=0
\end{eqnarray*}
for any $j\in D$ and any $n\in{\mathbb N}$, one has
\begin{equation}\label{Symm-Jac-rel}
X_jP_{n}=P_{n+1}X_jP_{n}+P_{n}X_jP_{n}+P_{n-1}X_jP_{n}.
\end{equation}
}\end{theorem}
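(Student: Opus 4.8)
The theorem is a three-term recurrence: multiplying a degree-$n$ orthogonal block by a coordinate $X_j$ lands only in blocks $n+1$, $n$, and $n-1$. The natural strategy is to insert a resolution of the identity and then kill all but three terms using the filtration structure. Specifically, start from the partition of unity in equation (\ref{sum-PFn=1}) and write
\begin{equation*}
X_jP_{n}=\Big(\sum_{m\in\mathbb N}P_{m}\Big)X_jP_{n}=\sum_{m\in\mathbb N}P_{m}X_jP_{n}.
\end{equation*}
The whole content is then to show $P_{m}X_jP_{n}=0$ whenever $m\notin\{n-1,n,n+1\}$, splitting into the cases $m>n+1$ and $m<n-1$.

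\textbf{The case $m>n+1$.} Here I would argue on the range of $X_jP_{n}$. Since $P_{n}$ projects onto $\mathcal P_{n}\subseteq\mathcal P_{n]}$, and multiplication by $X_j$ raises degree by at most one, Lemma \ref{struc-Pn]} (specifically (\ref{PFn+1]=XjPFn]=P0]}), giving $X\mathcal P_{F,n]}\subseteq\mathcal P_{F,n+1]}$) yields $X_j\mathcal P_{n]}\subseteq\mathcal P_{n+1]}$. Hence the range of $X_jP_{n}$ lies in $\mathcal P_{n+1]}$. But by compatibility of the gradation with the filtration, (\ref{filt-rec}), the projector $P_{m}$ for $m>n+1$ annihilates $\mathcal P_{n+1]}=\bigoplus_{h\le n+1}\mathcal P_{h}$ (the blocks are mutually orthogonal by (\ref{mut-orth-PFn})). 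Therefore $P_{m}X_jP_{n}=0$ for all $m>n+1$.

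\textbf{The case $m<n-1$.} Here the symmetry of multiplication by $X_j$ is the key tool. By Lemma \ref{st-ind-sc-pr}, each $X_j$ is a symmetric operator for $\langle\,\cdot\,,\,\cdot\,\rangle$, and each $P_{k}$ is self-adjoint by (\ref{mut-orth-PFn}). Thus $P_{m}X_jP_{n}=(P_{n}X_jP_{m})^{*}$, and by the previous paragraph applied with the roles of $m$ and $n$ swapped, $P_{n}X_jP_{m}=0$ as soon as $n>m+1$, i.e. $m<n-1$. Taking adjoints gives $P_{m}X_jP_{n}=0$ in this range as well. Collecting the three surviving terms $m=n+1,n,n-1$ produces exactly (\ref{Symm-Jac-rel}), with the convention $P_{-1]}:=0$ handling the boundary case $n=0$ (where the $P_{n-1}$ term simply vanishes).

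\textbf{Main obstacle.} The argument is essentially bookkeeping once the two structural inputs are in place, so there is no deep obstacle; the only point demanding care is the degree-raising bound $X_j\mathcal P_{n]}\subseteq\mathcal P_{n+1]}$, which must be cited from Lemma \ref{struc-Pn]} rather than assumed, together with the fact that the orthogonal gradation $(\mathcal P_{h})$ — not merely the monomial gradation $(\mathcal P^0_{h})$ — is compatible with the filtration via (\ref{filt-rec}). I would also make explicit that the symmetry step relies on $\langle\,\cdot\,,\,\cdot\,\rangle$ being only a pre-scalar product, so ``self-adjoint'' is understood in the pre-Hilbert sense of the projectors constructed in Lemma \ref{from-st-to-grad}; this is precisely why (\ref{mut-orth-PFn}) is stated for these projectors and can be invoked directly.
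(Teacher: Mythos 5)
Your proof is correct and follows essentially the same route as the paper's: insert the resolution of the identity (\ref{sum-PFn=1}), eliminate the terms with $m>n+1$ via the degree bound $X_j\mathcal P_{n}\subseteq\mathcal P_{n+1]}$, and eliminate the terms with $m<n-1$ by taking adjoints, using the symmetry of $X_j$ and the self-adjointness of the projectors, exactly as in the paper. The only cosmetic difference is that for $m>n+1$ the paper computes with projector identities, $P_{m}X_jP_{n}=(P_{m]}-P_{m-1]})P_{n+1]}X_jP_{n}=0$, whereas you argue directly on ranges via (\ref{filt-rec}) and (\ref{mut-orth-PFn}); both arguments rest on the same structural facts.
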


\begin{proof}
Because of (\ref{sum-PFn=1}), for any $j \in D$,
$$
X_j=1_{\mathcal{P}}\cdot X_{j}\cdot 1_{\mathcal{P}}=\sum_{m,n \in \mathbb{N}}P_{m}X_jP_{n}.
$$
Therefore for each $n\in\mathbb N$,
$$
X_j P_{n}=\sum_{m\in \mathbb{N}} P_{m}X_j P_{n}.
$$
Since
$$
X_j{\mathcal P}_{n}\subseteq{\mathcal P}_{n+1]}
$$
it follows that
$$
X_j P_{n}=P_{n+1]}X_jP_{n}.
$$
Since $(P_{m]})$ is increasing, if $m>n+1$
then
$$
P_{m]}P_{n+1]}=P_{m-1]}P_{n+1]}=P_{n+1]},
$$
hence
$$
P_{m}X_jP_{n}=
P_{m}P_{n+1]}X_jP_{n}=
(P_{m]}-P_{m-1]})P_{n+1]}X_jP_{n}=0.
$$
If $m<n-1$, then the first part of the proof implies that
$$
P_{m}XP_{n}=(P_{n}XP_{m})^*=0.
$$
Summing up: $P_{m}X_jP_{n}$ can be non-zero
only if $m\in\{n-1,n,n+1\}$ and this proves
(\ref{Symm-Jac-rel}).
\end{proof}

\begin{definition}{\rm
The identity (\ref{Symm-Jac-rel}) is called the
symmetric Jacobi relation.
}\end{definition}

\subsection{The CAP operators and the quantum decomposition of the coordinates}

For each $n\in\mathbb N$ and $j\in D$, define the operators
\begin{equation}\label{1df-a+j|n}
a^{+}_{j|n}:=P_{n+1}X_jP_{n}\Big|_{{\mathcal P}_{n}}
\ : \ \mathcal P_{n}\longrightarrow {\mathcal P}_{n+1}
\end{equation}
\begin{equation}\label{1df-a0j|n}
a^{0}_{j|n}:=P_{n}X_jP_{n}\Big|_{{\mathcal P}_{n}}
\ : \ {\mathcal P}_{n}\longrightarrow {\mathcal P}_{n}
\end{equation}
\begin{equation}\label{1df-a-j|n}
a^{-}_{j|n}:=P_{n-1}X_jP_{n}\Big|_{{\mathcal P}_{n}}
\ : \ {\mathcal P}_{n}\longrightarrow \mathcal P_{n-1}
\end{equation}

\noindent{\bf Remark}. Notice that for each $n\in\mathbb N$, $j\in D$
and $\varepsilon\in\{+,0,-\}$, the operators $a^{\varepsilon}_{j|n}$
map polynomials with real coefficients into polynomials with the same
property. In fact both multiplication by coordinates and the projections
$P_{n}$ satisfy this condition (see (\ref{PFn]-real})).\\
Notice that, $D$ being a finite set, the spaces
${\mathcal P}_{n}$ are finite dimensional.
Moreover, in the present algebraic context, the sum
\begin{equation}\label{weak-sum}
{\mathcal P}=\bigoplus_{n\in{\mathbb N}}{\mathcal P}_{n}
\end{equation}
is orthogonal and meant {\bf in the following weak sense}, i.e. for each
element $Q\in {\mathcal P}$ there is a finite set
$I\subset{\mathbb N}$ such that
\begin{equation}\label{finite-decomposition}
    Q=\sum_{n\in I}p_n
\: ,\qquad p_n\in {\mathcal P}_{n}.
\end{equation}

\begin{theorem}\label{th-Q--dec}{\rm
For any $j \in D$, the following operators are well defined on ${\mathcal P}$:
\begin{eqnarray*}\label{1df-a+j}
a^{+}_j&:=&\sum_{n\in{\mathbb N}}a^{+}_{j|n}\\
a^{0}_j&:=&\sum_{n\in{\mathbb N}}a^{0}_{j|n}\\
a^{-}_j&:=&\sum_{n\in{\mathbb N}}a^{-}_{j|n}
\end{eqnarray*}
and one has
\begin{equation}\label{q-dec-Xj}
X_j=a^{+}_j+a^{0}_j+a^{-}_j
\end{equation}
in the sense that both sides of (\ref{q-dec-Xj})
are well defined on $\mathcal P$ and the equality holds.
Moreover the decomposition on the right hand side of (\ref{q-dec-Xj})
is unique in the sense that, if $b^{+}_j$, $b^{0}_j$, $b^{-}_j$ are linear
operators on $\mathcal P$ satisfying (\ref{1df-a+j|n}), (\ref{1df-a0j|n}),
(\ref{1df-a-j|n}), then they coincide with $a^{+}_j$,
$a^{0}_j$, $a^{-}_j$ respectively. Finally the operators $a^{+}_j$,
$a^{0}_j$, $a^{-}_j$ map polynomials with real coefficients into
polynomials with the same property.
}\end{theorem}

\begin{proof}
For all $ j\in D$, using the symmetric Jacobi relation (\ref{Symm-Jac-rel}),
one has
\begin{eqnarray*}
  (a^{+}_j+a^{0}_j+a^{-}_j) &=&   \sum_{n\in \mathbb{N}}(a^{+}_{j|n}+a^{0}_{j|n}+a^{-}_{j|n})\\
   &=& \sum_{n\in \mathbb{N}} (P_{n+1}X_jP_{n}+P_{n}X_jP_{n}+P_{n-1}X_jP_{n}) \\
  &=& \sum_{n\in \mathbb{N}}X_jP_{n}=X_j .
\end{eqnarray*}
Finally uniqueness follows from the identity
$b^{+}_j+b^{0}_j+b^{-}_j=a^{+}_j+a^{0}_j+a^{-}_j$ and the fact that, for
$\epsilon \neq \epsilon '$ ($\epsilon, \epsilon '\in\{-1,0,+1\}$) the ranges
of the operators $a^{\epsilon}_j-b^{\epsilon}_j$ and
$a^{\epsilon'}_j-b^{\epsilon'}_j$ are orthogonal. therefore the operators
$a^{\epsilon}_j$ and $b^{\epsilon}_j$ coincide on all $n$--particle spaces,
hence on $\mathcal P$.
The last statement follows from the Remark after the definition of
the operators $a^{\varepsilon}_{j|n}$.
\end{proof}

\begin{definition}\label{df-quant-dec}{\rm
The identity (\ref{q-dec-Xj}) is called the {\bf quantum decomposition of}
$X_j$ with respect to the state $\varphi$.
}\end{definition}

\noindent{\bf Remark}.
The {\bf quantum decomposition of} $X_j$ with respect to $\varphi$
allows to extend the map $X:\mathbb{R}^d\to\mathbb{R}^d$ to a map
$X:\mathbb{C}^d\to\mathbb{C}^d$ as follows:
If $v=(v_1,\dots,v_d)\in\mathbb{C}^d$, we denote
\begin{equation}\label{notation}
a^{\varepsilon}_{v}:=\sum_{j\in D}v_ja^{\varepsilon}_{j}
\: ,\qquad\varepsilon\in\{+,0\}\: ,\qquad
a^{-}_{v}:=\sum_{j\in D}\bar v_ja^{-}_{j}.
\end{equation}
Then one defines, in the notation (\ref{inv-Cd})
\begin{equation}\label{df-gen-flds1}
X_v:= a^{+}_{v}+a^{0}_{v}+a^{-}_{v^*} \: ,\qquad v\in\mathbb{C}^d .
\end{equation}
With this definition one has
$$
(X_v)^*=X_{v^*}.
$$

\subsection{Properties of the quantum decomposition}

Notice that, by construction,
for any $j\in D$ and $ n\in{\mathbb N}$, the maps
$$
a^{+}_{j|n} :=P_{n+1}X_jP_{n}
$$
satisfy
\begin{equation}\label{rang-a+j|n-in-PF,n+1a}
a^{+}_{j|n} ({\mathcal P}_{\mathbb R,n}) \subseteq {\mathcal P}_{\mathbb R,n+1}
\end{equation}
hence in particular
\begin{equation}\label{rang-a+j|n-in-PF,n+1}
a^{+}_{j|n} ({\mathcal P}_{n}) \subseteq {\mathcal P}_{n+1}
\end{equation}
and recall that, by construction, the non-zero elements
of ${\mathcal P}_{n+1}$ are polynomials of degree $n+1$.

\begin{lemma}\label{(a+j*=a-j}{\rm
For any $j \in D$ and $n\in\mathbb N$, one has
\begin{eqnarray*}\label{a0j*=a0-j}
(a^{+}_{j|n})^{*} &=& a^{-}_{j|n+1}\: , \qquad
(a^{+}_{j})^{*} = a^{-}_{j}\label{a+j*=a-j};\\
(a^{0}_{j|n})^{*} &=& a^{0}_{j|n} \: , \qquad
(a^0_{j})^*=a^0_{j}.
\end{eqnarray*}
}\end{lemma}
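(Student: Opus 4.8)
The plan is to verify all four identities by direct computation of matrix elements, exploiting only three facts already established: that multiplication by each coordinate $X_j$ is symmetric for the pre--scalar product (Lemma \ref{st-ind-sc-pr}, since the $X_j$ are self-adjoint), that the projectors are self-adjoint, $P_n=P_n^*$ (see (\ref{mut-orth-PFn})), and that each $P_n$ is idempotent with range ${\cal P}_n$, so that $P_n\xi=\xi$ for every $\xi\in{\cal P}_n$ and $P_nP_m=\delta_{mn}P_m$.

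First I would treat the level--$n$ relation $(a^{+}_{j|n})^{*}=a^{-}_{j|n+1}$. Fix $\xi\in{\cal P}_n$ and $\eta\in{\cal P}_{n+1}$. Starting from $\langle a^{+}_{j|n}\xi,\eta\rangle=\langle P_{n+1}X_jP_n\xi,\eta\rangle$, I would replace $P_n\xi$ by $\xi$, move $P_{n+1}=P_{n+1}^*$ onto $\eta$ (where it acts as the identity), transfer $X_j$ to the right factor using its symmetry, and finally reinsert $P_n=P_n^*$ on the left factor; the resulting expression is exactly $\langle\xi,P_nX_jP_{n+1}\eta\rangle=\langle\xi,a^{-}_{j|n+1}\eta\rangle$. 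Since this holds for all such $\xi,\eta$, it establishes the adjoint relation for the restricted operators. The preservation identity $(a^{0}_{j|n})^{*}=a^{0}_{j|n}$ is the same computation with $n+1$ replaced by $n$: for $\xi,\eta\in{\cal P}_n$ one shuttles $X_j$ across and uses $P_n=P_n^*$ twice to obtain $\langle a^{0}_{j|n}\xi,\eta\rangle=\langle\xi,a^{0}_{j|n}\eta\rangle$.

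The global statements then follow by summing over $n$. Since each $Q\in{\cal P}$ has a finite orthogonal decomposition (\ref{finite-decomposition}), the sums defining $a^{\pm}_j,a^{0}_j$ act as finite sums on every vector, so passing to adjoints commutes with the summation. Thus $(a^{+}_j)^{*}=\sum_{n}(a^{+}_{j|n})^{*}=\sum_n a^{-}_{j|n+1}$, and reindexing $m=n+1$ together with the observation that $a^{-}_{j|0}=P_{-1}X_jP_0=0$ (because $P_{-1}=0$) identifies this with $\sum_{m}a^{-}_{j|m}=a^{-}_j$. The preservation case is immediate from $(a^{0}_{j|n})^{*}=a^{0}_{j|n}$.

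I expect no serious obstacle here; the only point requiring a little care is conceptual rather than computational. Because $\langle\,\cdot\,,\,\cdot\,\rangle$ is only a pre--scalar product, an adjoint is a priori determined only modulo operators with zero--norm range, so the identities should be read as the assertion that the specific operators $a^{-}_{j|n+1}$ and $a^{0}_{j|n}$ satisfy the defining adjoint relation on all of ${\cal P}_{n+1}$, respectively ${\cal P}_n$ --- which is precisely what the matrix--element computation verifies. One should also confirm that $P_n$ genuinely restricts to the identity on ${\cal P}_n$, which holds because $P_n$ is idempotent with range ${\cal P}_n$.
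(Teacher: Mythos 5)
Your proposal is correct and follows essentially the same route as the paper: the paper computes $(a^{+}_{j|n})^{*}=(P_{n+1}X_jP_{n})^{*}=P_{n}X_jP_{n+1}=a^{-}_{j|n+1}$ directly at the operator level (using $P_n=P_n^*$ and the symmetry of $X_j$), then sums over $n$ with the reindexing $m=n+1$ and $a^{-}_{j|0}=0$, exactly as you do; your matrix-element computation is just the unfolded form of that operator identity. Your closing remark about adjoints in a pre-Hilbert space being determined only modulo zero-norm operators is a sound point of care that the paper leaves implicit.
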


\begin{proof}
For an arbitrary $j \in D$ and $n\in\mathbb N$ we have
$$
(a^{+}_{j|n})^{*}=(P_{n+1}X_jP_{n})^*=
P_{n}X_jP_{n+1}=a^{-}_{j|n+1}.
$$
Recall that, with the notation (\ref{1df-a-j|n}),
$$
a^{-}_{j|n}=P_{n-1}X_j P_{n}
 \ : \ {\mathcal P}_{n}\longrightarrow {\mathcal P}_{n-1}.
$$
Thus
$$
(a^+_{j})^*=\Big(\sum_{n \in \mathbb{N}} a^{+}_{j|n}\Big)^*
=\sum_{n \in \mathbb{N}}(a^{+}_{j|n})^*
=\sum_{n \in \mathbb{N}}a^{-}_{j|n+1}
$$
and, with the change of variables
$n+1=:m\in{\mathbb N}^*:={\mathbb N}\setminus\{0\}$,
this becomes
$$
(a^+_{j})^*=\sum_{m\in{\mathbb N}^*}a^{-}_{j|m}
=\sum_{n\in{\mathbb N}}a^{-}_{j|n}=a^-_{j}
$$
because
$$
a^{-}_{j|0}=0.
$$
Summing up
\begin{eqnarray*}
&&(a^+_{j})^*=a^-_{j}\: , \qquad (a^-_{j})^*=((a^+_{j})^*)^*=a^+_{j};\\
&&(a^{0}_{j|n})^*=(P_{n}X_jP_{n})^* =a^{0}_{j|n};\\
&&(a^0_{j})^* =\Big(\sum_{n\in{\mathbb N}} a^{0}_{j|n}\Big)^*
=\sum_{n\in{\mathbb N}} (a^{0}_{j|n})^*
=\sum_{n\in{\mathbb N}} a^{0}_{j|n}=a^0_{j}.
\end{eqnarray*}
\end{proof}

\begin{lemma}\label{pres-0-norm-vec}{\rm
For any $j \in D$, the operators
$$
X_j \: , \qquad a^{+}_{j}\: , \qquad a^{-}_{j}\: , \qquad a^{0}_{j}
$$
preserve the space $\mathcal{N}_{\varphi}$ of zero norm vectors.
}\end{lemma}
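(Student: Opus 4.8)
The plan is to reduce everything to two ingredients: the fact, already recorded in the paper, that $\mathcal{N}_\varphi$ is a $*$--ideal of $\mathcal{P}$, so that left multiplication operators automatically preserve it, and the fact that each orthogonal projector $P_n$ preserves $\mathcal{N}_\varphi$. Since $a^+_j$ and $a^0_j$ are, on any fixed vector, finite sums of compositions of these two types of operators, the conclusion follows by bookkeeping.

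The case of $X_j$ is immediate. By (\ref{pres-0-norm-el}) one has $\mathcal{P}\mathcal{N}_\varphi\subseteq\mathcal{N}_\varphi$, i.e. $\mathcal{N}_\varphi$ is an ideal; since $X_j$ acts by left multiplication by an element of $\mathcal{P}$, we get $X_j\mathcal{N}_\varphi\subseteq\mathcal{N}_\varphi$ with no further argument.

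The real step is to show that each $P_n$ maps $\mathcal{N}_\varphi$ into itself. First I would observe that $\mathcal{N}_\varphi$ is exactly the radical of the pre--scalar product: if $\xi\in\mathcal{N}_\varphi$, so that $\langle\xi,\xi\rangle=0$, then the Schwartz inequality for the positive semidefinite form $\langle\,\cdot\,,\,\cdot\,\rangle$ forces $\langle\xi,\eta\rangle=0$ for every $\eta\in\mathcal{P}$. Using the self--adjointness and idempotency $P_n=P_n^*$, $P_n^2=P_n$ supplied by (\ref{mut-orth-PFn}), one then computes
$$
\langle P_n\xi,\,P_n\xi\rangle=\langle\xi,\,P_n^*P_n\xi\rangle=\langle\xi,\,P_n\xi\rangle=0,
$$
so $P_n\xi\in\mathcal{N}_\varphi$.

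Finally I would assemble the result. By Theorem \ref{th-Q--dec} the operators $a^+_j=\sum_n P_{n+1}X_jP_n$ and $a^0_j=\sum_n P_nX_jP_n$ reduce, on any given element of $\mathcal{P}$, to finite sums of operators of the form $P_mX_jP_n$. For $\xi\in\mathcal{N}_\varphi$, applying $P_n$ keeps us in $\mathcal{N}_\varphi$ (the projector step), then $X_j$ keeps us in $\mathcal{N}_\varphi$ (the ideal step), then $P_m$ keeps us in $\mathcal{N}_\varphi$ again; since $\mathcal{N}_\varphi$ is a subspace the finite sum remains in $\mathcal{N}_\varphi$, giving $a^+_j\mathcal{N}_\varphi\subseteq\mathcal{N}_\varphi$ and $a^0_j\mathcal{N}_\varphi\subseteq\mathcal{N}_\varphi$. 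I do not expect a genuine obstacle here: the only point demanding care is that we are working with a possibly degenerate pre--scalar product, so the passage from $\langle\xi,\xi\rangle=0$ to $\langle\xi,\eta\rangle=0$ must invoke the semidefinite Schwartz inequality rather than any nondegeneracy; once that is in place, the projector step is a one--line computation and the rest is purely formal.
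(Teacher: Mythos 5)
Your proof is correct, and it is organized around a different key lemma than the paper's. The paper argues vector--wise on each homogeneous component: it first shows (by Cauchy--Schwarz and the symmetry of $X_j$) that $\xi\in\mathcal{P}_n$ zero--norm implies $X_j\xi$ zero--norm, and then, instead of anything about projectors, it invokes the quantum decomposition (\ref{q-dec-Xj}) to write the zero--norm vector $X_jP_n\xi$ as $a^{+}_{j|n}\xi+a^{0}_{j|n}\xi+a^{-}_{j|n}\xi$, a sum of three vectors lying in the mutually orthogonal spaces $\mathcal{P}_{n+1},\mathcal{P}_n,\mathcal{P}_{n-1}$; positivity then forces each summand to have zero norm (a Pythagoras argument needing no second use of Cauchy--Schwarz). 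Your substitute for that step --- the computation $\langle P_n\xi,P_n\xi\rangle=\langle\xi,P_nP_n\xi\rangle=\langle\xi,P_n\xi\rangle=0$, using $P_n=P_n^*$ and $P_nP_n=P_n$ from (\ref{mut-orth-PFn}) together with the fact that zero--norm vectors are orthogonal to everything --- isolates a general pre--Hilbert space fact (adjointable self--adjoint idempotents preserve the radical) that makes no reference to the gradation, and then everything follows by composing radical--preserving maps, since by Theorem \ref{th-Q--dec} the operators $a^{+}_j$ and $a^{0}_j$ act on any fixed vector as finite sums of terms $P_mX_jP_n$. Your version is more modular and handles an arbitrary $\xi\in\mathcal{N}_\varphi$ directly, whereas the paper implicitly reduces to homogeneous $\xi\in\mathcal{P}_n$ (legitimate, because the $\mathcal{P}_n$ are mutually orthogonal so $\mathcal{N}_\varphi$ splits along the gradation, but this reduction is left unstated); the paper's version, in exchange, ties the statement directly to the three--term structure of the symmetric Jacobi relation and uses only positivity in its second step. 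Both proofs ultimately rest on the same two ingredients: Cauchy--Schwarz plus symmetry for the multiplication step, and the orthogonality of the gradation for the projection step.
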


\begin{proof}
It is sufficient to show that,
for each $n\in\mathbb N$  if $\xi\in\mathcal P_{n}$
is a zero norm vector, then the same is true for the vectors
$$
X_j\xi \qquad ; \qquad a^{+}_{j|n}\xi \: , \qquad
a^{0}_{j|n}\xi\: , \qquad a^{-}_{j|n}\xi \: , \qquad j\in D .
$$
That $X_j\xi$ is a zero norm vector follows from
$$
\left| \langle X_j\xi,X_j\xi \rangle \right|
=\left| \langle X_j^2\xi,\xi \rangle \right|
\leq \left| \langle X_j^2\xi,X_j^2\xi \rangle \right|^{1/2}
\:  \left| \langle \xi,\xi \rangle \right|^{1/2} =0.
$$
From this and the quantum decomposition (\ref{q-dec-Xj})
it follows that the vector
$$
X_{j}P_{n}\xi = a^{+}_{j|n}\xi + a^{0}_{j|n}\xi + a^{-}_{j|n}\xi
$$
has zero norm.
Since the right hand side is a sum of three mutually
orthogonal vectors, it follows that each of them is
a zero norm vector.
\end{proof}

\begin{lemma}\label{Pk-pol-dk}{\rm
In the notations of Definition \ref{df-3d-dec}, for $n\in\mathbb N$, let be given:
\begin{description}
\item[(i)]  two monic vector sub--spaces in the coordinates $(X_j)$
$\mathcal P_{n-1}\subset \mathcal{P}_{n-1]},\;
\mathcal P_{n}\subset \mathcal P_{n]}$ respectively of degree $n-1$ and $n-1$,
\item[(ii)] two arbitrary linear maps
\begin{equation}\label{a0v|k-a}
v\in \mathbb C^d \longmapsto
A^0_{v|n}\in\mathcal L_a(\mathcal P_{n}, \mathcal P_{n})
\end{equation}
\begin{equation}\label{a-v|k-a}
v\in \mathbb C^d \longmapsto
A^{-}_{v|n}\in\mathcal L_a(\mathcal P_{n}, \mathcal P_{n-1}).
\end{equation}
\end{description}
Then, defining for any $v\in\mathbb C^d$ the map
\begin{equation}\label{df-a+v|k1-a}
A^+_{v|n} :=
X_v\Big|_{\mathcal P_{n}} \ - \ A^0_{v|n} \ - \ A^{-}_{v|n},
\end{equation}
the vector space
\begin{equation}\label{constr-df-tPk+1}
\tilde{\mathcal P}_{n+1} := \{A^+_{v|n}\mathcal P_{n}  \, ; \;  \ v\in \mathbb C^d \}
\end{equation}
has the form
\begin{equation}\label{constr-df-Pk+1b}
\tilde{\mathcal P}_{n+1}
= \mathcal P_{n+1} \dot + \left(\tilde {\mathcal P}_{n+1}\cap \mathcal P_{n]}\right)
\end{equation}
where $\mathcal P_{n+1}$ is a monic vector sub--space of degree $n+1$ and
$\dot +$ denotes direct sum of linear spaces.
}\end{lemma}

\begin{proof}
Since $\mathcal{P}_n$ is monic of degree $n$ in the coordinates $(X_j)$, it has a linear basis
$B_n:=(\xi_{n,M})_{M\in \mathcal M_{e,n}}$ which is a perturbation
of the monomial basis of degree $n$. From the definition (\ref{df-a+v|k1-a}) of $A^+_{v|n}$
we know that, for each $j\in D$ and $M\in \mathcal M_{e,n}$, one has
\begin{equation}\label{3.14a}
A^{+}_{j|n}\xi_{n,M}=X_j\xi_{n,M}-A^0_{j|n}\xi_{n,M}-A^-_{j|n}\xi_{n,M}.
\end{equation}
The assumptions on $A^0_{j|n}$ and $A^-_{j|n}$ imply that
$A^0_{j|n}\xi_{n,M}+A^-_{j|n}\xi_{n,M}$ is a polynomial of degree $\le n$.
Therefore, when $\xi_{n,M}$ varies in $B_n$ and
$X_j$ varies among all coordinate functions,
$A^+_{j|n}\xi_{n,M}$ defines a set of monic polynomials whose leading
terms contain the set of all monomials of degree $n+1$ (with possible
repetitions). Therefore from this set one can extract a perturbation of
the monomial basis of order $n+1$. Denote by $B_{n+1}$ this basis and
$\mathcal{P}_{n+1}$ its linear span. By construction $\mathcal{P}_{n+1}$
is a monic vector sub--space of $\mathcal P_{n+1]}$.
The definition of perturbation of a monomial basis implies that
\begin{equation}\label{dir-sum1}
\mathcal{P}_{n+1} \cap \left(\tilde{\mathcal P}_{n+1} \cap \mathcal{P}_{n]} \right) = \{ 0 \}
\end{equation}
because the non-zero elements of the space $\mathcal{P}_{n+1}$
are polynomials of degree $n+1$.
Let us prove that the identity (\ref{constr-df-Pk+1b}) holds.
To this goal it will be sufficient to prove that the set
$$
\left\{ A^{+}_{j|n}\xi_{n,M} \ ; \  \xi_{n,M}\in B_{n}\right\}
$$
is contained in the left hand side of (\ref{dir-sum1}).
By construction $\mathcal{P}_{n+1}$ contains $B_{n+1}$. Let $\xi_{n, M}\in B_{n}$
be such that
$$
A_{j|n}^+ \xi_{n, M} = X_j M +  Q_{n]} \notin B_{n+1}.
$$
Since $B_{n+1}$ is a perturbation of the monomial basis of order $n+1$
in the $(X_j)$--coordinates
there exists $k \in D$ and $M^\prime \in \mathcal{M}_{e,n}$ such that
$$
A_{k|n}^+ \xi_{n, M^\prime} = X_k M^\prime + R_{n]} \in B_{n+1}
$$
($ R_{n]}$ is a polynomial of degree $\le n$) and
$$
X_k M^\prime = X_j M .
$$
It follows that
$$
A_{j|n}^+ M - A_{k|n}^+ M^\prime  \in \mathcal{P}_{n]} \cap \tilde{\mathcal P}_{n+1}.
$$
Therefore
$$
A_{j|n}^+ M = A_{k|n}^+ \xi_{n, M^\prime}
+ \left(A_{j|n}^+ M - A_{k|n}^+ M^\prime \right) \in \mathcal P_{n+1} \dot +
\left(\tilde{\mathcal P}_{n+1} \cap \mathcal{P}_{n]} \right).
$$
This proves (\ref{constr-df-Pk+1b}).
\end{proof}

{\bf Remark}.
The vector space sum  $\tilde{\mathcal P}_{n+1} + \mathcal{P}_{n]}$ is not direct.
However the vector space sum  $\mathcal P_{n+1} + \mathcal{P}_{n]}$ is direct and one has
$$
\tilde{\mathcal P}_{n+1} + \mathcal{P}_{n]}
=\mathcal P_{n+1} \dot + \mathcal{P}_{n]}.
$$

\noindent {\bf Remark}.
If the operators $A^{\varepsilon}_{v|n}$ are the CAP operators associated to a given
state on  $\mathcal{P}$, then the sub--space $\mathcal{P}_{n]} \cap \tilde{\mathcal P}_{n+1}$
necessarily consists of zero--norm vectors because, in this case, operators in the
spaces $A^{+}_{v|n}\mathcal P_{n}$ are orthogonal to $\mathcal{P}_{n]}$.

\subsection{Commutation relations}\label{Comm-rels}

In this section we briefly recall some known facts
about commutation relations canonically associated to
orthogonal polynomials (see \cite{[AcNh02]}, \cite{[AcKuoSt04b]}) which
will be used in the following section.
We refer the reader to \cite{[AcKuoSt04b]} for more detailed analysis.

\begin{theorem}\label{thm-com-rel}{\rm
Let be given:\\
-- a pre--Hilbert space $H$;\\
-- an orthogonal gradation of $H$:
$$
H=\bigoplus_{n\in\mathbb N}H_{n};
$$
-- a family of operators
$a^\pm_{j}:H_{n}\to H_{n\pm 1}$, $a^0_{j}:H_{n}\to H_{n}$,
($j\in\{1,\cdots ,d\}$)
$$
a^0_{j}=(a^0_{j})^* \qquad ;\qquad a^-_{j}=(a^+_{j})^*
\qquad ; \qquad j\in\{1,\cdots ,d\}.
$$
Define the operators $Y_j$ ($j\in\{1,\cdots ,d\}$) on $H$ by
\begin{equation}\label{df-Yj}
Y_j := a^+_{j} + a^0_{j} + a^-_{j}
\: , \qquad j\in\{1,\cdots ,d\}.
\end{equation}
Then the decomposition (\ref{df-Yj}) is unique and
the operators $Y_j$ commute on the algebraic linear span of the $H_{n}$
if and only if the operators $a^+_{j}$, $a^0_{j}$, $a^-_{j}$ satisfy
the following commutation relations on the same domain:
for all $j,k\in\{1,\cdots ,d\}$ such that $j<k$
\begin{equation}\label{creat-comm1}
[a^+_{j},a^+_{k}] = 0
\end{equation}
\begin{equation}\label{comm+j-k0j0k1}
[a^+_{j},a^-_{k}] + [a^0_{j},a^0_{k}] + [a^-_{j},a^+_{k}] = 0
\end{equation}
\begin{equation}\label{comm+j0k1}
[a^+_{j},a^0_{k}] + [a^0_{j},a^+_{k}] = 0
\end{equation}
}\end{theorem}

\begin{proof}
Clearly the operators $a^+_{j}$, $a^0_{j}$ are well defined on the
algebraic linear span of the $H_{n}$ and leave this domain invariant.
Given (\ref{df-Yj}) one has, for each $j,\: k\in\{1,\cdots ,d\}$:
\begin{eqnarray}\label{comm-Xj-1}
  0 =  [Y_j,Y_k] &=& [(a^+_{j} + a^0_{j} + a^-_{j}),(a^+_{k} + a^0_{k} + a^-_{k})] \\
                 &=& [a^+_{j},a^+_{k}] + [a^+_{j},a^0_{k}] + [a^0_{j},a^+_{k}] +[a^+_{j},a^-_{k}] \nonumber \\
                 &+& [a^0_{j},a^0_{k}] + [a^-_{j},a^+_{k}] + [a^0_{j},a^-_{k}] + [a^-_{j},a^0_{k}] + [a^-_{j},a^-_{k}] \nonumber
\end{eqnarray}
The mutual orthogonality of the $H_k$'s and the properties of the
$a^\epsilon_{k}$ imply that the commutativity of the $Y_j$s, is equivalent
the fact the expressions on different rows of the right hand side of
(\ref{comm-Xj-1}) are separately equal to zero.
Since the $5$--th row is the adjoint of the first one and the $4$--th row
is equivalent to the adjoint of the second one, the vanishing of all the
rows is equivalent to (\ref{creat-comm1}), (\ref{comm+j-k0j0k1}),
(\ref{comm+j0k1}) for all $j,k\in\{1,\cdots ,d\}$. But this is equivalent to
the validity of these relations for all $j,k\in\{1,\cdots ,d\}$ such that $j<k$
because all the relations are identically satisfied for $j=k$ and, exchanging the
roles of $j$ and $k$, the left hand sides of (\ref{creat-comm1}),
(\ref{comm+j-k0j0k1}) are transformed into its opposite and that of
(\ref{comm+j0k1}) remains unaltered.\\
Finally the uniqueness of the decomposition (\ref{df-Yj}) is established as in the
proof of Theorem \ref{th-Q--dec}.
\end{proof}

\section{Orthogonal polynomials and symmetric interacting Fock spaces}\label{Orth-pol-sym-IFS}

The notion of symmetric interacting Fock space is discussed in
Appendix \ref{App-S-IFS} below and in this section we will use freely the
definitions and notations of this appendix. The following theorem shows 
that orthogonal polynomial gradations define a very special sub--class 
of {\bf symmetric interacting Fock spaces}.

\begin{theorem}\label{Orth-pol-def-sym-IFS}{\rm
Let $\varphi$ be a state on $\mathcal P$ and let
${\mathcal P}=\bigoplus_{n\in{\mathbb N}}{\mathcal P}_{n}$ its orthogonal
polynomial gradation. Denote:\\
-- for $n\in\mathbb N$, $\langle \ \cdot \ ,\ \cdot \ \rangle_{n}$
the restriction on $\mathcal P_n$ of the pre--scalar product
$\langle \ \cdot \ ,\ \cdot \ \rangle$ induced by $\varphi$ on $\mathcal P$;\\
-- for $j\in D$
\begin{equation}\label{q-dec-reconstr}
X_{j}=a^{+}_{j}+a^{0}_{j}+a^{-}_{j}
\end{equation}
the quantum decomposition of the coordinate $X_{j}$ with respect to $\varphi$;\\
-- $a^{+}:v=\sum_{j\in D}v_{j}e_{j}\in\mathbb C^d\to a^{+}_{v}:=\sum_{j\in D}v_{j}a^{+}_{j}
\in \mathcal L_a(\mathcal P,\langle \ \cdot \ ,\ \cdot \ \rangle)$
the creation map.\\
Then the pair    
\begin{equation}\label{df-PIFS}
\left( (\mathcal P_n, \langle \ \cdot \ ,\ \cdot \ \rangle_{n})_{n\in\mathbb N} ,a^+\right)
\end{equation}
is a symmetric interacting Fock space with the following properties:\\

(i) The restriction on $\mathcal P_{\mathbb R}$ of the pre--scalar product
$\langle \ \cdot \ ,\ \cdot \ \rangle$ is real valued and
there exists a family of gradation preserving self--adjoint operators 
$a^{0}_{j}:\mathcal P\cdot \Phi_0\to \mathcal P\cdot \Phi_0 $ ($j\in D$)
such that
\begin{equation}\label{aej-pres-PR}
a^{\varepsilon}_{j}(\mathcal P_{\mathbb R}\cdot \Phi_0) \subseteq
\mathcal P_{\mathbb R}\cdot \Phi_0
\: ,\qquad \forall \varepsilon\in \{+,0,-\} \ , \ j\in D ,
\end{equation}
and the coordinate operators $X_j$ mutually commute;\\
 (ii) the vacuum vector $\Phi$ of the IFS (\ref{df-PIFS}) (identified
to the vector $\Phi_0\in \mathcal P\cdot \Phi_0$ ) is cyclic for the
polynomial algebra generated by the family (\ref{q-dec-reconstr}).\\

\noindent Conversely, given a symmetric interacting Fock space on $\mathbb C^d$
$$
\left( (\hat{\mathcal P}_{n},
\langle \ \cdot \ ,\ \cdot \ \rangle_{IFS,n}), \hat a^+\right)
$$
and a family of gradation preserving operators $\hat {a}^{0}_j$ ($j\in D$)
such that the operators
\begin{equation}\label{comm-coord}
\hat X_j:=\hat a^{+}_{j}+\hat a^{0}_{j}+(\hat a^{+}_{j})^*
\qquad ;\qquad j\in D
\end{equation}
commute and, denoting $\hat{\mathcal P}$, (resp. $\hat{\mathcal P}_{\mathbb R}$)
the $*$--algebra (resp. real $*$--algebra) generated by the $\hat X_j$
conditions (i) and (ii) above are satisfied.\\
Then there exists a unique state $\varphi$ on $\mathcal P$ characterized by
the property that for all maps $n:D\to\mathbb N$, denoting $\Phi$
the vacuum vector of $\hat{\mathcal P}$, one has:
\begin{equation}\label{fXjn=fldn}
\varphi (X_{1}^{n_1}\cdots X_{d}^{n_d})
=\langle \Phi, \hat X_{1}^{n_1}\cdots \hat X_{d}^{n_d}\Phi\rangle
\qquad ;\qquad \forall n_1,\dots n_d\in\mathbb N 
\end{equation}
Moreover
the expectation values (\ref{fXjn=fldn}) are real valued.\\
In particular, there is a symmetric IFS isomorphism (see Definition
(\ref{df-SIFSS}))
$$
U \ : \ \left( (\mathcal P_{n},
\langle \ \cdot \ ,\ \cdot \ \rangle_{n}),a^+\right)
\to \left( (\hat{\mathcal P}_{n},
\langle \ \cdot \ ,\ \cdot \ \rangle_{IFS,n}),\hat a^+\right)
$$
preserving the real structures of both spaces and such that
\begin{equation}\label{IFS-coord}
X_{j} = U^*\hat{a}^{+}_jU+U^*\hat{a}^{0}_jU+(U^*\hat{a}^{+}_jU)^*
\end{equation}
is the quantum decomposition of the $X_{j}$ with respect to $\varphi$.
}\end{theorem}

\begin{proof}
Let $\varphi$ be a state on $\mathcal P$ and let $a^{\varepsilon}$ be the
associated CAP operators. Let us first prove the pair (\ref{df-PIFS})
satisfies the conditions of Definition \ref{df-IFS}.
We know that $\mathcal P_0$ is $1$--dimensional with the scalar product uniquely
determined by the condition $\|\Phi_0\|=1$. Lemma (\ref{(a+j*=a-j}) implies that
$a^+$ is adjointable. Finally
$$
\mathcal P_{n+1}=P_{n+1}\mathcal P\cdot\Phi_0=(P_{n+1]}-P_{n]})\mathcal P\cdot\Phi_0
=(P_{n+1]}-P_{n]})P_{n+1]}\mathcal P\cdot\Phi_0
=P_{n+1}\mathcal P_{n+1]}\cdot\Phi_0
$$
and $\mathcal P_{n+1]}\cdot\Phi_0$ is the complex linear span of the set
$\left\{X_v\mathcal P_{n]} \ : \ v\in \mathbb R^d\right\}$. Therefore, to verify
condition (\ref{prop-creat}) of Definition (\ref{df-IFS}), it is
sufficient to prove that $a^+(V)\mathcal P_{n}$ contains
$\left\{P_{n+1}X_v\mathcal P_{n]}\cdot\Phi_0 \ : \ v\in \mathbb R^d\right\}$.
This follows from the symmetric Jacobi relations because for any $v\in \mathbb R^d$:
$$
P_{n+1}X_vP_{n]}
=P_{n+1}X_v(P_{n} + P_{n-1]})
=P_{n+1}X_vP_{n} =a^+_{v|n}.
$$
Thus $\left( (\mathcal P_n, \langle \ \cdot \ ,\ \cdot \ \rangle_{n}),a^+\right)$
is an IFS. That it is a symmetric IFS follows from Definition (\ref{df-SIFSS})
and the commutativity of the creators, established in section (\ref{Comm-rels}).
Property (i) follows from the quantum decomposition of the coordinates.
Property (ii) holds by definition of $\mathcal P$.\\

\noindent Conversely, let $\left( (\hat{\mathcal P}_{n},
\langle \ \cdot \ ,\ \cdot \ \rangle_{IFS,n}),\hat a^+\right)$
be an interacting Fock space on $\mathbb C^d$ and suppose that
conditions (i) and (ii) above are satisfied in the sense specified in the
statement of the theorem.
Then, since the operators
\begin{equation}\label{comm-coord2}
\hat{X}_j:=a^{+}_{j}+a^{0}_{j}+(a^{+}_{j})^* \: ,\qquad j\in D,
\end{equation}
are self--adjoint, property (i) implies that the complex $*$--algebra
$\hat{\mathcal P}$ generated by them is commutative.\\
Since $\mathcal P$ is isomorphic to the free abelian $*$--algebra with
identity and $d$ self--adjoint generators, there exists a $*$--algebra
homomorphism $\pi:\mathcal P\to\hat{\mathcal P}$ characterized by the
property that
$$
\pi (X_j):=\widehat X_j =  \hat{a}^{+}_j+\hat{a}^{0}_j+(\hat{a}^{+}_j)^*
\: ,\qquad j\in D.
$$
Denoting $\varphi_F$ the restriction of the Fock state
$\langle\Phi \ \cdot  \ ,  \  \cdot \ \Phi\rangle$ on $\hat{\mathcal P}$,
define the state $\varphi$ on $\mathcal P$ by
\begin{equation}\label{fi:=fiF-circ-pi}
\varphi:=\varphi_F\circ \pi .
\end{equation}
Then (\ref{fXjn=fldn}) holds by construction.
Since the monomials are linearly independent in $\mathcal P$, for any map
$n:D\to\mathbb N$, the map
$$
X_{1}^{n_1}\cdots X_{d}^{n_d}\Phi_0 \ \mapsto \
\widehat X_1^{n_1}\cdots \widehat X_{d}^{n_d}\Phi
$$
can be extended to a linear map $U:\mathcal P\cdot \Phi_0 \to
\hat{\mathcal P}\cdot \Phi$ which is onto by condition (ii).
(\ref{fXjn=fldn}) implies that this extension preserves scalar products,
therefore $U$ is a unitary isomorphism of pre--Hilbert spaces. It preserves the
real structure of the corresponding spaces because of condition (i).
Moreover $U$ satisfies, for $j \in D$,
\begin{eqnarray}
X_{j} &=& U^*(\hat{a}^{+}_j+\hat{a}^{0}_j+(\hat{a}^{+}_j))^*U \nonumber \\
   &=&  U^*\hat{a}^{+}_jU+U^*\hat{a}^{0}_jU+U^*(\hat{a}^{+}_j)^*U \nonumber \\
   &=&  U^*\hat{a}^{+}_jU+U^*\hat{a}^{0}_jU+(U^*\hat{a}^{+}_jU)^* \label{1st-qdec-Xj}
\: ,\qquad j\in D
\end{eqnarray}
which implies
$$
\mathcal P_{n]}=U^*\hat{\mathcal P}_{n]}U
\: ,\qquad n\in \mathbb N .
$$
Therefore, since $U$ is unitary,
$$
\mathcal P_{n}=\mathcal P_{n-1]}^\perp\cap\mathcal P_{n]}
=U^*\hat{\mathcal P}_{n-1]}^\perp U\cap U^*\hat{\mathcal P}_{n]}U
=U^*\hat{\mathcal P}_{n}U
\: , \qquad n\in \mathbb N .
$$
Denote $X_{j}=a^{+}_{j}+a^{0}_{j}+(a^{+}_{j})^*$ the quantum decomposition
of the $X_{j}$ associated to the state $\varphi$ defined by
(\ref{fi:=fiF-circ-pi}). Then (\ref{1st-qdec-Xj}) implies that
$$
X_{j}=a^{+}_{j}+a^{0}_{j}+(a^{+}_{j})^*
= U^*\hat{a}^{+}_jU+U^*\hat{a}^{0}_jU+(U^*\hat{a}^{+}_jU)^*
$$
and the operators $a^{\pm}_{j}$ (resp. $a^{0}_{j}$) and
$U^*\hat{a}^{\pm}_{j}U$ (resp. $U^*\hat{a}^{0}_jU$) are of degree
$\pm 1$ (resp. $0$) with respect to the same orthogonal gradation.
From the uniqueness of the quantum decomposition
(see Theorem \ref{thm-com-rel}) we conclude that
$$
a^{\pm}_{j}=U^*\hat{a}^{\pm}_{j}U
\: , \qquad a^{0}_{j}=U^*\hat{a}^{0}_jU
\: , \qquad j\in D.
$$
Thus $U$ is an isomorphism of IFS.
Since the $X_j$ commute, we know from Theorem (\ref{thm-com-rel}) that
the operators $\hat{a}^+_{j}$ mutually commute so that the IFS is symmetric
(see Definition \ref{df-SIFSS}).
\end{proof}

Theorem (\ref{Orth-pol-def-sym-IFS}) motivates the following definition.

\begin{definition}\label{comm-coord3}{\rm
Let $\left( (\hat{\mathcal P}_{n},
\langle \ \cdot \ ,\ \cdot \ \rangle_{IFS,n}),\hat{a}^+\right)$ be an
interacting Fock space on $\mathbb C^d$. A family of gradation preserving
self--adjoint operators
$a^{0}_{j}:\hat{\mathcal P}_{n}\to \hat{\mathcal P}_{n}$ ($j\in D$)
is said to define a {\bf $3$--diagonal structure} on
$\left( (\hat{\mathcal P}_{n},
\langle \ \cdot \ ,\ \cdot \ \rangle_{IFS,n}),\hat{a}^+\right)$
if the operators $\hat{X}_j$, defined by (\ref{comm-coord1}), satisfy
conditions (i) and (ii) of the second part of Theorem
(\ref{Orth-pol-def-sym-IFS}).
}\end{definition}

{\bf Remark}.
From the Remark after Theorem \ref{Orth-pol-def-sym-IFS} it follows that
an interacting Fock space with a $3$--diagonal structure is necessarily
symmetric. Therefore, by Lemma \ref{df-sym-IFS}, we can identify it,
up to isomorphism, to its symmetric tensor representation (see
Lemma \ref{df-sym-IFS}).\\

\noindent{\bf Remark}.
The assignment of a gradation preserving self--adjoint operator
$a^{0}_{j}:\mathcal P\to \mathcal P$ ($j\in D$) is equivalent to the
assignment of a sequence of self--adjoint operators
$a^{0}_{j|n}:\mathcal P_{n}\to \mathcal P_{n}$ ($n\in\mathbb N$).

\begin{definition}\label{df-pair-Omn-a0jn}{\rm
Let be given a finite dimensional vector space $V$, and
a sequence\\ $\tilde\Omega^{\widehat\otimes}
:=(\tilde\Omega^{\widehat\otimes}_{n})$,
inductively defined as in Theorem \ref{equiv-SIFS-seq-PDkern}.\\
Let $\Gamma(V,\tilde\Omega):=((V^{\widehat\otimes n},\langle \ \cdot
 \ ,  \ \cdot \ \rangle_{n}),\ell^*)$ be the symmetric IFS on
$V$ associated to the pair $(V \ , \ (\tilde\Omega^{\widehat\otimes}_{n}))$
according to Theorem \ref{equiv-SIFS-seq-PDkern} and let, for each
$n\in\mathbb N$ and $j\in D$,\\
$a^{0}_{j|n}:(V^{\widehat\otimes n},\langle \ \cdot
 \ ,  \ \cdot \ \rangle_{n})\to (V^{\widehat\otimes n},\langle \ \cdot
 \ ,  \ \cdot \ \rangle_{n})$
be a sequence of self--adjoint operators.\\
The pair $(\tilde\Omega^{\widehat\otimes} \ , \ (a^{0}_{j|n}))$
is said to induce a $3$--diagonal structure on $\Gamma(V,\tilde\Omega)$,
if the family of gradation preserving self--adjoint operators
$a^{0}_{j}:\Gamma(V,\tilde\Omega)\to \Gamma(V,\tilde\Omega)$ ($j\in D$)
is a $3$--diagonal structure on $\Gamma(V,\tilde\Omega)$ in the sense
of Definition \ref{comm-coord}.
}\end{definition}

\begin{theorem}\label{equiv-seq-PDkern3d-struc}{\rm
In the notations of Theorem \ref{Orth-pol-def-sym-IFS} and of Definition
\ref{df-pair-Omn-a0jn}, any state $\varphi$ on $\mathcal P$
uniquely defines a pair $(\tilde\Omega^{\widehat\otimes} \ , \ (a^{0}_{j|n}))$
that induces a $3$--diagonal structure on $\Gamma(V,\tilde\Omega)$.\\
Conversely, any pair $(\tilde\Omega^{\widehat\otimes} \ , \ (a^{0}_{j|n}))$
that induces a $3$--diagonal structure on
$\Gamma(V,\tilde\Omega^{\widehat\otimes})$
uniquely defines a state $\varphi$ on $\mathcal P$.
}\end{theorem}

\begin{proof}
 Both statements are immediate consequences of the corresponding
statements in Theorem \ref{Orth-pol-def-sym-IFS}.
\end{proof}

{\bf Remark}. Theorem \ref{equiv-seq-PDkern3d-struc} implies that
the (standard) interacting Fock spaces on $\mathbb C^d$ of the form
\begin{equation}\label{tens-repr-sym-IFS-V2}
\left\{ \left(V^{\widehat\otimes  n} \ ,  \
\langle \ \cdot \ , \Omega_n \ \cdot \ \rangle_{\widehat\otimes, n}\right)
 \ , \ \hat\ell^* \right\}
\end{equation}
with a $3$--diagonal structure provide a universal model for the theory
of orthogonal polynomials in $d$ variables.\\

{\bf Remark}.
From section \ref{Comm-rels}
we know that the operators (\ref{comm-coord}) commute if and only if the
relations (\ref{creat-comm1}), (\ref{comm+j-k0j0k1}), (\ref{comm+j0k1})
hold. On the other hand from Theorem \ref{equiv-IFS-seq-PDkern} we know
that IFS on $\mathbb C^d$ are characterized by sequences of PD kernels
on $\mathbb C^d$ and, from the identity (\ref{n-IFS-PD-kern}) we know
that these PD kernels have the form $a^-(u)a^+(v)$ ($u,v\in\mathbb C^d$).
Since products of this form appear in the commutators in
(\ref{creat-comm1}), (\ref{comm+j-k0j0k1}), (\ref{comm+j0k1}),
it follows that these commutation relations create constraints between
the kernels
defining the scalar products in the IFS and the operators $a^{0}_{j}$.
In the following section we will investigate these constraints.\\

\section{Implications of the commutation relations}\label{Impl-comm-rels}

With the notations (\ref{1df-a+j|n}), (\ref{1df-a0j|n}), (\ref{1df-a-j|n}),
the tri--diagonal relation (\ref{Symm-Jac-rel}) takes the form
$$
X_jP_{n}= a^+_{j|n} + a^0_{j|n} + a^-_{j|n}
\: , \qquad \forall j\in D \ , \ \forall n\in{\mathbb N},
$$
or equivalently, due to Lemma \ref{(a+j*=a-j}
\begin{equation}\label{Symm-Jac-rel-ind}
a^+_{j|n} = X_jP_{n} - a^0_{j|n} - (a^+_{j|n-1})^*
\: , \qquad \forall j\in D \ , \ \forall n\in{\mathbb N}.
\end{equation}
This can be interpreted as an inductive relation that, given
$a^+_{j|n-1}$ ($j\in D$), the scalar product on $\mathcal P_n$ and $a^0_{j|n}$,
uniquely defines $a^+_{j|n}$. Notice that, if $a^0_{j|n}$
is chosen to be a pre--Hilbert space operator, in particular mapping
zero norm vectors into zero norm vectors, and if it maps real vectors in
$\mathcal P_n$ into real vectors, then $a^+_{j|n-1}$ will
have the same properties because $X_j$ has these properties
and $(a^+_{j|n-1})^*$ has these properties by the induction construction.\\
In this section we will establish the constraints, imposed by the
commutation relations, on the objects that define the induction relation,
namely the $a^+_{j|n-1}$ ($j\in D$), the scalar product on
$\mathcal P_n$ and the $a^0_{j|n}$.\\

{\bf Remark}.
Recall that, if $A$ is an adjointable operator on a pre--Hilbert space, then
its real and imaginary parts are defined by
\begin{equation}\label{df-ReA-ImA}
A = \frac{1}{2}(A+A^*) + \frac{1}{2}(A-A^*)
=: \hbox{Re}(A)+i\hbox{Im}(A).
\end{equation}
Similarly, for any PD kernel $\tilde\Omega$ one has
$$
\tilde\Omega(e_j,e_k)^* = \tilde\Omega(e_k,e_j)\qquad\qquad\qquad\qquad\qquad\qquad\qquad
$$
therefore
\begin{equation}\label{df-OmR-OmI}
\tilde\Omega(e_j,e_k)
= \frac{1}{2}((\tilde\Omega(e_j,e_k))+\tilde\Omega(e_j,e_k)^* )
+ \frac{1}{2}((\tilde\Omega(e_j,e_k))-\tilde\Omega(e_j,e_k)^* )
\end{equation}
$$
= \frac{1}{2}((\tilde\Omega(e_j,e_k))+\tilde\Omega (e_k,e_j))
+ \frac{1}{2}((\tilde\Omega(e_j,e_k))-\tilde\Omega (e_k,e_j))
=: \tilde\Omega_R (e_j,e_k) + \tilde\Omega_I (e_j,e_k)
$$
with
$$
\tilde\Omega_R (e_j,e_k)  = \tilde\Omega_R (e_k,e_j)= \tilde\Omega_R (e_j,e_k)^*
\: , \qquad
-\tilde\Omega_I (e_j,e_k) = \tilde\Omega_I (e_k,e_j)= \tilde\Omega_I (e_j,e_k)^* .
$$
Thus any PD kernel $\tilde\Omega$ is the sum of a symmetric kernel
and a symplectic kernel.\\

\noindent In this section we will use the notations (\ref{1df-a+j|n}),
(\ref{1df-a0j|n}),
(\ref{1df-a-j|n}) and in the following $(\tilde\Omega_{n})$ will denote
the sequence of positive definite (PD) kernels defined by
$\tilde\Omega_{0}=1\in \mathbb C$ and
\begin{equation}\label{df-tilde-Omega(n+1)}
\tilde\Omega_{n+1}(e_j,e_k):=(a^-_{j}a^+_{k})_{|n}:=(a^+_{j|n})^*a^+_{k|n}
\: , \qquad \forall n\in\mathbb N \ , \ \forall j,k\in D.
\end{equation}
Since the operators $a^+_{k|n}$ map real polynomials into real polynomials,
it follows that also the operators $\tilde\Omega_{n}$ have this property.
By linearity this is equivalent to say that the $a^+_{k|n}$ map
maps real vectors in $\mathcal P_n$ into real vectors.

\begin{lemma}\label{struc-PD-kern}{\rm
The commutation relations (\ref{comm+j-k0j0k1}), i.e.
\begin{equation}\label{comm+j-k0j0k1-pf}
[a^+_{j},a^-_{k}] + [a^0_{j},a^0_{k}] + [a^-_{j},a^+_{k}] = 0
\end{equation}
are equivalent to
\begin{equation}\label{comm+j-k0j0k1c0}
\tilde\Omega_{1}(e_j,e_k)=\tilde\Omega_{1}(e_k,e_j)\in\mathbb R
\end{equation}
\begin{equation}\label{comm+j-k0j0k1c}
\hbox{Im}(\tilde\Omega_{n+1}(e_j,e_k))
=\hbox{Im}(a^+_{k|n-1}(a^+_{j|n-1})^*)
+\hbox{Im}(a^0_{k|n}a^0_{j|n})
\: , \quad \forall n\geq 1,
\end{equation}
for all $j,k\in D$ such that $j<k$ and all $n\in\mathbb N$.
}\end{lemma}

\begin{proof}
For $j,k$ and $n$ as in the statement, the commutation
relation (\ref{comm+j-k0j0k1}) is
$$
[a^+_{j},a^-_{k}] + [a^0_{j},a^0_{k}] + [a^-_{j},a^+_{k}] = 0 \Leftrightarrow
 [a^+_{j}a^-_{k}-a^-_{k}a^+_{j}]
+ [a^0_{j}a^0_{k}-a^0_{k}a^0_{j}] + [a^-_{j}a^+_{k}-a^+_{k}a^-_{j}] = 0
$$
\begin{equation}\label{comm+j-k0j0k1a}
\Leftrightarrow (a^+_{j})^*a^+_{k} - (a^+_{k})^*a^+_{j}
= a^+_{k}a^-_{j}-a^+_{j}a^-_{k}+ a^0_{k}a^0_{j}-a^0_{j}a^0_{k}.
\end{equation}
These are identically satisfied for $j=k$ and, exchanging $j$ and $k$,
one finds an equivalent relation.
Therefore it is sufficient to consider the case $j<k$.\\
On $\mathcal P_0$, (\ref{comm+j-k0j0k1a}) is equivalent to:
$$
(a^+_{j})^*a^+_{k}\Phi_0 - (a^+_{k})^*a^+_{j}\Phi_0 = a^+_{k}a^-_{j}\Phi_0
-a^+_{j}a^-_{k}\Phi_0 + a^0_{k}a^0_{j}\Phi_0-a^0_{j}a^0_{k}\Phi_0
$$
$$
\Leftrightarrow
(a^+_{j})^*a^+_{k}\Phi_0 - (a^+_{k})^*a^+_{j}\Phi_0 = 0.
$$
Recalling (\ref{df-tilde-Omega(n+1)}) the above identity becomes
$$
\tilde\Omega_{1}(e_j,e_k)\Phi_0-\tilde\Omega_{1}(e_k,e_j)\Phi_0
$$
and, since $\tilde\Omega_{1}(e_j,e_k)$ maps $\mathbb C\cdot \Phi_0$ into itself,
the above identity is equivalent (up to obvious identifications) to
$$
\tilde\Omega_{1}(e_j,e_k)=\tilde\Omega_{1}(e_k,e_j) \ \in\mathbb C
$$
and from condition (\ref{aej-pres-PR}) and the identity
$$
\tilde\Omega_{n+1}(e_j,e_k)^*:=(((a^+_{j})^*a^+_{k})_{|n})^*
=((a^+_{k})^*a^+_{j})_{|n}=\tilde\Omega_{n+1}(e_k,e_j)
$$
it follows that $\tilde\Omega_{1}(e_j,e_k)\in\mathbb R$.
This proves (\ref{comm+j-k0j0k1c0}). Let $n>0$.
From
$$
(a^+_{k})^*a^+_{j} = ((a^+_{j})^*a^+_{k})^*
$$
one deduces that for any $\xi_{n},\eta_{n}\in\mathcal P_{n}$
$$
\langle (a^+_{j})^*a^+_{k}\xi_{n},\eta_{n} \rangle_{n}
=\langle \xi_{n},(a^+_{k})^*a^+_{j}\eta_{n} \rangle_{n}
\Leftrightarrow ((a^+_{k})^*a^+_{j})_{|n} = (((a^+_{j})^*a^+_{k})_{|n})^* .
$$
Therefore the identity (\ref{comm+j-k0j0k1a}), restricted to $\mathcal P_n$
is equivalent to the fact that, for each $n\in\mathbb N$ and each $j\in D$,
\begin{equation}\label{comm+j-k0j0k-n}
(a^+_{j|n})^*a^+_{k|n} - (a^+_{k|n})^*a^+_{j|n}
= a^+_{k|n-1}a^-_{j|n}-a^+_{j|n-1}a^-_{k|n}+ a^0_{k|n}a^0_{j|n}-a^0_{j|n}a^0_{k|n}
\end{equation}
or equivalently
\begin{equation}\label{comm+j-k0j0k1b}
\tilde\Omega_{n+1}(e_j,e_k)-\tilde\Omega_{n+1}(e_j,e_k)^*
=\tilde\Omega_{n+1}(e_j,e_k)-\tilde\Omega_{n+1}(e_k,e_j)
=2i\hbox{Im}(\tilde\Omega_{n+1}(e_j,e_k))
\end{equation}
$$
= (a^+_{k}a^-_{j})_{|n}-(a^+_{j}a^-_{k})_{|n}+ (a^0_{k}a^0_{j})_{|n}
-(a^0_{j}a^0_{k})_{|n}
$$
Now notice that for any $\xi_{n},\eta_{n}\in\mathcal P_{n}$
$$
a^+_{k}a^-_{j}\eta_{n}=a^+_{k|n-1}a^-_{j|n}\eta_{n}
=a^+_{k|n-1}(a^+_{j|n-1})^*\eta_{n}
$$
i.e.
$$
(a^+_{k}a^-_{j})_{|n}=a^+_{k|n-1}(a^+_{j|n-1})^*=(a^+_{j|n-1}(a^+_{k|n-1})^*)^* .
$$
Since the $a^0_{j}$ preserve the gradation and are self--adjoint,
$(a^0_{k}a^0_{j})_{|n}=a^0_{k|n}a^0_{j|n}$,
therefore (\ref{comm+j-k0j0k1b}) becomes
\begin{equation}\label{comm+j-k0j0k1e}
2i\hbox{Im}(\tilde\Omega_{n+1}(e_j,e_k))
= (a^+_{k}a^-_{j})_{|n}-(a^+_{j}a^-_{k})_{|n}+ (a^0_{k}a^0_{j})_{|n}
-(a^0_{j}a^0_{k})_{|n}
\end{equation}
$$
=a^+_{k|n-1}(a^+_{j|n-1})^* - a^+_{j|n-1}(a^+_{k|n-1})^*
+a^0_{k|n}a^0_{j|n}-a^0_{j|n}a^0_{k|n}
$$
$$
=a^+_{k|n-1}(a^+_{j|n-1})^* - (a^+_{k|n-1}(a^+_{j|n-1})^*)^*
+a^0_{k|n}a^0_{j|n}-(a^0_{k|n}a^0_{j|n})^*
$$
$$
=2i\hbox{Im}(a^+_{k|n-1}(a^+_{j|n-1})^*)
+2i\hbox{Im}(a^0_{k|n}a^0_{j|n})
$$
and this is equivalent to (\ref{comm+j-k0j0k1c}).
\end{proof}

{\bf Remark}.
Lemma \ref{struc-PD-kern} implies that the commutation relations
(\ref{comm+j-k0j0k1}),
associated to a state on $\mathcal P$, inductively fix the symplectic
parts of the kernels $\tilde\Omega_{n+1}$. Since, adding a symplectic
kernel to any PD kernel, one still obtains a PD kernel, fixing the
imaginary part of a PD kernel leaves its symmetric part completely
arbitrary up to the conditions of positive--definiteness and of
preservation of the real structure.

\begin{lemma}\label{lm-1st-ind-rel-a0j}{\rm
The commutation relations (\ref{comm+j0k1}), i.e.
\begin{equation}\label{comm+j0k1a}
[a^+_{j},a^0_{k}] + [a^0_{j},a^+_{k}] = 0
\end{equation}
are equivalent to
\begin{equation}\label{comm+0-ind}
a^0_{j|n+1}a^+_{k|n}-a^0_{k|n+1}a^+_{j|n}
= a^+_{k|n}a^0_{j|n}-a^+_{j|n}a^0_{k|n}
\end{equation}
for all $j,k\in D$ such that $j<k$ and all $n\in\mathbb N$.
}\end{lemma}

\begin{proof}
The commutation relations (\ref{comm+j0k1a})
are identically satisfied for $j=k$ and, exchanging $j$ and $k$,
one finds an equivalent relation.
Therefore it is sufficient to consider the case $j<k$.
In this case, with arguments similar to those used in the proof of Lemma
(\ref{struc-PD-kern}), one shows that (\ref{comm+j0k1a}) is equivalent to
$$
a^+_{j}a^0_{k}-a^0_{k}a^+_{j} + a^0_{j}a^+_{k}-a^+_{k}a^0_{j} = 0
\Leftrightarrow
$$
$$
\Leftrightarrow
(a^+_{j}a^0_{k})_{|n}-(a^0_{k}a^+_{j})_{|n} + (a^0_{j}a^+_{k})_{|n}
-(a^+_{k}a^0_{j})_{|n} = 0
\: ,\quad \forall n\in\mathbb N
$$
$$
\Leftrightarrow
a^+_{j|n}a^0_{k|n}-a^0_{k|n+1}a^+_{j|n} + a^0_{j|n+1}a^+_{k|n}
-a^+_{k|n}a^0_{j|n} = 0
$$
$$
\Leftrightarrow
a^0_{j|n+1}a^+_{k|n}-a^0_{k|n+1}a^+_{j|n}
 = a^+_{k|n}a^0_{j|n} - a^+_{j|n}a^0_{k|n}
$$
that is (\ref{comm+0-ind}).
\end{proof}

{\bf Remark}.
Since the inductive form of the creators is uniquely determined
by condition (\ref{Symm-Jac-rel-ind}), the identity (\ref{comm+0-ind})
can be interpreted as
a necessary condition to be satisfied by the $a^{0}_{j|n+1}$ once given
the $a^{0}_{j|n}$ ($j\in D$). Notice that the inductive system of
equations (\ref{comm+0-ind}) always admits the zero solution
given by the sequence
$$
a^{0}_{j|n} =0 \: , \qquad \forall j\in D \ , \ \forall n\in\mathbb N .
$$
\begin{lemma}\label{th2-comm-crs}{\rm
The commutation relations (\ref{creat-comm1}) (commutativity of creators) are
equivalent to the following identities
\begin{equation}\label{comm-cr1a}
a^{0}_{k|n+1}a^+_{j|n}- a^{0}_{j|n+1}a^+_{k|n}
=X_{k}a^+_{j|n}-X_{j}a^+_{k|n}
+ 2i\hbox{Im}(a^+_{k|n-1}(a^+_{j|n-1})^*)
+ 2i\hbox{Im}(a^0_{k|n}a^0_{j|n})
\end{equation}
for all $j,k\in D$ such that $j<k$ and all $n\in\mathbb N$.
}\end{lemma}

\begin{proof}
The commutativity of creators is identically satisfied
for $j=k$ and, exchanging $j$ and $k$, one finds the same relation
up to a common sign.
Therefore it is sufficient to consider the case $j<k$.\\
Due to (\ref{Symm-Jac-rel-ind}), the commutativity
of creators is equivalent to
$$
a^+_{j}a^+_{k}=a^+_{k}a^+_{j} \ \iff \
a^+_{j}a^+_{k}P_{n}=a^+_{k}a^+_{j}P_{n} \ \iff \
a^+_{j|n+1}a^+_{k|n}=a^+_{k|n+1}a^+_{j|n} \quad ;\quad
\forall j\in D,  \forall n\in\mathbb N
$$
Using the quantum decomposition of the $X_{j}$ this becomes equivalent to
$$
(X_{j} - a^{0}_{j} - (a^+_{j})^*)a^+_{k|n}
=(X_{k|n+1}-a^{0}_{k|n+1}-(a^+_{k|n})^*)a^+_{j|n}
$$
$$
 \iff
X_{j}a^+_{k|n} - a^{0}_{j|n+1}a^+_{k|n} - (a^+_{j|n})^*a^+_{k|n}
=X_{k}a^+_{j|n}-a^{0}_{k|n+1}a^+_{j|n}-(a^+_{k|n})^*a^+_{j|n}
$$
$$
 \iff
a^{0}_{k|n+1}a^+_{j|n}- a^{0}_{j|n+1}a^+_{k|n}
=X_{k}a^+_{j|n}-X_{j}a^+_{k|n}
+ (a^+_{j|n})^*a^+_{k|n} - (a^+_{k|n})^*a^+_{j|n}
$$
$$
 \iff
a^{0}_{k|n+1}a^+_{j|n}- a^{0}_{j|n+1}a^+_{k|n}
=X_{k}a^+_{j|n}-X_{j}a^+_{k|n}
+ 2i\hbox{Im}(\tilde\Omega_{n+1}(e_{j},e_{k}))
$$
Using (\ref{comm+j-k0j0k1c}) this becomes
$$
a^{0}_{k|n+1}a^+_{j|n}- a^{0}_{j|n+1}a^+_{k|n}
=X_{k}a^+_{j|n}-X_{j}a^+_{k|n}
+ 2i\hbox{Im}(a^+_{k|n-1}(a^+_{j|n-1})^*)
+ 2i\hbox{Im}(a^0_{k|n}a^0_{j|n})
$$
which is equivalent to (\ref{comm-cr1a}).
\end{proof}

\begin{lemma}\label{lm-equivalence}{\rm
The linear system in the unknowns $(a^0_{k|n})$, given by equations
(\ref{comm+0-ind}), (\ref{comm-cr1a}), i.e.
\begin{equation}\label{comm+0-indR1}
a^0_{k|n}a^+_{j|n-1}-a^0_{j|n}a^+_{k|n-1}
= a^+_{j|n-1}a^0_{k|n-1}-a^+_{k|n-1}a^0_{j|n-1}
\end{equation}
$$
a^{0}_{k|n}a^+_{j|n-1}- a^{0}_{j|n}a^+_{k|n-1} =
$$
\begin{equation}\label{comm-cr1aR1}
=X_{k}a^+_{j|n-1}-X_{j}a^+_{k|n-1}
+ 2i\hbox{Im}(a^+_{k|n-2}(a^+_{j|n-2})^*)
+ 2i\hbox{Im}(a^0_{k|n-1}a^0_{j|n-1})
\end{equation}
($j,k\in D$, $j<k$) is equivalent to the single linear system given by
(\ref{comm+0-indR1}).
}\end{lemma}

\begin{proof}
Since the left hand sides of (\ref{comm+0-indR1}) and
(\ref{comm-cr1aR1}) are equal, the same must be true for the right hand sides,
therefore one must have
$$
a^+_{j|n-1}a^0_{k|n-1}-a^+_{k|n-1}a^0_{j|n-1}
$$
\begin{equation}\label{nec-cond-sol-LS}
=X_{k}a^+_{j|n-1}-X_{j}a^+_{k|n-1}
+ 2i\hbox{Im}(a^+_{k|n-2}(a^+_{j|n-2})^*)
+ 2i\hbox{Im}(a^0_{k|n-1}a^0_{j|n-1}).
\end{equation}
Conversely, if (\ref{nec-cond-sol-LS}) holds, then also the right hand sides
of (\ref{comm+0-indR1}) and (\ref{comm-cr1aR1}) are equal, hence the
system (\ref{comm+0-indR1}), (\ref{comm-cr1aR1}) is equivalent to the
single system (\ref{comm+0-indR1}).\\
Now notice that right hand side of (\ref{nec-cond-sol-LS}) is equal to
$$
X_{k}a^+_{j|n-1}-X_{j}a^+_{k|n-1}
+ 2i\hbox{Im}(a^+_{k|n-2}(a^+_{j|n-2})^*)
+ 2i\hbox{Im}(a^0_{k|n-1}a^0_{j|n-1})
$$
$$
=X_{k}(X_{j|n-1} - a^{0}_{j|n-1} - (a^+_{j|n-2})^*)
-X_{j}(X_{k|n-1} - a^{0}_{k|n-1} - (a^+_{k|n-2})^*)
$$
$$
+ 2i\hbox{Im}(a^+_{k|n-2}(a^+_{j|n-2})^*)
+ 2i\hbox{Im}(a^0_{k|n-1}a^0_{j|n-1})
$$
$$
=X_{k}X_{j|n-1} - X_{k}a^{0}_{j|n-1} - X_{k}(a^+_{j|n-2})^*
$$
$$
-X_{j}X_{k|n-1} + X_{j}a^{0}_{k|n-1} + X_{j}(a^+_{k|n-2})^*
$$
$$
+ 2i\hbox{Im}(a^+_{k|n-2}(a^+_{j|n-2})^*)
+ 2i\hbox{Im}(a^0_{k|n-1}a^0_{j|n-1})
$$
$$
=  X_{j}(a^+_{k|n-2})^* - X_{k}(a^+_{j|n-2})^*
+ X_{j}a^{0}_{k|n-1} - X_{k}a^{0}_{j|n-1}
+ 2i\hbox{Im}(a^+_{k|n-2}(a^+_{j|n-2})^*)
+ 2i\hbox{Im}(a^0_{k|n-1}a^0_{j|n-1}).
$$
With similar arguments, the left hand side of (\ref{nec-cond-sol-LS})
is equal to
$$
a^+_{j|n-1}a^0_{k|n-1}-a^+_{k|n-1}a^0_{j|n-1}
$$
$$
=(X_{j|n-1} - a^{0}_{j|n-1} - (a^+_{j|n-2})^*)a^0_{k|n-1}
-(X_{k|n-1} - a^{0}_{k|n-1} - (a^+_{k|n-2})^*)a^0_{j|n-1}
$$
$$
=X_{j|n-1}a^0_{k|n-1} - a^{0}_{j|n-1}a^0_{k|n-1} - (a^+_{j|n-2})^*a^0_{k|n-1}
-X_{k|n-1}a^0_{j|n-1} + a^{0}_{k|n-1}a^0_{j|n-1} + (a^+_{k|n-2})^*a^0_{j|n-1}
$$
$$
=X_{j|n-1}a^0_{k|n-1} -X_{k|n-1}a^0_{j|n-1}
+ a^{0}_{k|n-1}a^0_{j|n-1} - a^{0}_{j|n-1}a^0_{k|n-1}
+ (a^+_{k|n-2})^*a^0_{j|n-1} - (a^+_{j|n-2})^*a^0_{k|n-1}.
$$
Therefore the identity (\ref{nec-cond-sol-LS}) holds iff
$$
X_{j|n-1}a^0_{k|n-1} -X_{k|n-1}a^0_{j|n-1}
+ a^{0}_{k|n-1}a^0_{j|n-1} - a^{0}_{j|n-1}a^0_{k|n-1}
+ (a^+_{k|n-2})^*a^0_{j|n-1} - (a^+_{j|n-2})^*a^0_{k|n-1}
$$
$$
=  X_{j}(a^+_{k|n-2})^* - X_{k}(a^+_{j|n-2})^*
+ X_{j}a^{0}_{k|n-1} - X_{k}a^{0}_{j|n-1}
+ 2i\hbox{Im}(a^+_{k|n-2}(a^+_{j|n-2})^*)
+ 2i\hbox{Im}(a^0_{k|n-1}a^0_{j|n-1})
$$ \eject
$$
\iff
+ 2i\hbox{Im}(a^{0}_{k|n-1}a^0_{j|n-1})
+ (a^+_{k|n-2})^*a^0_{j|n-1} - (a^+_{j|n-2})^*a^0_{k|n-1}
$$
$$
=  X_{j}(a^+_{k|n-2})^* - X_{k}(a^+_{j|n-2})^*
+ 2i\hbox{Im}(a^+_{k|n-2}(a^+_{j|n-2})^*)
+ 2i\hbox{Im}(a^0_{k|n-1}a^0_{j|n-1}).
$$
Thus, using the quantum decomposition, the identity (\ref{nec-cond-sol-LS})
can be re--written in the form
$$
(a^+_{k|n-2})^*a^0_{j|n-1} - (a^+_{j|n-2})^*a^0_{k|n-1}
=  X_{j}(a^+_{k|n-2})^* - X_{k}(a^+_{j|n-2})^*
+ 2i\hbox{Im}(a^+_{k|n-2}(a^+_{j|n-2})^*)
$$
$$
=  X_{j|n-2}(a^+_{k|n-2})^* - X_{k|n-2}(a^+_{j|n-2})^*
+ 2i\hbox{Im}(a^+_{k|n-2}(a^+_{j|n-2})^*)
$$
$$
=(a^+_{j|n-2}+a^0_{j|n-2}+(a^+_{j|n-3})^*)(a^+_{k|n-2})^*
- (a^+_{k|n-2}+a^0_{k|n-2}+(a^+_{k|n-3})^*)(a^+_{j|n-2})^*
+ 2i\hbox{Im}(a^+_{k|n-2}(a^+_{j|n-2})^*)
$$
$$
=a^+_{j|n-2}(a^+_{k|n-2})^*+a^0_{j|n-2}(a^+_{k|n-2})^*
+(a^+_{j|n-3})^*(a^+_{k|n-2})^*
$$
$$
- a^+_{k|n-2}(a^+_{j|n-2})^* - a^0_{k|n-2}(a^+_{j|n-2})^*
- (a^+_{k|n-3})^*(a^+_{j|n-2})^*
+ 2i\hbox{Im}(a^+_{k|n-2}(a^+_{j|n-2})^*)
$$
$$
=a^+_{j|n-2}(a^+_{k|n-2})^*+a^0_{j|n-2}(a^+_{k|n-2})^*
- a^+_{k|n-2}(a^+_{j|n-2})^* - a^0_{k|n-2}(a^+_{j|n-2})^*
+ 2i\hbox{Im}(a^+_{k|n-2}(a^+_{j|n-2})^*)
$$
$$
=a^+_{j|n-2}(a^+_{k|n-2})^* - a^+_{k|n-2}(a^+_{j|n-2})^*
+a^0_{j|n-2}(a^+_{k|n-2})^* - a^0_{k|n-2}(a^+_{j|n-2})^*
+ 2i\hbox{Im}(a^+_{k|n-2}(a^+_{j|n-2})^*)
$$
$$
=2i\hbox{Im}(a^+_{j|n-2}(a^+_{k|n-2})^*)
+a^0_{j|n-2}(a^+_{k|n-2})^* - a^0_{k|n-2}(a^+_{j|n-2})^*
+ 2i\hbox{Im}(a^+_{k|n-2}(a^+_{j|n-2})^*)
$$
$$
= a^0_{j|n-2}(a^+_{k|n-2})^* - a^0_{k|n-2}(a^+_{j|n-2})^*
$$
or equivalently:
\begin{equation}\label{adj-eqR}
(a^+_{k|n-2})^*a^0_{j|n-1} - (a^+_{j|n-2})^*a^0_{k|n-1}
= a^0_{j|n-2}(a^+_{k|n-2})^* - a^0_{k|n-2}(a^+_{j|n-2})^* .
\end{equation}
Taking the adjoint of the identity
$$
(a^+_{k})^*a^0_{j} - (a^+_{j})^*a^0_{k}
= a^0_{j}(a^+_{k})^* - a^0_{k}(a^+_{j})^*
$$
one finds
$$
a^0_{j}a^+_{k} - a^0_{k}a^+_{j}
= a^+_{k}a^0_{j} - a^+_{j}a^0_{k}.
$$
Restricting to $\mathcal P_{n-1}$ one obtains
$$
a^0_{j|n}a^+_{k|n-1} - a^0_{k|n}a^+_{j|n-1}
= a^+_{k|n-1}a^0_{j|n-1} - a^+_{j|n-1}a^0_{k|n-1}
$$
which gives the adjoint of (\ref{adj-eqR}).
Since this is equivalent to (\ref{comm+0-indR1}), we conclude that
the identity (\ref{nec-cond-sol-LS}) holds if and only if (\ref{comm+0-indR1})
holds. This proves the statement.
\end{proof}

\begin{lemma}\label{lm-zero-sol}{\rm
The inductive system of equations (\ref{comm+0-ind}), (\ref{comm-cr1a})
in the unknowns $a^{0}_{j|n}$, always admit
the zero solution, given by the sequence
\begin{equation}\label{sol-a0(j|n)=0}
a^{0}_{j|n} =0 \: , \qquad \forall j\in D \ , \ \forall n\in\mathbb N.
\end{equation}
}\end{lemma}

\begin{proof}
If $a^{0}_{j}=0$, (\ref{comm+0-ind}), i.e. (\ref{comm+0-indR1}) is identically satisfied.
Therefore the result follows from Lemma \ref{lm-equivalence}.
\end{proof}

\section{  The reconstruction theorem}\label{rec-theo}

\subsection{$3$--diagonal decompositions of $\mathcal P$}\label{3-diag-dec-P}

The goal of the present section is to abstract, from a given orthogonal
gradation, a minimal set of characteristics that allow an inductive
reconstruction of this gradation.\\
For two pre--Hilbert spaces $\mathcal H$ and $\mathcal K$,
we denote $\mathcal L_a(\mathcal H,\mathcal K)$ the $*$--algebra
of all adjointable linear operators from $\mathcal H$ to
$\mathcal K$ (see Appendix \ref{App-Orth-proj-PHS}).

Recall that $(e_j)_{j\in D}$ is the canonical basis of $\mathbb{C}^d$ and
that we use the notation
$$
a^\varepsilon_{j|k}:=a^\varepsilon_{e_j|k}
 \: ,\quad j\in D \ , \ \varepsilon\in\{+,0,-\}.
$$

\begin{definition}\label{df-3d-dec}{\rm
For $n\in\mathbb N^*$, a {\bf $3$--diagonal decomposition of}
$\mathcal P_{n]}$ is defined by:\\
(i) a vector space direct sum decomposition of
$\mathcal P_{n]}$
\begin{equation}\label{orth-dec-Pk]}
\mathcal P_{k]}
= \sum^{\centerdot}_{h\in \{0, \cdots, k \}} \mathcal P_{h}
\: , \qquad \forall \  k\in\{0,1,\dots , n\},
\end{equation}
such that each $\mathcal{P}_h$ is monic of order $h$,

\noindent (ii) for each $k\in\{0,1,\dots , n\}$, a pre--scalar
product $\langle \ \cdot \ , \ \cdot \ \rangle_{k} $ on $\mathcal P_{k}$,
such that, denoting $\langle \ \cdot \ , \ \cdot \ \rangle_{n]}$ the unique
scalar product on $\mathcal P_{n]}$ characterized by the conditions that
the vector space decompositions (\ref{df-3d-dec}) are orthogonal
for the restriction of $\langle \ \cdot \ , \ \cdot \ \rangle_{n]}$
on each $\mathcal P_{k]}$:
\begin{equation}\label{orth-dec-Pk]2}
\mathcal P_{k]}
= \bigoplus_{h\in\{0,\cdots,k\}} \mathcal P_{h}
\: , \qquad \forall \, k\in\{0,1,\dots , n\},
\end{equation}
and for all $k\in\{0,1,\dots , n\}$
\begin{equation}\label{restr-scpr-Pn]}
\langle \ \cdot \ , \ \cdot \ \rangle_{n]}\Big|_{\mathcal P_{k}}
=\langle \ \cdot \ , \ \cdot \ \rangle_{k}
\end{equation}
 the restrictions of the operators $X_{e_j}$ on
$\mathcal P_{n-1]}$ are symmetric:
\begin{equation}\label{Omega(u,v|n)}
\langle X_{e_j}\xi , \eta  \rangle_{n]}
=\langle \xi  , X_{e_j}\eta  \rangle_{n]}
\: , \qquad  \xi , \eta\in\mathcal P_{n-1]} \ , \ j\in D,
\end{equation}
(iii) two families of pre--Hilbert space linear maps
(see Appendix \ref{App-Orth-proj-PHS} for the notations)
\begin{equation}\label{a+v|k}
a^+_{e_j|k}\in
\mathcal L_a((\mathcal P_{k},\langle \ \cdot \ , \ \cdot \ \rangle_{k}),
(\mathcal P_{k+1},\langle \ \cdot \ , \ \cdot \ \rangle_{k+1}))
\: ,  \qquad k\in\{0,1,\dots , n-1\},
\end{equation}
\begin{equation}\label{a0v|k}
a^0_{e_j|k}\in
\mathcal L_a((\mathcal P_{k},\langle \ \cdot \ , \ \cdot \ \rangle_{k}),
(\mathcal P_{k},\langle \ \cdot \ , \ \cdot \ \rangle_{k}))
\: ,  \qquad k\in\{0,1,\dots , n-1\},
\end{equation}
$j\in D$, such that:\\

(iii.1) for all $k\in\{1,\dots , n-1\}$ and $j\in D$,
$a^0_{e_j|k}$ is self-adjoint in the pre-Hilbert space sense;\\

(iii.2) the following identity is satisfied:
\begin{equation}\label{df-a+v|k1}
X_{e_j|k} = a^+_{e_j|k} + a^{0}_{e_j|k} + a^-_{e_j|k}
\: , \quad k\in\{0,1,\dots , n-1\} \ , \ j\in D,
\end{equation}
with the convention that $a_{e_j|-1}^+=0 $, and
\begin{equation}\label{df-a-v|k}
a^-_{e_j|k} := (a^+_{e_j|k-1})^*:(\mathcal P_{k},
\langle \ \cdot \ , \ \cdot \ \rangle_{k}) \to (\mathcal P_{k-1},
\langle \ \cdot \ , \ \cdot \ \rangle_{k-1})
\: ,  \qquad k\in\{0,1,\dots , n-1\},
\end{equation}
where $(a^+_{e_j|k-1})^*$ denotes, when no confusion is possible,
the pre--Hilbert space adjoint of $a^+_{e_j|k-1}$.\\

(iii.3) The operators $a^\pm_{e_j|k}$, $a^0_{e_j|k}$ satisfy
the commutation relations (\ref{comm+j-k0j0k1c0}), (\ref{comm+j-k0j0k1c}),
(\ref{comm+0-indR1}).
}\end{definition}

\noindent{\bf Remark}.
\begin{enumerate}
\item[1)] In the following, if no confusion can arise,
we will simply say that
\begin{equation}\label{notat-3diag-dec}
\left\{
\Big(\mathcal P_k \ , \ \langle \ \cdot \ , \ \cdot \ \rangle_{k} \Big)^{n}_{k=0}
\ , \;\; \left(a^{+}_{ \cdot \ |k}\right)^{n-1}_{k=0} \ , \;\;
\left(a^{0}_{ \cdot \ |k}\right)^{n-1}_{k=0}\right\}
\end{equation}
is a $3$--diagonal decomposition of $\mathcal P_{n]}$.
\item[2)]
Note that a priori all the objects defining a $3$--diagonal
decomposition of $\mathcal P_{n]}$ may depend on $n\in\mathbb N$.
\end{enumerate}

\begin{definition}\label{df-3d-ext}{\rm
(i) A $3$--diagonal decomposition of $\mathcal P_{n+1]}$
$$
\left\{\Big(\mathcal P_k(n+1) \ , \
\langle \ \cdot \ , \ \cdot \ \rangle_{n+1,k} \Big)^{n+1}_{k=0} \ , \;\;
\left(a^{+}_{ \ \cdot \ |k}(n+1)\right)^{n}_{k=0} \ , \;\;
\left(a^{0}_{ \ \cdot \ |k}(n+1)\right)^{n}_{k=0}\right\}
$$
is called {\bf an extension of a $3$--diagonal decomposition} of $\mathcal{P}_{n]}$
$$
\left\{\Big(\mathcal P_k(n) \ , \
\langle \ \cdot \ , \ \cdot \ \rangle_{n,k} \Big)^{n}_{k=0} \ , \;\;
\left(a^{+}_{ \ \cdot \ |k}(n)\right)^{n-1}_{k=0} \ , \;\;
\left(a^{0}_{ \ \cdot \ |k}(n)\right)^{n-1}_{k=0}\right\}
$$
if, in obvious notations
$$
\mathcal P_k(n) \ = \ \mathcal P_k(n+1)
\: ,  \qquad \forall \, k\in\{0,\cdots,n\},
$$
$$
\langle \ \cdot \ , \ \cdot \ \rangle_{n+1]}\Big|_{\mathcal P_{n]}}
 \ = \ \langle \ \cdot \ , \ \cdot \ \rangle_{n]}
$$
$$
a^{0}_{ \ \cdot \ |k}(n+1)=a^{0}_{ \ \cdot \ |k}(n)
\: ,  \qquad \forall \, k\in\{0,\cdots,n\},
$$
$$
a^{+}_{ \ \cdot \ |k}(n+1)=a^{+}_{ \ \cdot \ |k}(n)
\: ,  \qquad \forall \, k\in\{0,\cdots,n-1\},
$$

(ii) A {\bf $3$--diagonal decomposition of $\mathcal P$} is a
sequence of $3$--diagonal decompositions
$$
D_n:= \left\{\Big(\mathcal P_k(n) \ , \ \langle \ \cdot \ , \
\cdot \ \rangle_{n,k} \Big)^{n}_{k=0} \ , \;\;
\left(a^{+}_{ \ \cdot \ |k}(n)\right)^{n-1}_{k=0} \ , \;\;
\left(a^{0}_{ \ \cdot \ |k}(n)\right)^{n-1}_{k=0}\right\}
\: ,  \qquad n\in{\mathbb N},
$$
such that, for each $n\in\mathbb N$,
$D_{n+1}$ is an extension of $D_n$. In this case one simply writes
\begin{equation}\label{proj-Dn}
\left\{\Big(\mathcal P_n \ , \ \langle \ \cdot \ , \ \cdot \ \rangle_{n} \Big) \ , \;\;
a^{+}_{ \ \cdot \ |n} \ , \;\; a^{0}_{ \ \cdot \ |n}\right\}_{n\in{\mathbb N}}.
\end{equation}
}\end{definition}

\noindent{\bf Remark}.
Any $3$--diagonal decomposition of $\mathcal P_{n]}$ induces,
by restriction, a $3$--diagonal decomposition of
$\mathcal P_{k]}$ for any $k\leq n$.\\
The following Theorem motivates the introduction of the notion of
$3$--diagonal decomposition given above.

\begin{theorem}\label{3d-dec-P1}{\rm
Every state $\varphi$ on ${\mathcal P}$ uniquely defines a $3$--diagonal
decomposition of ${\mathcal P}$. Conversely, given a $3$--diagonal
decomposition of ${\mathcal P}$, there exists a unique state $\varphi$ on
${\mathcal P}$ such that the $3$--diagonal decomposition of ${\mathcal P}$,
associated to $\varphi$ according to the first part of the theorem, is the
given one.
}\end{theorem}

\begin{proof}
If the pre--scalar product on ${\mathcal P}$ is induced
by a state $\varphi$ on ${\mathcal P}$, then by Lemma
\ref{st-ind-sc-pr} the operators of multiplication by
the coordinates are symmetric for this pre--scalar product
and the quantum decompositions of the random variables $X_j$ $(i\in D)$
constructed in section \ref{sec-symm-jac-rel} provide a
$3$--diagonal decomposition of ${\mathcal P}$. The uniqueness
of the quantum decomposition implies the uniqueness of the corresponding
$3$--diagonal decomposition of ${\mathcal P}$. \\

Conversely, let be given a $3$--diagonal decomposition
of ${\mathcal P}$
and denote
$\langle \ \cdot \ , \ \cdot \ \rangle$ the pre--scalar product induced
by it on $\mathcal P$. Then, by condition (ii) of Definition
\ref{df-3d-dec} and condition (ii) of Definition \ref{df-3d-ext}, for
each $n\in\mathbb N$, the restriction of the operator $X_{e_j}$ ($j\in D$) on
$\mathcal P_{n-1]}$ is symmetric with respect to the restriction of
$\langle \ \cdot \ , \ \cdot \ \rangle$ on $\mathcal P_{n-1]}$. Since
$\bigcup_{k\in{\mathbb N}}\mathcal P_{k]}=\mathcal P$, the operators $X_{e_j}$
are $\langle \ \cdot \ , \ \cdot \ \rangle$--symmetric on $\mathcal P$.\\
Lemma \ref{st-ind-sc-pr} then implies that the pre--scalar
product on ${\mathcal P}$ is induced by some state
$\varphi$ on ${\mathcal P}$ and this concludes the proof.
\end{proof}

\subsection{Structure of $3$--diagonal decompositions of $\mathcal P$}

Having established the equivalence between $3$--diagonal decomposition
of $\mathcal P$ and orthogonal gradations induced by states on $\mathcal P$,
our next goal is to produce a characterization of the
$3$--diagonal decomposition of $\mathcal P$. As a first step towards this
goal in this section we discuss the following problem:\\
{\it given a $3$--diagonal decomposition of $\mathcal P_{n]}$, classify
all its possible extensions in the sense of Definition \ref{df-3d-ext}}.\\

\begin{lemma}\label{class-3d-ext}{\rm
Let, for $n\in\mathbb N^*$,
\begin{equation}\label{3d-decPn]2}
\left\{
\Big(\mathcal P_k \ , \ \langle \ \cdot \ , \ \cdot \ \rangle_{k} \Big)^{n}_{k=0}
\ , \;\; \left(a^{+}_{ \cdot \ |k}\right)^{n-1}_{k=0} \ , \;\;
\left(a^{0}_{ \cdot \ |k}\right)^{n-1}_{k=0}\right\}
\end{equation}
be a $3$--diagonal decomposition of $\mathcal P_{n]}$ (see
(\ref{notat-3diag-dec})). Any $3$--diagonal extension of (\ref{3d-decPn]2})
defines a pair
\begin{equation}\label{pair-ext-3d-dec}
\Big(\tilde\Omega_{n+1}  \ , \ a^0_{ \ \cdot \  |n} \Big)
\end{equation}
with the following properties:
\begin{description}
\item[(i)]
$a^0_{ \ \cdot \  |n}$ is a linear map
\begin{equation}\label{df-a0v|n-a}
a^0_{ \ \cdot \  |n} \ : \ v\in \mathbb C^d \ \longmapsto \
a^0_{v|n} \in\mathcal L_a(\mathcal P_{n} \ , \ \langle \ \cdot \ , \ \cdot \
\rangle_{n})
\end{equation}
such that:\\
-- for all $v\in \mathbb R^d $, $a^0_{v|n}$ is a
self--adjoint operator on the pre-Hilbert space
$(\mathcal P_{n} \ , \ \langle \ \cdot \ , \ \cdot \ \rangle_{n})$;\\

\item[(ii)] For each $n\in\mathbb N$ a
$\mathcal L_a((\mathcal P_{n},\langle \ \cdot  \ , \ \cdot\rangle_{n})$--valued
positive definite kernel on $\mathbb C^d$, denoted $\tilde\Omega_{n}$, mapping
real vectors onto real vectors and such that $\tilde\Omega_{0}\equiv 1$,
$\tilde\Omega_{1}$ is arbitrary and, for $n>1$ the pair
$$
((\tilde\Omega_{n})_{n\in\mathbb N} \ , \ (a^0_{e_j|n})_{j\in D} )
$$
is a solution of the joint system of inductive equations
(\ref{comm+j-k0j0k1c0}), (\ref{comm+j-k0j0k1c}),
(\ref{comm+0-ind}) and (\ref{comm-cr1a})
where the $a^+_{j|n} $ are defined by (\ref{Symm-Jac-rel-ind}) and
the $(a^+_{j|n})^*$ by the right hand side of (\ref{df-adj-au+}).
\end{description}
Conversely
any pair of the form
(\ref{pair-ext-3d-dec}), satisfying conditions (i) and (ii) above,
defines a $3$--diagonal decomposition of $\mathcal P$.
}\end{lemma}

\begin{proof}
Definition \ref{df-3d-dec} implies that any $3$--diagonal
decompositions of $\mathcal P_{n+1]}$ extending the given
one determines a pair (\ref{pair-ext-3d-dec}) with a selfadjoint
operator $a^0_{\ \cdot \ |n}$ and with positive definite kernel
$(\tilde\Omega_n(e_j,e_h))$ defined by
$$
 \tilde\Omega_{n+1}(e_j,e_h) :=a^-_{e_j|n+1}a^+_{e_h|n}
\in\mathcal L_a\left(\mathcal P_{n} \ ,  \
\langle \ \cdot \ , \ , \ \cdot \ \rangle_{n}\right)
\: ,  \qquad j,h\in D \ , \ n\in\mathbb N.
$$
Lemma \ref{struc-PD-kern} implies that the $\tilde\Omega_n$ satisfy
conditions (\ref{comm+j-k0j0k1c0}), (\ref{comm+j-k0j0k1c});
Lemma \ref{lm-1st-ind-rel-a0j} implies that the $a^0_{j|n+1}$ satisfy
condition (\ref{comm+0-ind}); Lemma \ref{th2-comm-crs} implies
that the $a^0_{j|n+1}$ satisfy condition (\ref{comm-cr1a}).
Therefore properties (i) and (ii) above are satisfied.\\

Conversely, given $n\in\mathbb N^*$, the $3$--diagonal decomposition
(\ref{3d-decPn]2}) of $\mathcal P_{n]}$,  and a pair of the form
(\ref{pair-ext-3d-dec}), satisfying conditions (i) and (ii) above,
define for each $j\in D$ the linear maps
\begin{equation}\label{1df-a+v|n1}
a^+_{e_j|n} :\mathcal P_{n} \longrightarrow \mathcal P_{n+1]}
\end{equation}
by the condition
\begin{equation}\label{1df-a+v|n-conv}
a^+_{e_j|n} \
:= \ X_j\Big|_{\mathcal P_{n}} \ - \ a^{0}_{e_j|n} \ - \ (a^+_{e_j|n-1})^*
\end{equation}
and let $\mathcal P_{n+1}$ be the vector space
constructed in Lemma \ref{Pk-pol-dk} with the choices
$$
A^{0}_{e_j|n+1}:=a^{0}_{e_j|n+1} \quad
\hbox{and} \quad A^{-}_{e_j|n+1}:=a^{-}_{e_j|n+1} =(a^+_{e_j|n})^* .
$$
That $\mathcal{P}_{n+1}$ is a monic sub--space of order $n+1$, of $\mathcal{P}_{n+1]}$
follows from Lemma \ref{Pk-pol-dk}. This proves that condition (i)
of Definition \ref{df-3d-dec} is satisfied.\\

Let $\langle \ \cdot  \ ,  \ \cdot\rangle_{n+1}$ be the pre--scalar product
on $\mathcal P_{n+1}$, induced by the positive definite kernel
$(\tilde\Omega_{n+1}(e_j,e_h))$ through the identity:
$$
\sum_{j,h\in D}\langle a^+_{e_j|n}\xi_j,a^+_{e_h|n}\eta_h\rangle_{n+1}
:=\sum_{j,h\in D}\langle \xi_j,\tilde\Omega_{n+1}(e_j,e_h)\eta_h\rangle_{n}
\: , \quad \xi_j,\eta_h\in\mathcal P_{n},
$$
and let $\xi\in\mathcal{P}_{n}$ be a zero norm vector.
Then for each $j\in D$ and $\xi\in\mathcal P_{n}$
$$
\|a^+_{e_j|n}\xi\|^2_{n+1}
=\langle a^+_{e_j|n}\xi ,a^+_{e_jh|n}\xi \rangle_{n+1}
=\langle \xi,\tilde\Omega_{n+1}(e_j,e_h)\xi\rangle_{n}=0.
$$
Thus the operators $a^+_{e_j|n}$ are pre--Hilbert space operators in the sense of
Definition \ref{term-pre-Hilb-sp}.\\
Let us prove that for each $j\in\{1,\cdots ,d\}$,
the restriction on $\mathcal P_{n]}$ of the multiplication operator
by $X_{e_j}$ is symmetric, i.e. that for each
$\xi , \eta\in\mathcal P_{n]}$, one has
\begin{equation}\label{mult-symm1}
\langle X_{e_j}\xi  , \eta  \rangle_{n+1]}
=\langle \xi  , X_{e_j}\eta  \rangle_{n+1]}.
\end{equation}

From (\ref{1df-a+v|n-conv})  we know that
\begin{equation}\label{df-a+|n-ind-2}
a^+_{e_j|n}+a^0_{e_j|n} + (a^+_{e_j|n-1})^* = X_{e_j|n}
\end{equation}
where the restriction is meant in the sense of right multiplication by
the projection onto $\mathcal P_{n}$, so that
both sides are zero outside $\mathcal P_{n}$.
This implies in particular that, for each $k\leq n$
$$
X_{e_j|k}: \mathcal P_{k}\to
\mathcal P_{k+1}\oplus\mathcal P_{k}\oplus\mathcal P_{k-1}.
$$
If both $\xi , \eta\in\mathcal P_{n-1]}$, then the identity (\ref{mult-symm1})
is reduced to the identity
$$
\langle X_{e_j}\xi  , \eta  \rangle_{n]}
=\langle \xi  , X_{e_j}\eta  \rangle_{n]}
$$
which holds because (\ref{class-3d-ext}) is a $3$--diagonal decomposition
of $\mathcal P_{n]}$.\\
Therefore it is sufficient to consider the case in which
$\xi  ,\eta \in\mathcal P_{n}\oplus\mathcal P_{n-1}$.\\
By symmetry the problem is reduced to the two cases:
$$
\eta\in \mathcal P_{n-1}  \qquad \textrm{and} \qquad   \xi\in \mathcal P_{n}
$$
$$
\eta\in \mathcal P_{n}    \qquad \textrm{and} \qquad  \xi\in \mathcal P_{n}
$$
Case 1 : $\eta\in \mathcal P_{n-1}  \ ; \  \xi\in \mathcal P_{n}$.\\
Using the mutual orthogonality of the spaces $\mathcal P_{k}$
for $k\leq n+1$, one finds:
$$
\langle X_{e_j}\xi  , \eta  \rangle_{n+1]}
=\langle \xi  , X_{e_j}\eta  \rangle_{n+1]} \ \Leftrightarrow
$$
$$
\Leftrightarrow
\ \langle (a^+_{e_j|n}+a^0_{e_j|n} + (a^+_{e_j|n-1})^*)\xi  ,
\eta  \rangle_{n+1]}
=\langle \xi,(a^+_{e_j|n-1}+a^0_{e_j|n-1}+(a^+_{e_j|n-2})^*)\eta\rangle_{n+1]}
$$
$$
\Leftrightarrow
\ \langle a^+_{e_j|n}\xi, \eta  \rangle_{n+1]}
+\langle a^0_{e_j|n}\xi , \eta  \rangle_{n+1]}
+ \langle (a^+_{e_j|n-1})^*\xi  , \eta  \rangle_{n+1]}=
$$
$$
=\langle \xi,a^+_{e_j|n-1}\eta\rangle_{n+1]}
+\langle \xi,a^0_{e_j|n-1}\eta\rangle_{n+1]}
+\langle \xi,(a^+_{e_j|n-2})^*\eta\rangle_{n+1]}
$$
$$
\Leftrightarrow
\langle (a^+_{e_j|n-1})^*\xi  , \eta  \rangle_{n-1}
=\langle \xi,a^+_{e_j|n-1}\eta\rangle_{n}
$$
that is identically satisfied because (\ref{class-3d-ext}) is a
$3$--diagonal decomposition of $\mathcal P_{n]}$.\\

Case 2 : $\eta\in \mathcal P_{n}  \ ; \  \xi\in \mathcal P_{n}$
$$
\langle X_{e_j}\xi  , \eta  \rangle_{n+1]}
=\langle \xi  , X_{e_j}\eta  \rangle_{n+1]} \ \Leftrightarrow
$$
$$
\Leftrightarrow \
\langle (a^+_{e_j|n}+a^0_{e_j|n} + (a^+_{e_j|n-1})^*)\xi, \eta\rangle_{n+1]}
=\langle \xi,(a^+_{e_j|n}+a^0_{e_j|n}+(a^+_{e_j|n-1})^*)\eta\rangle_{n+1]}
$$

$$
\Leftrightarrow
\ \langle a^+_{e_j|n}\xi, \eta  \rangle_{n+1]}
+\langle a^0_{e_j|n}\xi , \eta  \rangle_{n+1]}
+ \langle (a^+_{e_j|n-1})^*\xi  , \eta  \rangle_{n+1]}=
$$
$$
=\langle \xi,a^+_{e_j|n}\eta\rangle_{n+1]}
+\langle \xi,a^0_{e_j|n}\eta\rangle_{n+1]}
+\langle \xi,(a^+_{e_j|n-1})^*\eta\rangle_{n+1]}
$$

$$
\Leftrightarrow
\langle a^0_{e_j|n}\xi , \eta  \rangle_{n}
=\langle \xi,a^0_{e_j|n}\eta\rangle_{n}
$$
that is identically satisfied because, by assumption,
$a^0_{e_j|n}$ is self--adjoint for the
$\langle \ \cdot \ , \ \cdot \ \rangle_{n}$--scalar product.
Therefore the restriction on $\mathcal P_{n]}$,
of the multiplication operator by $X_{e_j}$ is symmetric, i.e.
condition (ii) of Definition (\ref{df-3d-dec}) is satisfied.\\

The linear maps $(a^0_{ \ \cdot \  |n+1})$ are self--adjoint
for the pre--scalar product $\langle \ \cdot  \ ,  \ \cdot\rangle_{n+1}$
because of assumption (i). This is equivalent to condition (iii.1) of
Definition (\ref{df-3d-dec}).\\

(\ref{1df-a+v|n-conv}) implies that condition (iii.2) of Definition
(\ref{df-3d-dec}) is satisfied.\\

Finally condition (iii.3) of the same Definition is satisfied because of
Condition (ii).

In conclusion: for any choice of the pair (\ref{pair-ext-3d-dec}),
satisfying conditions (i) and (ii) above, the triple
$$
\left\{ \Big(
\mathcal P_k \ , \ \langle \ \cdot \ , \ \cdot \ \rangle_{k} \Big)^{n+1}_{k=0}
\ , \;\; \left(a^{+}_{ \cdot \ |k}\right)^{n-1}_{k=0} \ , \;\;
\left(a^{0}_{ \cdot \ |k}\right)^{n-1}_{k=0}\right\}
$$
is a $3$--diagonal decomposition of $\mathcal P_{n+1]}$ extending
the given one (\ref{3d-decPn]2}). This concludes the proof.
\end{proof}

\section{ The $d$--dimensional Favard Lemma}\label{multi-dim-Favard-Lm}

We have seen that the $d$--dimensional analogue of the principal
Jacobi sequence $(\omega_{n})$ of a state on $\mathcal P$ is the
sequence of positive definite kernels $(\tilde\Omega_{n})$
and the $d$--dimensional analogue of the secondary
Jacobi sequence $(\alpha_{n})$ is the set of
sequences of self--adjoint operators $(a^0_{j|n})$ ($j\in D$)
(in this section we often use the notation
$a^{\varepsilon}_{j|n}=a^{\varepsilon}_{e_j|n}$ for
$\varepsilon\in \{+,0,-\}$, $j\in D$).
In the $1$--dimensional case, the $(\omega_{n})$ have the only
constraint $\omega_n=0 \: \Longrightarrow  \: \omega_{n+k}=0$,
while the $(\alpha_{n})$ are arbitrary real numbers.
In the $d$--dimensional case we have seen in section \ref{Impl-comm-rels}
that the commutation relations impose constraints both on the
$(\tilde\Omega_{n})$ and on the $(a^0_{j|n})$ ($j\in D$).
Fortunately, when written in inductive form, these constraints,
turn out to be {\bf linear}. In order to obtain the inductive formulation
of the $d$--dimensional extension of Favard Lemma
we introduce the following definition,
that expresses in a precise way the basic idea of these inductive
relations, namely that: given the $a^+_{j|n-1}$ ($j\in D$) and the scalar
product on $\mathcal P_n$ one chooses the $a^0_{j|n}$, compatibly
with the linear constraints and this uniquely defines the $a^+_{j|n}$.
The choice of the $a^+_{j|n}$ uniquely defines the vector space
$\mathcal P_{n+1}$ and, since the imaginary part of the kernel
$(\tilde\Omega_{n+1})$ is uniquely determined by the constraints,
its real part is only subjected to the constraints of positive--definitness
and of mapping real vectors of $\mathcal P_{n}$ into real vectors.

\begin{definition}\label{recurs-3diag-struc}{\rm
Given a linear basis $(e_j)$ of $\mathbb R^d$,
a {\bf recursive $3$--diagonal structure on $\mathcal P$ with respect
to the basis $(e_j)$} is defined by the following procedure. \\
(i) Define the vector sub--space with real structure
$$
\mathcal{P}_{0}:=\mathbb C\cdot \Phi_0
\equiv (\mathbb R\oplus i\mathbb R)\cdot \Phi_0
=:\mathcal{P}_{R,0}\dot + i\mathcal{P}_{R,0}
$$
and the scalar product $\langle \ \cdot \  , \ \cdot \ \rangle_{0}$ on it
uniquely determined by the condition $\|\Phi_0\|:=1$.\\
(ii) For each $j\in D$, choose arbitarily a self--adjoint operator
$$
a^{0}_{j|0}:(\mathcal{P}_{0}, \langle \ \cdot \ , \ \cdot \ \rangle_{0})
\to (\mathcal{P}_{0}, \langle \ \cdot \ , \ \cdot \ \rangle_{0})
$$
i.e. a real number $\tilde a^{0}_{j|0}\in \mathbb R$ characterized by
$a^{0}_{j|0}\Phi_0=:\tilde a^{0}_{j|0}\Phi_0$. \\
(iii) For each $j\in D$, define the linear operator
$a^{+}_{j|0}:\mathcal{P}_{0}\to \mathcal{P}_{1}$ by
$$
a^{+}_{j|0}:=X_j - a^{0}_{j|0}
$$
and the vector spaces
\begin{equation}\label{df-P1R}
\mathcal{P}_{R,1}:=
\mathbb R\hbox{--lin--span of} \
\{a^+_{j|0}\mathcal{P}_{R,0} : j\in D \}
=\mathbb R\hbox{--lin--span of} \ \{X_j-a^{0}_{j|0}\Phi_0 : j\in D \}
\end{equation}
\begin{equation}\label{df-P1}
\mathcal{P}_{1}:=\mathbb C\hbox{--lin--span of} \
\{a^+_{j|0}\mathcal{P}_{R,0} : j\in D \}
=\mathcal{P}_{R,1}+i\mathcal{P}_{R,1}.
\end{equation}

(iv) Choose arbitrarily an
$\mathcal{L}_{a}\left(
(\mathcal{P}_{0}, \langle \ \cdot \ , \ \cdot \ \rangle_{0})\right)$--valued
positive definite kernel $\tilde\Omega_{R,1}$ on
$\mathbb C^d\equiv \mathbb R^d\oplus i\mathbb R^d$ such that, for any
$u,v\in \mathbb R^d$, $\tilde\Omega_{R,1}(u,v)$ maps real vectors of
$\mathcal{P}_{0}$ into real vectors.
Equivalently, choose arbitrarily a pre--scalar product on
$\mathcal{P}_{1}$, real--valued on $\mathcal{P}_{R,1}$.
Define $\tilde\Omega_{1}:=\tilde\Omega_{R,1}$ and the pre--scalar product
$\langle \ \cdot \ , \ \cdot \ \rangle_{1}$ on $\mathcal{P}_{1}$, by
$$
\langle a^{+}_{j|0}\Phi_0,a^{+}_{k|0}\Phi_0\rangle_{1}
:=\langle \Phi_0,\tilde\Omega_{1}(e_{j},e_{k})\Phi_0\rangle_{0}.
$$
(v) Having defined, for $1\le k\le n$, the pre--Hilbert space
$(\mathcal{P}_{k}, \langle \ \cdot \ , \ \cdot \ \rangle_{k})$
with real structure $\mathcal{P}_{k}=\mathcal{P}_{R,k}+i\mathcal{P}_{R,k}$,
the linear operators
$a^+_{j|n-1}:\mathcal P_{n-1}\to \mathcal P_{n}$,
and the self--adjoint operators
$a^0_{j|n-1}: (\mathcal{P}_{R,n-1}, \langle \ \cdot \ , \ \cdot \ \rangle_{n-1})
\to(\mathcal{P}_{R,n-1}, \langle \ \cdot \ , \ \cdot \ \rangle_{n-1})$ ($j\in D$),
choose arbitrarily a self--adjoint solution
$a^0_{j|n}: (\mathcal{P}_{R,n}, \langle \ \cdot \ , \ \cdot \ \rangle_{n})
\to(\mathcal{P}_{R,n}, \langle \ \cdot \ , \ \cdot \ \rangle_{n})$ ($j\in D$)
of the linear system
\begin{equation}\label{comm+0-indR}
a^0_{k|n}a^+_{j|n-1}-a^0_{j|n}a^+_{k|n-1}
= a^+_{j|n-1}a^0_{k|n-1}-a^+_{k|n-1}a^0_{j|n-1}
\end{equation}
for all $j,k\in D$ such that $j<k$ (such solutions exist by Lemma
\ref{lm-zero-sol}).  \\
(vi) Define the linear operator
$$
a^+_{j|n}:=X_{j|n} - a^{0}_{j|n} - (a^+_{j|n-1})^* \ : \
\mathcal P_{n}\to \mathcal P
$$
and the vector spaces
\begin{equation}\label{df-PRn+1}
\mathcal P_{R,n+1}:=\mathbb R\hbox{--lin--span of} \
\{a^+_{j|n}\mathcal{P}_{R,n} \ : \ j\in D \}
\end{equation}
\begin{equation}\label{df-Pn+1}
\mathcal P_{n+1}:=\mathbb C\hbox{--lin--span of} \
\{a^+_{j|n}\mathcal{P}_{R,n} \ : \ j\in D \}
=\mathcal{P}_{R,n+1}+i\mathcal{P}_{R,n+1}.
\end{equation}
(vii) Choose {\bf arbitrarily} an
$\mathcal{L}_{a}\left(
(\mathcal{P}_{n}, \langle \ \cdot \ , \ \cdot \ \rangle_{n})\right)$--valued
positive definite kernel $\tilde\Omega_{R,n+1}$ on
$\mathbb C^d\equiv \mathbb R^d\oplus i\mathbb R^d$ such that, for any
$u,v\in \mathbb R^d$, $\tilde\Omega_{R,n+1}(u,v)$ maps
$\mathcal{P}_{R,n}$ into itself and define $\tilde\Omega_{n+1}(e_j,e_k)$ by:
\begin{equation}\label{comm+j-k0j0k1cR}
\tilde\Omega_{n+1}(e_j,e_k)
:=\tilde\Omega_{R,n+1}(e_j,e_k)
+ \hbox{Im}(a^+_{k|n-1}(a^+_{j|n-1})^*) + \hbox{Im}((a^0_{k|n})a^0_{j|n})
\end{equation}
and the pre--scalar product
$\langle \ \cdot \ , \ \cdot \ \rangle_{n+1}$ on $\mathcal{P}_{n+1}$ by:
$$
\langle a^{+}_{j|0}\xi_n,a^{+}_{k|0}\eta_n\rangle_{n+1}
:=\langle \xi_n,\tilde\Omega_{n+1}(e_{j},e_{k})\eta_n\rangle_{n}
\: , \qquad \xi_n,\eta_n\in \mathcal{P}_{n}.
$$
(viii) Having defined the pre--Hilbert space
$\left(\mathcal P_{n+1},
\langle \ \cdot \ , \ \cdot \ \rangle_{n+1}\right)$ with real structure
$\mathcal{P}_{n+1}=\mathcal{P}_{R,n+1}+i\mathcal{P}_{R,n+1}$, the
$a^+_{j|n}$ and the $a^{0}_{j|n}$ ($j\in D$), one can iterate
the construction of item (v) above.
}\end{definition}

\begin{theorem}\label{3diag-equiv-new-Favrd}{\rm
($d$--dimensional Favard Lemma )
For any linear basis $(e_j)$ of $\mathbb R^d$,
there is a one--to--one correspondence between states
on $\mathcal{P}$ and recursive $3$--diagonal structures
on $\mathcal P$ with respect to the basis $(e_j)$.
}\end{theorem}

{\bf Remark} Since, by adding an arbitrary symplectic kernel to a
positive definite kernel, the result is still a positive definite kernel,
equation (\ref{comm+j-k0j0k1cR}) does not
introduce additional constraints on the $\tilde\Omega_{R,n+1}$.

\begin{proof}
{\bf Necessity}. Let $\varphi$ be a state on $\mathcal P$.
From the results in section \ref{Impl-comm-rels}, it follows that the
$3$--diagonal decomposition of ${\mathcal P}$  associated to the pair
$({\mathcal P} \ , \ \varphi)$ according to Theorem \ref{3d-dec-P1},
defines a recursive $3$--diagonal structure
on $\mathcal P$ with respect to the basis $(e_j)$. \\
{\bf Sufficiency}. Given a recursive $3$--diagonal structure
on $\mathcal P$ with respect to the basis $(e_j)$, denote
\begin{equation}\label{3ddec-ass-R3ddec}
\left\{ \left(\mathcal P_k \ , \ \langle \ \cdot \ ,
 \ \cdot \ \rangle_{k} \right)
\ , \;\; \left(a^{+}_{ \cdot \ |k}\right) \ , \;\;
\left(a^{0}_{ \cdot \ |k}\right)\right\}_{k\in\mathbb N}
\end{equation}
the $3$--diagonal decomposition of $\mathcal P$ associtated to it, i.e.
$$
a^+_{ \ \cdot \ }:=\sum_{n\in\mathbb N}a^+_{ \ \cdot \ |n}
\: , \qquad a^0_{ \ \cdot \ }:=\sum_{n\in\mathbb N}a^0_{ \ \cdot \ |n}
\: , \qquad a^-_{ \ \cdot \ }:=(a^+_{ \ \cdot \ })^* .
$$
Then the commutation relations (\ref{comm+0-ind}) are satisfied
because of (\ref{comm+0-indR}) and Lemma \ref{lm-1st-ind-rel-a0j}.\\
The commutation relations (\ref{comm-cr1a}) are satisfied
because of (\ref{comm-cr1aR1}), Lemma (\ref{lm-equivalence})
and Lemma \ref{th2-comm-crs}.\\
The commutation relations (\ref{comm+j-k0j0k1c}) are satisfied
because of (\ref{comm+j-k0j0k1cR}), Lemma \ref{struc-PD-kern}
and the remark following it.
Since the $3$--diagonal decomposition (\ref{3ddec-ass-R3ddec})
is uniquely defined by the recursive $3$--diagonal structure,
it follows that the same is true for the unique state on $\mathcal P$
defined by it according to Theorem \ref{3d-dec-P1}. This proves the statement.
\end{proof}

\section{Appendix: Orthogonal projectors and adjoints on pre--Hilbert spaces}
\label{App-Orth-proj-PHS}

\begin{definition}\label{term-pre-Hilb-sp}{\rm
We use the following terminology:
\begin{enumerate}
\item [(1)] A pre--scalar product on a vector space $V$ is
a non identically zero positive definite Hermitean form on $V$.
\item [(2)] A scalar product on a vector space $V$ is
a pre--scalar product on $V$ whose only zero--norm vector is $0$.
\item [(3)] A pre--Hilbert space is a vector space equipped
with a pre--scalar product.
\item [(4)] A Hilbert space is a vector space equipped with a
scalar product and complete with respect to the
topology induced by it.
\end{enumerate}
}\end{definition}   

Let $(\mathcal H,\langle \ \cdot \ , \ \cdot  \  \rangle_{\mathcal H})$
and $(\mathcal K,\langle \ \cdot  \ , \ \cdot  \  \rangle_{\mathcal K})$,
be two pre--Hilbert spaces. In the following, when no confusion is possible,
we will omit the label from the two scalar products.\\
$\mathcal L_a(\mathcal H,\mathcal K)$
denotes the space of all adjointable linear operators from $\mathcal H$ to
$\mathcal K$.\\ By definition, $A\in\mathcal L_a(\mathcal H,\mathcal K)$
if and only if:\\
-- $A$ is a linear operator operator everywhere defined on $\mathcal H$;\\
-- $A$ maps zero norm vectors of $\mathcal H$ into zero norm vectors of
$\mathcal K$;\\
-- there exists a linear operator $A^*:\mathcal K\to\mathcal H$ such that
$$
\langle Ah,k\rangle_{\mathcal K} = \langle h,A^*k\rangle_{\mathcal H}
\: , \qquad \forall h\in\mathcal H \ , \ \forall k\in\mathcal K .
$$
In this case $A^*$ is called an adjoint of $A$ and
$A$ is called self--adjoint if $A=A^*$ for some choice of $A^*$.\\
{\bf Remark}. If $A^*$ and $A^+$ are two adjoints of $A$,
then the range of the operator $A^+-A^*$ is contained in the zero--norm
sub--space because
$$
\langle (A^+-A^\ast)k,h\rangle = \langle k ,Ah\rangle - \langle k,Ah\rangle =0
\: , \qquad \forall h\in\mathcal H \ , \ \forall k\in\mathcal K .
$$

\begin{lemma}\label{proj-PHS}{\rm
Let ${\mathcal H}$ be a
pre--Hilbert space and
let $\mathcal K$ be a finite dimensional sub--space of $\mathcal H$.
Denote ${\mathcal K}_0$ the sub--space of the zero--norm vectors in
${\mathcal H}$.\\
Then, for any choice of:\\
-- a linear complement ${\mathcal H}_1$ of ${\mathcal K}_0$ in ${\mathcal H}$,\\
-- a linear complement ${\mathcal K}_1$ of $\mathcal K\cap {\mathcal K}_0$
in ${\mathcal K}$,\\
-- a linear complement ${\mathcal K}_{0,1}$ of
$\mathcal K\cap {\mathcal K}_0$ in ${\mathcal K}_0$, \\
there exists a self--adjoint projection
$P_{\mathcal K}$ from ${\mathcal H}$ onto ${\mathcal K}$.\\
If $\mathcal H_1'$, $\mathcal K_1'$, $\mathcal K_{0,1}'$
are other choices of the above mentioned complements then,
denoting $P_{\mathcal K}'$ the orthogonal projection onto
$\mathcal K$, defined by the first part of the theorem, the range of
$P_{\mathcal K}-P_{\mathcal K}'$ is contained in the zero norm sub--space
of ${\mathcal H}$.\\
If $\mathcal K_1$ has an orthogonal basis $B$ and $\mathcal H$ a linear
basis $C$ such that the scalar products of elements of $B$ with elements of $C$
are real, then the projection $P_{\mathcal K}$ can be chosen so that the real
linear span of $C$ is mapped onto the real linear span of $B$.
}\end{lemma}

\begin{proof}{\rm
The assumptions imply the decompositions
\begin{equation}\label{K=K0+K1}
{\mathcal H}
= ({\mathcal K}_0\cap\mathcal K) \ \oplus \
{\mathcal K}_{0,1} \ \oplus \ {\mathcal H}_1
\: , \qquad {\mathcal K}
= ({\mathcal K}_0\cap\mathcal K) \ \oplus \ {\mathcal K}_1 ,
\end{equation}
that are orthogonal because ${\mathcal K}_0$ is
orthogonal to all vectors. Let $(k_j)_{j\in D_1}$, $D_1$ a finite set,
be a linear basis of ${\mathcal K}_1$.
Since by assumption $\mathcal K_0 \ \cap \ \mathcal K_1=\{0\}$,
the ortho--normalization procedure can be applied to the set
$(k_j)_{j\in D_1}$ leading to an ortho--normal basis $(e_j)_{j\in D_1}$
of ${\mathcal K}_1$.
Any vector $h\in\mathcal H$ can be written in a unique way as
$$
h=h_1+k_0+k_{0,1} \quad \hbox{with} \quad  h_1\in\mathcal H_1
\ , \ k_0\in(\mathcal K_0 \ \cap \ \mathcal K)
\ , \ k_{0,1}\in\mathcal K_{0,1}.
$$
The linear map defined by
\begin{equation}\label{df-PK(h)}
P_{\mathcal K}(h) := \sum_{j\in D_1}\langle e_j, h\rangle e_j + k_0
= \sum_{j\in D_1}\langle e_j, h_1\rangle e_j + k_0
\end{equation}
is clearly a pre--Hilbert space projection from ${\mathcal H}$
onto ${\mathcal K}$ and
$$
\langle P_{\mathcal K}(h),h'\rangle
= \sum_{j\in D_1}\overline{\langle e_j, h_1\rangle} \langle e_j ,h'\rangle + \langle k_{0},h'\rangle
= \sum_{j\in D_1}\langle e_j, h_1\rangle \langle e_j,h'\rangle
=\langle h,P_{\mathcal K}(h')\rangle.
$$
Therefore $P_{\mathcal K}$ is self--adjoint. By inspection from (\ref{df-PK(h)})
it follows that $P_{\mathcal K}$ does not depend on the choice of the
ortho--normal basis $(e_j)$ of ${\mathcal K}_1$.\\
Let $\mathcal H_1'$, $\mathcal K_1'$, $\mathcal K_{0,1}'$ be as
in the statement of theorem.
Then any vector $h\in\mathcal H$ has two decompositions
$$
h=h_1+k_0+k_{0,1}= h_1'+k_0'+k_{0,1}' \: , \quad
h_1\in\mathcal H \ , \ k_1'\in\mathcal K_1' \ , \
k_0,k_0'\in\mathcal K_0 \ , \ k_{0,1},k_{0,1}'\in\mathcal K_{0,1}
$$
hence $h_1$ differs from $h_1'$ by a zero norm vector.
A similar argument shows that, for each $e_j$ in the basis $(e_j)$
of $\mathcal K_1$, there exists $k_{0,j}\in\mathcal K_0\cap \mathcal K$
and $e'_j\in\mathcal K_1'$ such that
$$
e_j:=e'_j + k_{0,j}.
$$
The $e'_j$ are clearly ortho--normal and they are a basis of $\mathcal K_1'$
because it has the same (finite) dimension as $\mathcal K_1$.
Moreover one has
$$
P_{\mathcal K}(h) - k_0 = \sum_{j\in D_1}\langle e_j, h\rangle e_j
=\sum_{j\in D_1}\langle e'_j+k_{0,j}, h_1\rangle (e'_j + k_{0,j})
$$
$$
= \sum_{j\in D_1}\langle e'_j, h\rangle \langle e'_j,h'\rangle e'_j
+ \sum_{j\in D_1}\langle e'_j, h\rangle k_{0,j} + k_{0}'
=P_{\mathcal K}'(h)
+ \left(-k_{0}'+ \sum_{j\in D_1}\langle e'_j, h\rangle k_{0,j}\right)
$$
which shows that the range of $P_{\mathcal K}-P_{\mathcal K}'$
is contained in $\mathcal K_0$.\\
The last statement of the theorem is clear.
}\end{proof}

\begin{definition}\label{df-PHS-proj}{\rm
The projection $\: P_{{\mathcal K}_0}$, defined in Lemma \ref{proj-PHS},
will be called the orthogonal projection onto $\mathcal K_0$ associated
to the decompositions (\ref{K=K0+K1}).
}\end{definition}

\begin{lemma}\label{(adj-prHS)}{\rm
Let $(\mathcal H , \langle \cdot , \cdot \rangle_{\mathcal H} )$,
$(\mathcal K , \langle \cdot , \cdot \rangle_{\mathcal K} )$
be pre--Hilbert spaces,
suppose that $\mathcal H$ is finite dimensional and let
$$
A : (\mathcal H , \langle \cdot , \cdot \rangle_{\mathcal H} ) \rightarrow
(\mathcal K , \langle \cdot , \cdot \rangle_{\mathcal K} )
$$
be a linear operator.
Denote $\mathcal H_{0}$ (resp. $\mathcal K_{0}$) the zero norm
sub--space of $\mathcal H$ (resp. $\mathcal K$).
Suppose that $A$ has the property that $A\mathcal H_{0}\subseteq \mathcal K_{0}$.
Then for any vector space complement $\mathcal H_{1}$ of $\mathcal H_{0}$
there exists an adjoint of $A$.
}\end{lemma}

\begin{proof}
For any $k\in\mathcal K$, the map
$$
h\in\mathcal H \mapsto \langle A h , k \rangle_{\mathcal K}
= \langle k , A h \rangle_{\mathcal K}
$$
is a linear functional on $\mathcal H$, therefore it defines
an element of $\mathcal H^\ast$, the algebraic dual of $\mathcal H$,
denoted $\hat A k$ and characterized by the property
\begin{equation}\label{df-tilde-A}
\hat A k (h) = \langle k , A h \rangle_{\mathcal K},
\end{equation}
By assumption
$$
(\hat A k) (\mathcal H_{0}) =\{0\},
$$
therefore $\hat A k$ induces a linear functional on
$\mathcal H \backslash \mathcal H_{0}$.\\
Let $\mathcal H_1$ be a vector space complement of $\mathcal H_0$ so that
$\mathcal H = \mathcal H_1 \dot + \mathcal H_0$.
Then $\mathcal H_1$ is isomorphic to $\mathcal H \backslash \mathcal H_{0}$
as a linear space and, through this isomorphism, it becomes an Hilbert
space, because $\mathcal H$ is finite dimensional.
Therefore any linear functional $f_1$ on $\mathcal H_1$ is determined by an
element of $h_1\in\mathcal H_1$ through the identity
$$
f_1 (h_2) = \langle h_1 , h_2 \rangle_{\mathcal H_1}
\: , \qquad h_2 \in \mathcal H_1 .
$$
For any $k\in \mathcal K$, define $A^\ast k$ the element of $\mathcal H_1$
corresponding to $\hat A k$ in $\mathcal H_1$. Then
\begin{equation}\label{(df-A*eta-calK)}
\langle A^\ast k , h \rangle = \hat A k (h)
= \langle k , A h \rangle_{\mathcal K}.
\end{equation}
Thus the linear operator $k \in \mathcal K \mapsto A^\ast k\in\mathcal H_1$
is an adjoint of $A$. This proves the statement.
\end{proof}

\begin{definition}{\rm
In the notations and assumptions of Lemma \ref{(adj-prHS)},
the pre-Hilbert space linear operator $A^\ast$ defined in
Lemma \ref{(adj-prHS)} is called the adjoint of $A$ with respect
to decomposition $\mathcal H = \mathcal H_1 \dot + \mathcal H_0$.
}\end{definition}

\section{Appendix: Interacting Fock spaces}\label{App-IFS}

All constructions used in the following, like direct sums and tensor
products, are algebraic.
For any pair of pre--Hilbert spaces $(H,\langle \ \cdot \ , \ \cdot \ \rangle_H)$,
$(K,\langle \ \cdot \ , \ \cdot \ \rangle_K)$, \\
$\mathcal{L}_a((H,\langle \ \cdot \ , \ \cdot \ \rangle_H),
(K,\langle \ \cdot \ , \ \cdot \ \rangle_K))$, or simply when no confusion
is possible $\mathcal{L}_a(H,K)$, denotes the space of all
adjointable pre--Hilbert space maps $A\  \colon \ H\rightarrow K$,
such that there exists a linear map $A^*\  \colon \ K\rightarrow H$ satisfying
$$
\langle f,Ag\rangle_K=\langle A^*f,g\rangle_H \: , \qquad \forall g\in H
 \ , \ \forall f\in K.
$$
If $H=K$, then $\mathcal L_a(K ,\langle \ \cdot \ , \ \cdot \  \rangle_{K})$
has a natural structure of $*$--algebra
and we simply write $\mathcal L_a(K)$.

\begin{definition}\label{df-IFS}{\rm
Let $V$ be a vector space.
An {\bf interacting Fock space on $V$} is a pair:
\begin{equation}\label{df-IFS0}
\left\{ (H_{n}, \langle \ \cdot \ ,\ \cdot \ \rangle_{n})_{n\in\mathbb N}), a^+ \right\}
\end{equation}
such that:\\
-- $\bigl(H_{n}, \langle \ \cdot \ ,\ \cdot \ \rangle_{n}\bigr)_{n\in\mathbb N}$
is a sequence of pre--Hilbert spaces and
$\bigl(H_{0}, \langle \ \cdot \ ,\ \cdot \ \rangle_{0}\bigr)$ is uniquely determined 
by the conditions:
\begin{equation}\label{df-H0}
H_0=:\mathbb C\cdot \Phi_0  \qquad ;  \qquad \|\Phi_{0}\|= 1
\end{equation}
$\Phi_0$ is called the {\bf vacuum or Fock vector};\\
-- denoting $\langle \ \cdot \ ,\ \cdot \ \rangle$ the unique pre--Hilbert
space scalar product on the vector space direct sum of the
family $\bigl(H_n\bigr)_{n\in\mathbb N}$ which makes this direct sum
\begin{equation}\label{df-IFS1}
H:=\bigoplus_{n\in\mathbb N} (H_{n}, \langle \ \cdot \ ,\ \cdot \ \rangle_{n})
\end{equation}
an orthogonal sum, the linear operator
$$
a^+\  \colon \ V\rightarrow
\mathcal{L}_{a}
\left((H_{n}, \langle \ \cdot \ ,\ \cdot \ \rangle_{n})_{n\in\mathbb N}\right)
$$
satisfies the following conditions:
\begin{equation}\label{prop-creat}
H_{n+1}
= \hbox{lin-span} \ \left\{a^+(V)H_n \right\}
\: , \qquad \forall n\in\mathbb N .
\end{equation}
For each $v\in V$, one fixes a choice of adjoint of $a^+(v)$ denoted by
$a^-(v)$ (or simply $a_v$) so that
\begin{equation}\label{Fk-prescr}
a(v)\Phi_0=0 \ \hbox{Fock prescription} \: , \qquad \forall v\in V.
\end{equation}
The operators $a^+(v)$ ($f\in V$) are called {\bf creators} and
their adjoints $a(v)$ -- {\bf annihilators}.
The spaces $\bigl(H_n\bigr)_{n\in\mathbb N}$ are called the {\bf $n$--particle
spaces},
if $n=0$ one speaks of the vacuum space.
If
$$
\left\{ (H_{1,n}, \langle \ \cdot \ ,\ \cdot \ \rangle_{1,n})_{n\in\mathbb N}),
a_1^* \right\}
$$
is another IFS on a vector space $V_1$, a {\bf morphism}
from
$\left\{ (H_{n},
\langle \ \cdot \ ,\ \cdot \ \rangle_{n})_{n\in\mathbb N}),a^+ \right\}$
to
 $\left\{ (H_{1,n}, \langle \ \cdot \ ,\ \cdot \ \rangle_{1,n})_{n\in\mathbb N}),
a_1^* \right\}$ is a linear map $U_1:V\to V_1$ and a
linear isometry
$$
U:\bigoplus_{n\in\mathbb N} (H_{n}, \langle \ \cdot \ ,\ \cdot \ \rangle_{n})
\to \bigoplus_{n\in\mathbb N} (H_{1,n},
\langle \ \cdot \ ,\ \cdot \ \rangle_{1,n})
$$
such that $U$ is gradation preserving and
$$
Ua^+_{v}U^{*}=a^*_{1,U_1v}\: , \qquad \forall v\in V.
$$
The pair $(U_1,U)$ is an {\bf isomorphism} if $U_1$ is invertible
and $U$ is onto up to vectors of norm zero.
}\end{definition}

{\bf Remark}. For any $f\in V$, since the annihilator $a(f)$ is defined
as the adjoint of the creator $a^+(f)$, its action
on $\Phi_0$ is not defined. However, {\bf since the gradation (\ref{df-IFS1})
is $1$--sided}, the only possible way to define it compatibly with the
condition that $a(f)=(a^+(f))^*$, is to define
\begin{equation}\label{df-H-1}
H_{-1}:= \{0\}
\end{equation}
or equivalently to introduce the Fock prescription (\ref{Fk-prescr}).

{\bf Remark}. Recall that, by definition of pre--Hilbert space linear map,
each $a^+(f)$ ($f\in V$) maps zero--norm vectors into zero--norm vectors.
The existence of
a pre--Hilbert space adjoint of $a^+(v)$ with respect to the pre--scalar product
(\ref{df-Omegan-scal-prod-stnd}), which by definition must be defined
on the whole space $H^{\otimes (n+1)}$, is equivalent to the condition
that for any $\xi_{n+1}\in H^{\otimes (n+1)}$ the map
$$
\eta_{n}\in
(H^{\otimes n}, \langle \ \cdot \ , \ \cdot \ \rangle_{n})\mapsto
\langle \xi_{n+1},a^+_v\eta_{n}\rangle_{n+1}
$$
can be extended to a continuous linear functional on the domain of $a^+(v)$,
which by definition is the whole algebraic tensor product $H^{\otimes n}$.
In the case of Hilbert spaces this happens if and only if there are constants
$c_{\xi_{n+1},v}$ such that
\begin{equation}\label{ineq-exst-adj}
\left|\langle \xi_{n+1} , v\otimes \eta_{n}\rangle_{n+1}\right|
\leq c_{\xi_{n+1},v} \left\| \eta_{n}\right \|_{n}
\end{equation}
but in the infinite dimensional case the condition that the whole
algebraic tensor product $H^{\otimes n}$ is in the domain of
the adjoint, is not automatically guaranteed.

\subsection{Example: The full Fock space}\label{Ex-FFS}

The {\bf full Fock space} $\mathcal{F}(V)$ on a pre--Hilbert space
$(V,\langle \ \cdot \ , \ \cdot \ \rangle_V)$ is
obtained by setting $H_n=V^{\otimes n}$ equipped with natural inner product
given by the $n$--fold tensor product:
\begin{equation}\label{df-otimes-tens-prod}
\langle f_n\otimes\cdots\otimes f_1, g_n\otimes\cdots\otimes g_1
\rangle_{\otimes n}
:=\langle f_n,g_n\rangle_V \langle f_{n-1},g_{n-1}\rangle_V \cdots
 \langle f_1,g_1\rangle_V
\end{equation}
$f_n,\dots, f_1, g_n,\dots, g_1\in V$.
Creators on the full Fock space are denoted by $\ell^*(f)$
($f\in V$) and their action on each $H_n$ is defined by setting
\begin{equation}\label{df-ell*(f)}
\ell^*(f)f_n\otimes\ldots\otimes f_1:=f\otimes f_n\otimes\ldots\otimes f_1
\end{equation}
$$
\ell^*_f\Phi_0:=\ell^*(f)\Phi_0=f
$$
The adjoint of $\ell(f)$, with respect to the pre--scalar product
(\ref{df-otimes-tens-prod}), is:
$$
\ell(f)f_n\otimes\ldots\otimes f_1
=\langle f,f_n\rangle f_{n-1}\otimes\ldots\otimes f_1
$$
$$
\ell(f)\Phi_0=0.
$$

\subsection{ The tensor representation of an IFS}\label{tens-repr-IFS}

\begin{lemma}\label{lm-df-IFS2}{\rm
Every IFS
\begin{equation}\label{df-IFS2}
\left\{ (H_{n}, \langle \ \cdot \ ,\ \cdot \ \rangle_{n})_{n\in\mathbb N}),
a^+ \right\}
\end{equation}
on a vector space $V$ is isomorphic, in the sense of Definition \ref{df-IFS},
to an IFS of the form
\begin{equation}\label{tens-repr-IFS-V}
\left\{ \left(V^{\otimes n} \ ,  \
\langle \ \cdot \ ,  \ \cdot \ \rangle_{\otimes, n}\right)
 \ , \ \ell^* \right\}
\end{equation}
where the pre--scalar products
$\langle \ \cdot \ ,  \ \cdot \ \rangle_{\otimes, n}$ are given by
\begin{equation}\label{tens-repr-IFS-V-sc-pr}
\langle u_n\otimes\dots\otimes u_1,v_n\otimes \dots \otimes v_1\rangle_{\otimes,n}
:=\langle a^+(u_n)\cdots a^+(u_1)\Phi_0,
a^+(v_n)\cdots a^+(v_1)\Phi_0\rangle_{n}
\end{equation}
($u_n,v_n, \dots , u_1, v_1\in V$) and the operator $\ell^*$ is defined,
in the notation (\ref{df-ell*(f)}), by
\begin{equation}\label{T-1a*(v)T=a*otimes(v)}
T^{-1}a^+(v)T=\ell^*(v)
\: , \qquad \forall v\in V.
\end{equation}
}\end{lemma}

\begin{proof}
By the universal property of the tensor product, for each $n\in\mathbb N$,
the map
\begin{equation}\label{H(n+1)=HtensHn}
v\otimes h_n\in V\otimes H_n \to a^+(v)h_n\in H_{n+1}
\end{equation}
has a unique linear extension denoted $T_{n,n+1}\ : \ V\otimes H_n \to H_{n+1}$.\\
One easily verifies that the left hand side of (\ref{prop-creat}) is a vector
space.  \\
Iterating the maps (\ref{H(n+1)=HtensHn}), one sees that the
linear extensions of the maps
\begin{equation}\label{tens-iso-gen1}
T_n:v_n\otimes \dots \otimes v_1\in V^{\otimes n} \to
a^+(v_n)\cdots a^+(v_1)\Phi_0\in H_{n}
\end{equation}
($n\in\mathbb N$)
are well defined and define a graded vector space homomorphism
\begin{equation}\label{tens-iso-gen2}
T:=\bigoplus_{n} T_n  \ : \ \hbox{Tens}(V)
=\bigoplus_{n\in\mathbb N} V^{\otimes n} \to \bigoplus_{n\in\mathbb N} H_{n}
\end{equation}
which, by construction, satisfies (\ref{T-1a*(v)T=a*otimes(v)}).\\
Defining the pre--scalar products
$\langle \ \cdot \ ,  \ \cdot \ \rangle_{\otimes, n}$ by
(\ref{tens-repr-IFS-V-sc-pr}), the maps $T_n$ become pre--Hilbert space
unitary isomorphisms, hence $T$ an IFS isomorphism.
This defines the IFS (\ref{tens-repr-IFS-V}).
\end{proof}

\begin{definition}\label{df-tens-repr-IFS-V}{\rm
The isomorphic realization (\ref{tens-repr-IFS-V}), of the IFS
on $V$ given by (\ref{df-IFS0}), is called the
{\bf tensor representation} of the IFS (\ref{df-IFS0}).
}\end{definition}

\subsection{Standard Interacting Fock spaces}\label{Stand-ifs}

\begin{definition}\label{df-stnd-IFS}{\rm
An IFS
$\left\{ (H_{n}, \langle \ \cdot \ ,\ \cdot \ \rangle_{n})_{n\in\mathbb N}),
a^+ \right\}$ {\bf on a pre--Hilbert space}
$(H, \langle \ \cdot \ ,\ \cdot \ \rangle_{H})$ is called {\bf standard}
if, in its tensor representation (\ref{tens-repr-IFS-V}) (with $V=H$),
the pre--scalar products have the form
\begin{equation}\label{df-Omegan-scal-prod-stnd}
\langle \ \cdot \ ,\ \cdot \ \rangle_{\otimes ,n}
=\langle \ \cdot \ ,\Omega_{n} \ \cdot \ \rangle_{H^{\otimes n}}
\end{equation}
where, for $f_j, g_j\in H$ ($j=1,\dots , n$)
\begin{equation}\label{otimes-tens-prodH}
\langle f_{n}\otimes\cdots\otimes f_1,
g_{n}\otimes\cdots\otimes g_1\rangle_{H^{\otimes n}}
:=\langle f_n,g_n\rangle_H \langle f_{n-1},g_{n-1}\rangle_H \cdots
 \langle f_1,g_1\rangle_H
\end{equation}
is the natural scalar product on $H^{\otimes n}$ and
$$
\Omega_{n} \ : \ H^{\otimes n} \  \to \ H^{\otimes n}
$$
is a positive linear operator.
}\end{definition}

\textbf{Remark}. If $H$ is finite dimensional, then every IFS on $H$ is standard.

\subsection{Interacting Fock space and positive definite operator--valued kernels}\label{ifs-PD-op-kern}

The existence of the creation and annihilation operators poses some
restrictions on the sequence of scalar products defining an IFS. To describe
this restrictions we introduce the following definition.

\begin{definition}\label{scal-prod-H(n+1)a}{\rm
Let $S$ be a set and $B$ a $*$--algebra. A map
$\Omega: S \times S \to B$ is called a
{\bf $B$--valued positive definite kernel on $S$} if,
for any finite sub--set $F\subseteq S$ and any map $b:F\to B$, one has
$$
\sum_{s,t\in F} b_s^* \Omega_{s,t} b_t\geq 0
$$
$\Omega$ is called {\bf linear} if $S$ is a vector space and the map
$(s,t)\in S \times S \ \mapsto \ \Omega_{s,t}\in B$ is sesqui--linear. If
$$
B:=\mathcal{L}_{a}((H, \langle \ \cdot \ ,\ \cdot \ \rangle))
$$
is the $*$--algebra of adjointable operators on a pre--Hilbert space
$(H, \langle \ \cdot \ ,\ \cdot \ \rangle)$ we simply speak of a
 {\bf positive definite linear kernel on $S$ based on}
$(H, \langle \ \cdot \ ,\ \cdot \ \rangle)$
}\end{definition}

{\bf Remark}. Any $B$--valued positive definite kernel on $S$ defines
a linear kernel on the free vector space $V_S$ generated by $S$. Conversely, if
$V$ is a vector space a $B$--valued positive definite linear kernel on $V$
is uniquely determined by its values on a Hamel basis of $V$.\\

{\bf Remark}. From now on we restrict our attention to the case of interest
for the present paper, namely that in which all IFS are based on finite
dimensional vector spaces. \\
For a discussion of the general case we refer to the paper
\cite{[AcSk98]} where the notion of
positive definite kernel with values in a $*$--algebra was introduced.

\begin{lemma}\label{PDkern-sc-pr}{\rm
Let be given:\\
-- a finite dimensional pre--Hilbert space
$(\mathcal{K},\langle\cdot ,\cdot\rangle_{\mathcal{K}})$;\\
-- two finite dimensional vector spaces $W,V$;\\
-- an
$\mathcal{L}_a (\mathcal{K} , \langle \cdot , \cdot \rangle_{\mathcal{K}})$--valued
PD Kernel $\tilde{\Omega}$ on $V$; \\
-- a linear map
$a^+ : V \rightarrow \mathcal{L}_a ((\mathcal{K}
, \langle \cdot , \cdot \rangle_{\mathcal{K}}) , W)$ such that
$$
\hbox{lin--span}(a_{V}^+ \mathcal{K}) = W.
$$
Then there exists a unique pre--scalar product
$\langle \cdot , \cdot \rangle_{W}$ on $W$ such that
\begin{equation}\label{df-W-scal-prod}
\langle a_{u}^+ \xi , a_{v}^+ \eta \rangle_W
= \langle \xi , \tilde{\Omega} (u,v) \eta \rangle_{\mathcal{K}} \: ,
\qquad \forall u,v \in V, \xi, \eta \in \mathcal{K}.
\end{equation}
Moreover the adjoint of $a_{u}^+$, denoted
$(a_{u}^+)^* :(W,\langle \cdot , \cdot \rangle_{W})
\to (\mathcal{K},\langle\cdot ,\cdot\rangle_{\mathcal{K}})$ satisfies
\begin{equation}\label{Omega(uv)=(au+)*av+}
\tilde{\Omega} (u , v) =(a_{u}^+ )^*a_{v}^+ .
\end{equation}
In particular, the action of $(a_{u}^+)^\ast$ on $W$ is given, up to
addition of vectors of zero norm, by the identity
\begin{equation}\label{df-adj-au+}
 (a_{u}^+)^\ast \Phi
= \sum_{j \in D} \tilde{\Omega} (u, e_j) \xi_j\: ,
\qquad \Phi=\sum_{j \in D} a_{e_j}^+ \xi_j .
\end{equation}
}\end{lemma}

\begin{proof}
Let ${\mathcal K}_0\subseteq {\mathcal K}$ and ${\mathcal H}_1\subseteq W$ be
sub--spaces as in Lemma \ref{proj-PHS} with ${\mathcal H}$ replaced by $W$.
Let $e \equiv (e_j)_{j \in D}$ be a linear basis of $V$ and
$(P_{\mathcal{K},k})_{k \in D_{\mathcal{K}}}$
an orthonormal basis of $\mathcal{K}$.
The set
$$
\{ a_{e_j}^+P_{\mathcal{K},k} : j \in D , k \in D_{\mathcal{K}} \}
$$
is a system of generators of $W$.
Therefore there exist sets
$$
D_0 \subseteq D\: ,  \qquad D_{0\mathcal{K}} \subseteq D_{\mathcal{K}},
$$
such that the set
\begin{equation}\label{(150423-7)}
\{ a_{e_j}^+ \Phi_k \ : \ j \in D_0 \ , \ k \in D_{0,\mathcal{K}} \}
\end{equation}
is a linerar basis of $W$.
Define $\forall j, j' \in D_0$ , $\forall k, k' \in D_{0,\mathcal{K}}$
\begin{equation}\label{(150423-8)}
\langle a_{e_j}^+ \Phi_k , a_{j'}^+ \Phi_{k'} \rangle_W
:= \langle \Phi_k , \tilde{\Omega} (e_j , e_{j'}) \Phi_{k'} \rangle_{\mathcal{K}}.
\end{equation}
Then there exists a unique pre--scalar product
$\langle \cdot , \cdot \rangle_{W}$ on $W$ such that its restriction on the
linear basis $(\ref{(150423-7)})$ is given by $(\ref{(150423-8)})$.
By sesqui--linearity $\langle \cdot , \cdot \rangle_{W}$ satisfies
(\ref{df-W-scal-prod}).\\
We know from Lemma (\ref{(adj-prHS)}) that the map $a_{u}^+$ is
adjointable and is a pre--Hilbert space operator. Moreover any adjoint
of $a_{n}^+$ satisfies
$$
(a_{n}^+)^\ast \Phi = \sum_{j \in F} (a_{n}^+)^\ast a_{e_j}^+ (\xi_{j})
\: , \qquad \Phi=\sum_{j \in F}a_{e_j}^+ (\xi_{j}).
$$
By definition of $\tilde\Omega$ this implies that, for any
$\Psi,\Phi\in\mathcal K$ and any $u,v\in V$, one has
$$
\langle \Psi,\tilde{\Omega} (u , v)\Phi\rangle_W
=\langle a_{u}^+ \Psi, a_{v}^+ \Phi\rangle_W
=\langle \Psi, (a_{u}^+ )^*a_{v}^+ \Phi\rangle_W .
$$
This implies that the identity (\ref{Omega(uv)=(au+)*av+})
is satisfied up to addition of a zero norm vector.
But we know that any vector $\Phi \in a_{V}^+\mathcal K$ has the form
$\Phi = \sum_{j \in D} a_{e_j}^+ \xi_j$ for some vectors $\xi_j\in\mathcal K$.
Therefore up to addition of a zero norm vector
$$
 (a_{n}^+)^\ast \Phi
= \sum_{j \in D} (a_{u}^+ )^*a_{e_j}^+\xi_j
= \sum_{j \in D} \tilde\Omega (u, e_j) \xi_j
$$
and this proves (\ref{df-adj-au+}).
\end{proof}

{\bf Remark}.
Every IFS on a vector space $V$
\begin{equation}\label{df-IFS3}
\left\{ (H_{n}, \langle \ \cdot \ ,\ \cdot \ \rangle_{n})_{n\in\mathbb N}),
a^+ \right\}
\end{equation}
defines a sequence $(\tilde\Omega_{n})$ with
the follwing properties:
\begin{equation}\label{df-Omega0IFS}
\tilde\Omega_{0}\equiv 1
\end{equation}
is the constant kernel equal to $1$ on the Hilbert space
\begin{equation}\label{df-H0IFS}
(H_{0}, \langle \ \cdot \ ,\ \cdot \ \rangle_{0})
:=(\mathbb C, \langle z ,w \rangle_{0}:=\bar z w  \ (z ,w\in\mathbb C)).
\end{equation}
For $n\in\mathbb N$,
$\tilde\Omega_{n+1}$ is the $\mathcal{L}_{a}((H_{n},
\langle \ \cdot \ ,\ \cdot \ \rangle_{n}))$--valued
{\bf linear kernel} on $V$ defined by
\begin{equation}\label{n-IFS-PD-kern}
\tilde\Omega_{n+1}(u,v):=a(u)a^+(v)\Big |_{H_n}
\in \mathcal{L}_{a}((H_{n}, \langle \ \cdot \ ,\ \cdot \ \rangle_{n}))
\: , \quad u,v\in V.
\end{equation}
Because of (\ref{prop-creat}), the positive definite
kernel $\tilde\Omega_{n+1}$ uniquely determines the
pre--scalar product
$\langle \ \cdot \ ,\ \cdot \ \rangle_{n+1}$ through the identity
\begin{equation}\label{scal-prod-H(n+1)a2}
\langle a^+(u)h_n, a^+(v)h'_n \rangle_{n+1}
=\langle h_n, a(u)a^+(v)h'_n \rangle_{n}
=:\langle h_n, \tilde\Omega_{n+1}(u,v)h'_n \rangle_{n}
\end{equation}
$(u,v\in V \ , \ h_n,h'_n\in H_{n})$.\\

{\bf Remark}. The converse of this statement is most conveniently
formulated using the tensor representation of the IFS (\ref{df-IFS3})
and its proof is based on the following result.

\begin{lemma}\label{scal-prod-VtensHn-a}{\rm
Let $V$ be a finite dimensional vector space.

(i) Any pair of pre--scalar products
$\langle \ \cdot \ ,\ \cdot \ \rangle_{V^{\otimes (n+1)}}$,
$\langle \ \cdot \ ,\ \cdot \ \rangle_{V^{\otimes n}}$,
on $V^{\otimes (n+1)}$, $V^{\otimes n}$ respectively,
defines, through the prescription
\begin{equation}\label{scal-prod-VtensHn-PDK}
\langle u\otimes h_n, v\otimes h'_n \rangle_{V^{\otimes (n+1)}}
=\langle h_n, \tilde\Omega^{\otimes}_{n+1}(u,v)h'_n
\rangle_{V^{\otimes n}}
\end{equation}
($u,v\in V \ , \ h_n,h'_n\in V^{\otimes n}$) an
$\mathcal{L}_{a}((V^{\otimes n},
\langle \ \cdot \ , \ \cdot \ \rangle_{V^{\otimes n}}))$--valued
PD kernel $\tilde\Omega^{\otimes}_{n+1}$ on $V$ such that
\begin{equation}\label{Omega(n+1,u,v)=l(n+1,n,u)ln*(v)}
\tilde\Omega^{\otimes}_{n+1}(u,v) = \ell_{n+1}(u)\ell_n^*(v)
\end{equation}
where $\ell^*(u)$ is the restriction on $H_{n}$ of the operator defined
by (\ref{df-ell*(f)}) and $\ell_{n+1}(u)$ denotes the adjoint of
the pre--Hilbert space linear map
\begin{equation}\label{df-ln*(v)2}
\ell_n^*(u):=\ell^*(u)\Big|_{V^{\otimes n}} \ : \
(V^{\otimes n},\langle \ \cdot \ ,\ \cdot \ \rangle_{V^{\otimes n}}
\ \to \ (V^{\otimes (n+1)},
\langle \ \cdot \ ,\ \cdot \ \rangle_{V^{\otimes (n+1)}}).
\end{equation}
(ii) Conversely, any pair $(\tilde\Omega^{\otimes}_{n+1},
\langle \ \cdot \ ,\ \cdot \ \rangle_{V^{\otimes n}})$,
where $\langle \ \cdot \ ,\ \cdot \ \rangle_{V^{\otimes n}}$
is a pre--scalar product on  $V^{\otimes n}$ and
$\tilde\Omega^{\otimes}_{n+1}$ is a
$\mathcal{L}_{a}((H_{n}, \langle \ \cdot \ ,\ \cdot \ \rangle_{n}))$--valued
PD kernel on $V$ defines, by the prescription
(\ref{scal-prod-VtensHn-PDK}), a pre--scalar product
$\langle \ \cdot \ ,\ \cdot \ \rangle_{V^{\otimes (n+1)}}$
on $V^{\otimes (n+1)}$
satisfying (\ref{Omega(n+1,u,v)=l(n+1,n,u)ln*(v)}).
}\end{lemma}

\begin{proof}
{\bf (i)}. Given
$\langle \ \cdot \ ,\ \cdot \ \rangle_{V^{\otimes (n+1)}}$, for each
$u,v\in V$
the map
\begin{equation}\label{iso-scal-tens-prod}
(h_n,h'_n)\in V^{\otimes n}\times V^{\otimes n} \ \mapsto \
\langle u\otimes h_n, v\otimes h'_n \rangle_{V^{\otimes (n+1)}}
\end{equation}
is sesqui--linear. Since $V^{\otimes n}$ is finite dimensional,
the map (\ref{iso-scal-tens-prod}) defines, for each
$u,v\in V$ a linear map
$$
\tilde\Omega^{\otimes}_{n+1}(u,v) \ : \ H_n \ \to \ H_n
$$
that, by construction, satisfies (\ref{scal-prod-VtensHn-PDK}) and
is adjointable because of finite dimensionality.
Again by finite dimensionality the map (\ref{df-ln*(v)2})
is adjointable and satisfies (\ref{Omega(n+1,u,v)=l(n+1,n,u)ln*(v)}).

{\bf (ii)}. Conversely, given $\tilde\Omega^{\otimes}_{n+1}$, define
$\langle \ \cdot \ ,\ \cdot \ \rangle_{V\otimes H_n}$ by the right hand side of
(\ref{scal-prod-VtensHn-PDK}). By definition of
 $\mathcal{L}_{a}((H_{n}, \langle \ \cdot \ ,\ \cdot \ \rangle_{n}))$--valued
PD kern $\Omega^{\otimes}_{n+1}$ on $V$, this gives a pre--scalar product on
$V\otimes H_n$.
The same argument as in the proof of (i) shows that the map
(\ref{df-ln*(v)2}) is adjointable and
satisfies (\ref{scal-prod-VtensHn-PDK}) and therefore
(\ref{Omega(n+1,u,v)=l(n+1,n,u)ln*(v)}).
This proves (ii).
\end{proof}

\begin{theorem}\label{equiv-IFS-seq-PDkern}{\rm
Let $(H_{n}, \langle \ \cdot \ ,\ \cdot \ \rangle_{n})$ be an IFS
on a finite dimensional vector space $V$ and let
\begin{equation}\label{n-tens-pwr-V}
\left\{ \left(V^{\otimes n} \ ,  \
\langle \ \cdot \ ,  \ \cdot \ \rangle_{\otimes, n}\right)
 \ , \ \ell^* \right\}
\end{equation}
be its tensor representation defined by Lemma \ref{lm-df-IFS2}. Then
the sequence of pre--scalar products
$(\langle \ \cdot \ ,  \ \cdot \ \rangle_{\otimes, n})$ is
uniquely defined by
a sequence  $(\tilde\Omega^{\otimes}_{n})$ with
the follwing properties:
\begin{equation}\label{df-tilde-Omega-otimes0}
\tilde\Omega^{\otimes}_{0}\equiv 1
\end{equation}
is the constant kernel on $V$, identically equal to $1$,
based on the Hilbert space
\begin{equation}\label{df-H0-sc-pr0}
(H_{0}, \langle \ \cdot \ ,\ \cdot \ \rangle_{0})
:=(\mathbb C, \langle z ,w \rangle_{0}:=\bar z w  \ (z ,w\in\mathbb C))
\end{equation}
and $\tilde\Omega^{\otimes}_{n+1}$ is the $\mathcal{L}_{a}((V^{\otimes n},
\langle \ \cdot \ , \ \cdot \ \rangle_{\otimes n}))$--valued PD kernel on $V$
defined by (\ref{Omega(n+1,u,v)=l(n+1,n,u)ln*(v)}).\\

Conversely, let the sequence $(\tilde\Omega^{\otimes}_{n})$ be
inductively defined as follows: $\tilde\Omega^{\otimes}_{0}$ and
$(H_{0}, \langle \ \cdot \ ,\ \cdot \ \rangle_{0})$ are defined
respectively by (\ref{df-tilde-Omega-otimes0}) and (\ref{df-H0-sc-pr0}).\\
Having defined, for $0\leq m\leq n$, the pre--scalar product
$\langle \ \cdot \ ,  \ \cdot \ \rangle_{\otimes, m}$ on $V^{\otimes m}$,
$\tilde\Omega^{\otimes}_{n+1}$ is an arbitrary
$\mathcal L_a(V^{\otimes n},
\langle \ \cdot \ ,  \ \cdot \ \rangle_{\otimes, n})$--valued kernel on $V$. \\
Then, with $\ell^*$ defined by (\ref{df-ell*(f)}),
$((V^{\otimes n},\langle \ \cdot \ ,  \ \cdot \ \rangle_{\otimes, n})_{n\in\mathbb N},\ell^*)$
is an IFS on $V$.
}\end{theorem}

\begin{proof}
Applying the Remark after Definition \ref{scal-prod-H(n+1)a} to the
tensor representation of $(H_{n}, \langle \ \cdot \ ,\ \cdot \ \rangle_{n})$,
one obtains the required sequence $(\tilde\Omega^{\otimes}_{n})$.\\

Conversely, if the sequence $(\tilde\Omega^{\otimes}_{n})$ is
defined as in the second part of the theorem then, according to
Lemma \ref{scal-prod-VtensHn-a}, the pair $(\tilde\Omega^{\otimes}_{n+1},
\langle \ \cdot \ ,  \ \cdot \ \rangle_{\otimes, n})$
defines, by the prescription
(\ref{scal-prod-VtensHn-PDK}), a pre--scalar product
$\langle \ \cdot \ ,\ \cdot \ \rangle_{V^{\otimes (n+1)}}$
on $V^{\otimes (n+1)}$
satisfying (\ref{Omega(n+1,u,v)=l(n+1,n,u)ln*(v)}).
\end{proof}

\subsection{Symmetric interacting Fock spaces}\label{App-S-IFS}

\begin{definition}\label{df-SIFSS}{\rm
An IFS on a vector space $V$ is called {\bf symmetric}, if the
creators commute.
}\end{definition}

The following Lemma shows that, in the tensor representation of a symmetric IFS,
the tensor algebra can be replaced by the symmetric tensor algebra.

\begin{lemma}\label{df-sym-IFS}{\rm
Every symmetric IFS
\begin{equation}\label{df-sym-IFS2}
\left\{ (H_{n}, \langle \ \cdot \ ,\ \cdot \ \rangle_{n})_{n\in\mathbb N}),
a^+ \right\}
\end{equation}
on a vector space $V$ is isomorphic, in the sense of Definition \ref{df-IFS}
to an IFS of the form
\begin{equation}\label{tens-repr-sym-IFS-V}
\left\{ \left(V^{\widehat\otimes  n} \ ,  \
\langle \ \cdot \ ,  \ \cdot \ \rangle_{\widehat\otimes, n}\right)
 \ , \ \hat\ell^* \right\}
\end{equation}
where:\\
-- for all $n\in{\mathbb N}$, $ V^{\widehat\otimes  n}$ denotes the $n$--th
symmetric algebraic tensor power of $V$ and by definition
\begin{equation}\label{df-sym-TP0}
V^{\widehat\otimes  0}:={\mathbb C}\cdot\Phi
\: ,  \qquad \langle\Phi,\Phi\rangle_0=1,
\end{equation}
-- the isomorphism is given by the unique linear extension of the map
\begin{equation}\label{df-hat-T}
\hat T(u_n\widehat\otimes \dots \widehat\otimes u_1)
:=a^+(u_n)\cdots a^+(u_1)\Phi   \: , \qquad n\in{\mathbb N} \ , \ u_j\in V,
\end{equation}
-- the pre--scalar products
$\langle \ \cdot \ ,  \ \cdot \ \rangle_{\widehat\otimes, n}$ are given,
for any $n\in{\mathbb N}$ and $u_1,v_1,\dots ,u_n,v_n\in V$, by
\begin{equation}\label{tens-repr-sym-IFS-V-sc-pr}
\langle u_n\widehat\otimes \dots \widehat\otimes u_1,
v_n\widehat\otimes \dots \widehat\otimes v_1\rangle_{\otimes, n}
:=\langle a^+(u_n)\cdots a^+(u_1)\Phi,
a^+(v_n)\cdots a^+(v_1)\Phi\rangle_{n},
\end{equation}
-- the operator $\hat\ell^*$ defined by
\begin{equation}\label{T-1a*(v)T=hatl*otimes(v)2}
\hat\ell^*(v)(u_n\widehat\otimes \dots \widehat\otimes u_1)
:= v\widehat\otimes u_n\widehat\otimes \dots \widehat\otimes u_1
\: , \qquad \forall v,u_n , \dots , u_1\in V,
\end{equation}
 up to addition of zero--norm vectors satisfies
\begin{equation}\label{T-1a*(v)T=hatl*otimes(v)}
\hat T^{-1}a^+(v)\hat T=\hat\ell^*(v)
\: , \qquad \forall v\in V.
\end{equation}
}\end{lemma}

\begin{proof}
The mutual commutativity of the creators implies that, in the
notations of Lemma \ref{lm-df-IFS2}, the maps $T_n$ ($n\in\mathbb N$) satisfy
\begin{equation}\label{tens-iso-gen3}
T_n(v_n\otimes \dots \otimes v_1)
=T_n(v_n\widehat\otimes \dots \widehat\otimes v_1)
=:\hat T_n(v_n\widehat\otimes \dots \widehat\otimes v_1)
\: , \quad n\in\mathbb N .
\end{equation}
This shows that the graded vector space homomorphism $T$, defined by
(\ref{tens-iso-gen2}) can be restricted to the symmetric tensor
algebra $\hat Tens(V)$ thus defining the graded vector space homomorphism
\begin{equation}\label{sym-tens-iso}
\hat T:=\bigoplus_{n} \hat T_n  \ : \ \hat Tens(V)
=\bigoplus_{n\in\mathbb N} V^{\widehat\otimes n} \to \bigoplus_{n\in\mathbb N} H_{n}
\end{equation}
where $\hat T_n$ is given by (\ref{df-hat-T}).
In this restriction the pre--scalar products defined by
(\ref{tens-repr-IFS-V-sc-pr}) become (\ref{tens-repr-sym-IFS-V-sc-pr}) and
the condition that the spaces $V^{\widehat\otimes n}$ are mutually orthogonal
uniquely defines the pre--scalar product on $\hat Tens(V)$. With this
scalar product $T$ becomes a unitary gradation preserving isomorphism.
Therefore, with $\ell^*_{v_n}$ given by (\ref{T-1a*(v)T=hatl*otimes(v)2}),
the identity (\ref{tens-repr-sym-IFS-V-sc-pr}) can be rewritten in the form
$$
\langle u_n\widehat\otimes \dots \widehat\otimes u_1,
\widehat\ell^*_{v_n}v_{n-1}\widehat\otimes \dots
\widehat\otimes v_1\rangle_{\widehat\otimes, n}
=\langle a^+(u_n)\cdots a^+(u_1)\Phi,
a^+(v_n)a^+(v_{n-1})\cdots a^+(v_1)\Phi\rangle_{n}
$$
$$
=\langle T(u_n\widehat\otimes \dots \widehat\otimes u_1),
T(T^{-1}a^+(v_n)T)v_{n-1}\widehat\otimes \dots
\widehat\otimes v_1\rangle_{n}
$$
$$
=\langle u_n\widehat\otimes \dots \widehat\otimes u_1,
(T^{-1}a^+(v_n)T)v_{n-1}\widehat\otimes \dots
\widehat\otimes v_1\rangle_{\widehat\otimes, n}
$$
Therefore
$$
\widehat\ell^*_{v_n}v_{n-1}\widehat\otimes \dots
\widehat\otimes v_1
-(Ta^+(v_n)T^{-1})Tv_{n-1}\widehat\otimes \dots\widehat\otimes v_1
$$
is a zero--norm vector and this proves (\ref{T-1a*(v)T=hatl*otimes(v)}).\\
The unitarity of $T$ and (\ref{T-1a*(v)T=hatl*otimes(v)})
imply the adjointability of the maps $\ell^*(v)$ ($v\in V$) because
the maps $a^+(v)$ admit pre--Hilbert space adjoints by definition.
Therefore, with this definition $\hat T$ becomes an isomorphism of IFS.
\end{proof}

\begin{theorem}\label{equiv-SIFS-seq-PDkern}{\rm
Every symmetric IFS $(H_{n}, \langle \ \cdot \ ,\ \cdot \ \rangle_{n})$
on a finite dimensional vector space $V$ uniquely defines
a sequence  $(\tilde\Omega^{\widehat\otimes}_{n})$ with
the following properties:
\begin{equation}\label{df-tilde-Omega-otimes0-S}
\tilde\Omega^{\widehat\otimes}_{0}\equiv 1
\end{equation}
is the constant kernel on $V$, identically equal to $1$, on the Hilbert space
\begin{equation}\label{df-H0-sc-pr0-S}
(H_{0}, \langle \ \cdot \ ,\ \cdot \ \rangle_{0})
:=(\mathbb C, \langle z ,w \rangle_{0}:=\bar z w  \ (z ,w\in\mathbb C))
\end{equation}
and $\tilde\Omega^{\widehat\otimes}_{n+1}$ is the
$\mathcal{L}_{a}((V^{\widehat\otimes n},
\langle \ \cdot \ , \ \cdot \ \rangle_{\widehat\otimes n}))$--valued PD kernel on $V$
defined by (\ref{Omega(n+1,u,v)=l(n+1,n,u)ln*(v)})
\begin{equation}\label{scal-prod-VtensHn-PDK-S}
\langle u\widehat\otimes h_n, v\widehat\otimes h'_n
\rangle_{V^{\widehat\otimes (n+1)}}
=\langle h_n, \tilde\Omega^{\widehat\otimes}_{n+1}(u,v)h'_n
\rangle_{V^{\widehat\otimes n}}
\end{equation}
($u,v\in V \ , \ h_n,h'_n\in V^{\widehat\otimes n}$), where
$\langle \ \cdot \ , \ \cdot \ \rangle_{\widehat\otimes n}$ is the pre--scalar
product induced on $V^{\widehat\otimes n}$ by the symmetric tensor
representation of $(H_{n}, \langle \ \cdot \ ,\ \cdot \ \rangle_{n})$.\\

Conversely, let the sequence $(\tilde\Omega^{\widehat\otimes}_{n})$ be
inductively defined as follows: $\tilde\Omega^{\widehat\otimes}_{0}$ and
$(H_{0}, \langle \ \cdot \ ,\ \cdot \ \rangle_{0})$ are defined
respectively by (\ref{df-tilde-Omega-otimes0-S}) and (\ref{df-H0-sc-pr0-S}).
Having defined, for $0\leq m\leq n$, the pre--scalar product
$\langle \ \cdot \ ,  \ \cdot \ \rangle_{\widehat\otimes, m}$ on $V^{\widehat\otimes n}$,
$\tilde\Omega^{\widehat\otimes}_{n+1}$ is an arbitrary
$\mathcal L_a(V^{\widehat\otimes n},\langle \ \cdot \ ,  \
\cdot \ \rangle_{\widehat\otimes m})$--valued kernel on $V$. Then
$((V^{\widehat\otimes n},\langle \ \cdot \ ,  \ \cdot \
\rangle_{\widehat\otimes, n}),\ell^*)$, where $\hat\ell^*$ is defined by
\begin{equation}\label{df-ell*(f)-S}
\ell^*(f)f_n\widehat\otimes\ldots\widehat\otimes f_1
:=f\widehat\otimes f_n\widehat\otimes\ldots\widehat\otimes f_1
\end{equation}
is a symmetric IFS on $V$.
}\end{theorem}

\begin{proof}
The proof is based on the remark that Lemma \ref{scal-prod-VtensHn-a}
and Theorem (\ref{equiv-IFS-seq-PDkern}) continue to hold for symmetric
tensor products and their proofs are just verbal adjustments of those in
the non symmetric case.
\end{proof}

\vspace*{8pt}


\begin{thebibliography}{99}


\bibitem{[AcBo98]}
L. Accardi and M. Bo\.{z}ejko,
{\it Interacting Fock space and Gaussianization of
probability measures},
IDA--QP (Infin. Dim. Anal. Quantum Probab. Rel. Topics)
1 (1998), 663-670.
%
\bibitem{[AcKuoSt04b]}
L. Accardi, H.-H. Kuo and A.I. Stan,
{\it Characterization of probability measures
through the canonically associated interacting Fock spaces},
IDA--QP (Inf. Dim. Anal. Quant. Prob. Rel. Top.) 7 (4) (2004), 485-505
%
\bibitem{[AcKuoSta05]}
L. Accardi, H.-H. Kuo and A. Stan,
{\it Probability measures in terms of creation, annihilation,
and neutral operators},
Quantum Probability and Infinite Dimensional Analysis:
From Foundations to Applications. [QP--PQ XVIII],
M. Sch\"urmann and U. Franz (eds.),
World Scientific (2005), 1-11.
%
\bibitem{[AcKuoSta07]}
L. Accardi, H.-H. Kuo and A. Stan,
{\it Moments and commutators of probability measures},
IDA--QP (Infin. Dim. Anal. Quantum Probab. Rel. Topics)
10 (4) (2007), 591-612.
%
\bibitem{[AcNh02]}
L. Accardi and M. Nhani,
{\it Interacting Fock Spaces and Orthogonal Polynomials in
several variables},
The Crossroad of Non-Commutativity, Infinite-Dimensionality,
Obata, Hora, Matsui (eds.) World Scientific (2002), 192--205.
Preprint Volterra, N. 523 (2002).

\bibitem{alv97} Accardi, L., Lu, Y. G., and
Volovich, I.:
The QED Hilbert module and interacting Fock spaces;
{\it IIAS Reports} No. 1997-008 (1997) International
Institute for Advanced Studies, Kyoto

\bibitem{[AcSk98]}
Accardi, L. and Skeide, M.:
Interacting Fock space versus full Fock module;
Commun. Stoch. Anal. 2 (3) (2008) 423-–444,
Volterra Preprint N. 328 (1998)

\bibitem{[AlpJorgKim14]}
Daniel Alpay, Palle E. T. Jorgensen, David P. Kimsey,
{\it Moment problems in an infinite number of variables}
IDA--QP (Infin. Dim. Anal. Quantum Probab. Rel. Topics), 21 (4) (2015)

\bibitem{Chiha78}
T.S. Chihara,
{\it An Introduction to Orthogonal Polynomials},
Gordon \& Breach, New York (1978).
%
\bibitem{[DuXu01]}
C.F. Dunkl and Y. Xu,
{\it Orthogonal polynomials of several variables},
Cambridge University Press (2001).
%
\bibitem{[ErMagOberTric53]}
A. Erdely, W. Magnus, F. Oberhettinger and F. F. Tricomi,
{\it Higher transcendental functions},
Vol 2 McGraw--Hill (1953).
%
\bibitem{[IsmAsk84]}
M. Ismail and R. Askey  (Eds.),
{\it Recurrence relations, continued fractions and orthogonal
polynomials},
American Mathemaical Society  49  (300) (1984), 1-114.
%
\bibitem{[Koor90]}
T.H. Koornwinder,
{\it Orthogonal polynomials in connection with quantum groups},
Nevai, P.  (ed.) Orthogonal Polynomials, Kluwer Acad.
Publ. (1990), 257-292.
%
\bibitem{[Kowa82a]}
M.A. Kowalski,
{\it The recursion formulas for polynomials in $n$ variables},
SIAM J. Math. Anal. 13 (1982), 309-315.
%
\bibitem{[Kowa82b]} M.A. Kowalski,
{\it Orthogonality and recursion formulas for polynomials
in $n$ variables},
SIAM J. Math. Anal. 13 (1982), 316-323.
%
\bibitem{[KrShe67]}
H.L. Krall and I.M. Sheffer,
{\it Orthogonal polynomials in two variables},
Ann. Mat. Pura Appl. 76 (4) (1967), 325-376.

\bibitem{[Macdon98]}
I.G. Macdonald,
{\it Symmetric Functions and Orthogonal Polynomials},
AMS 1998.
%
\bibitem{[MarcVanAs06]}
F. Marcell\`{a}n and  Van Assche (Editors),
{\it Orthogonal Polynomials and Special Functions},
Springer (2006) W. 3-540-31062-2.
%

\bibitem{Nussb66} A. E. Nussbaum,
{\it Quasi--analytic vectors},
Ark. math. 6 (1966).

\bibitem{st1943} J. A. Shohat and J. D. Tamarkin,
{\it The Problem of Moments},
Mathematical Surveys, Number I (Amer. Math. Soc., 1943).
%
\bibitem{sz1975} M. Szeg\"{o},
{\it Orthogonal Polynomials},
Amer. Math. Soc. Colloq. Publ. vol.23, Providence,
R.I. (1975) 4th ed.
%
\bibitem{[Xu93]} Y. Xu,
{\it Unbounded commuting operators and multivariable
orthogonal polynomials},
Proc. Amer. Math. Soc. 119 (4) (1993), 1223-1231.
%
\bibitem{[Xu97a]} Y. Xu,
{\it On orthogonal polynomials in several variables},
Fields Institute Communications 14 (1997), 247-270.

\bibitem{[Xu04]} Yuan Xu,
{\it Lecture notes on orthogonal polynomials of several variables},
Inzell Lectures on Orthogonal Polynomials,
W. zu Castell, F. Filbir, B. Forster (eds.)
Advances in the Theory of Special Functions and Orthogonal Polynomials,
Nova Science Publishers
Volume 2, 2004, Pages 135--188

\end{thebibliography}
\end{document}